\numberwithin{equation}{section} %%% Equations numbered by section. If you don't want it, please delete it.
\begin{document}

 \PageNum{1}
 \Volume{2025}{Sep.}{x}{x}
 \OnlineTime{August 15, 2025}
 \DOI{0000000000000000}
 \EditorNote{Received February 24, 2025, accepted April 10, 2025\\ 1) Corresponding author}

\abovedisplayskip 6pt plus 2pt minus 2pt \belowdisplayskip 6pt
plus 2pt minus 2pt
%%%%%%%%%%%%%%%%
\def\vsp{\vspace{1mm}}
\def\th#1{\vspace{1mm}\noindent{\bf #1}\quad}
\def\proof{\vspace{1mm}\noindent{\it Proof}\quad}
\def\no{\nonumber}
\newenvironment{prof}[1][Proof]{\noindent\textit{#1}\quad }
{\hfill $\Box$\vspace{0.7mm}}
\def\q{\quad} \def\qq{\qquad}
\allowdisplaybreaks[4]
%%%%%%%%%%%%%%%%%%%%%%%%%%%%%%%%%%%%%%%%%%%%%%%%%%%%%%%%%%%%%%%%%%%%%%%%%%%%%%%%%%%%%%%%%%%%%%%
%%-------------------       Beginning of  Author's Definitions       -------------------%%
%%                     Note: You may add your own definitions here.

%%-------------------         the end of  Author's Definitions           -------------------%%

\AuthorMark{Ni L. and Wallach N. }                             %%%  appear on the head of even pages  %%%

\TitleMark{Compact homogeneous complex manifolds}  %%% Running Title, appear on the head of odd pages  %%%

\title{Compact, connected, complex manifolds that admit a compact transitive group of
holomorphic automorphisms        %%%   Main Title of your paper  %%%
}                  %%%   the Fund which you are supported by  %%%

\author{Lei \uppercase{Ni}$^1$}             %%%  1st Author's information   %%%
    {School of Mathematics, Zhejiang Normal University, Jinhua, Zhejiang, China, 321004;\\
Department of Mathematics, University of California, San Diego, La Jolla, CA
92093, USA\\
    E-mail\,$:$ leni@zjnu.edu.cn; lni.math.ucsd@gmail.com }

\author{Nolan  Wallach}     %%%  2nd Author's info, if exists, or you may delete this part directly  %%%
    {Department of Mathematics, University of California, San Diego, La
Jolla, CA 92093, USA\\
    E-mail\,$:$  ucsd.nwallach@gmail.com; nwallach@ucsd.edu}

\maketitle%

\Abstract{The purpose of this paper is to develop a Lie algebraic approach to obtain new
proofs of important results of H.-C. Wang, Tits and Wolf-Wang-Ziller on
compact complex homogeneous manifolds emphasizing only those that admit a
transitive compact group of biholomorphic transformations. The method only
uses some standard results in Lie theory. The new approach provides a method
of associating a canonical abelian Lie algebra with a given integrable  complex
structure on a compact Lie algebra which extends the earlier work of Samelson
and Pittie.}      % the abstract

\Keywords{Complex homogeneous manifolds,  integrable complex structure on Lie algebras,  canonically associated abelian subalgebras, generalized flag manifolds, maximal compact subgroups, Weyl's theorem, parabolic subalgebras, nil-radicals}        % the keywords

\MRSubClass{53C30 (primary), 53C30 (secondary)}      % MR(2000) Subject Classification

\section{Introduction}

In \cite{Wang}, H. C. Wang gave a classification of compact, simply connected
manifolds that admit complex structures, with a transitive action of a group
of biholomorphisms. In \cite{Tits}, Tits extended the results of Wang dropping
the simple connectedness. In Pittie \cite{Pittie}, the main results involved a
study of how $\bar{\partial}$-cohomology (i.e. the Dolbeault-cohomology) misbehaves on a compact
non-K\"{a}hler complex manifold under the deformation of the complex structures. He gave a parametrization of complex
structures on connected even dimensional compact Lie groups (whose existence
was first observed independently by Wang \cite{Wang} and by Samelson
\cite{Samelson}).

In this paper we extend the results of Wang-Tits by further development of the
methods of Pittie to compact complex manifold homogeneous under a compact
group of holomorphic transformations. The key difference in our approach to
that of Pittie for the case of even dimensional compact Lie groups, is that we
replace his topological argument with the standard Lie theoretic theorem that
the maximal compact subgroups of connected Lie group are conjugate. We
consider, the triples $(\mathfrak{g,\mathfrak{h}},J)$ with $\mathfrak{g}$ a
finite dimensional Lie algebra over $\mathbb{R}$ endowed with an invariant
inner product (that is, the Lie algebra of a compact Lie group),
$\mathfrak{\mathfrak{h}}$ a Lie subalgebra of even codimension and $J$ an
$\mathfrak{h}$--invariant complex structure (cf. Definition \ref{NDef}) on the
real vector space $\mathfrak{g}/\mathfrak{h}$ relative to which a bilinear map
$N:\mathfrak{g}/\mathfrak{h}\times\mathfrak{g}/\mathfrak{h}\rightarrow
\mathfrak{g}/\mathfrak{h}$, defined by $J$ (see also Definition \ref{NDef}),
vanishes. Here, if the corresponding Lie group $G$ is a connected Lie group
and $H$ a closed subgroup, and if $G/H$ has a $G$--invariant almost complex
structure $J$ then our $J$ and $N$ are, respectively the pull back to
$\mathfrak{g}/\mathfrak{h}$ of $J$ and its Nijenhuis tensor of the tangent
space of $G/H$ at the identity coset, which is naturally identified as
$\mathfrak{g}/\mathfrak{h}$). And $J$ is called integrable if $N=0$ (cf.
\cite{Kos}).

Set $N_{\mathfrak{g}}(\mathfrak{h)}$ equal to the normalizer of $\mathfrak{h}
$ in $\mathfrak{g}$ and if $x\in N_{\mathfrak{g}}(\mathfrak{h})$ and let
$\overline{ad}(x)$ be the action of $x$ on $\mathfrak{g}/\mathfrak{h}$ induced
by $ad(x)y=[x,y]$. With $(\mathfrak{g}, \mathfrak{h}, J)$ as above, we first have

\begin{theorem}
\label{thm:main1-wang} (i) If $J$ is an integrable complex structure on
$\mathfrak{g}/\mathfrak{h}$, let $$\mathfrak{m}=\{x\in N_{\mathfrak{g}%
}(\mathfrak{h)}\, |\, [\overline{ad}(x),J]=0\}.$$ Then $\mathfrak{m}$ is the
centralizer of an abelian subalgebra of $\mathfrak{g}$ and $[\mathfrak{m}%
,\mathfrak{m}]=[\mathfrak{h},\mathfrak{h}]$. Additionally, $\mathfrak{h}%
\subset\mathfrak{m}$.

(ii) If there exists an abelian subalgebra, $\mathfrak{t}$, of $\mathfrak{g}$
such that its centralizer, $\mathfrak{m}$, satisfies $[\mathfrak{m}%
,\mathfrak{m}]=[\mathfrak{h},\mathfrak{h}]$ then there exists a complex
structure $J$ on $\mathfrak{g}/\mathfrak{h}$ such that $N=0$.
\end{theorem}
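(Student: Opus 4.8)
\quad
The plan is to complexify. Write $\mathfrak{g}_{\mathbb{C}}=\mathfrak{g}\otimes_{\mathbb{R}}\mathbb{C}$, let $\sigma$ be the conjugation of $\mathfrak{g}_{\mathbb{C}}$ with respect to $\mathfrak{g}$, and put $\bar{x}=\sigma(x)$, $\bar{\mathfrak{a}}=\sigma(\mathfrak{a})$. Extend an $\mathfrak{h}$--invariant almost complex structure $J$ on $\mathfrak{g}/\mathfrak{h}$ complex-linearly to $(\mathfrak{g}/\mathfrak{h})_{\mathbb{C}}=\mathfrak{g}_{\mathbb{C}}/\mathfrak{h}_{\mathbb{C}}$, and let $\mathfrak{q}\subseteq\mathfrak{g}_{\mathbb{C}}$ be the inverse image of the $(-i)$--eigenspace of $J$; then $\mathfrak{h}_{\mathbb{C}}\subseteq\mathfrak{q}$, $\mathfrak{q}+\bar{\mathfrak{q}}=\mathfrak{g}_{\mathbb{C}}$ and $\mathfrak{q}\cap\bar{\mathfrak{q}}=\mathfrak{h}_{\mathbb{C}}$. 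A short computation (the Lie algebraic form of the integrability criterion, cf.\ \cite{Kos}) shows that $N=0$ holds exactly when $\mathfrak{q}$ is a subalgebra, and conversely every such subalgebra $\mathfrak{q}$ determines an $\mathfrak{h}$--invariant $J$ with $N=0$. Moreover, for $x\in\mathfrak{g}$ the two conditions $x\in N_{\mathfrak{g}}(\mathfrak{h})$ and $[\overline{ad}(x),J]=0$ together are equivalent to $[x,\mathfrak{q}]\subseteq\mathfrak{q}$ (the condition $[x,\bar{\mathfrak{q}}]\subseteq\bar{\mathfrak{q}}$ being automatic for real $x$). Hence $\mathfrak{m}=\mathfrak{g}\cap N_{\mathfrak{g}_{\mathbb{C}}}(\mathfrak{q})\cap N_{\mathfrak{g}_{\mathbb{C}}}(\bar{\mathfrak{q}})$, and since $N_{\mathfrak{g}_{\mathbb{C}}}(\mathfrak{q})\cap N_{\mathfrak{g}_{\mathbb{C}}}(\bar{\mathfrak{q}})$ is $\sigma$--stable it equals $\mathfrak{m}_{\mathbb{C}}$; also $\mathfrak{h}\subseteq\mathfrak{m}$, since $\mathfrak{h}\subseteq N_{\mathfrak{g}}(\mathfrak{h})$ and $J$ is $\mathfrak{h}$--invariant by hypothesis.

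I would prove (ii) first, as it is the model for (i). Given $\mathfrak{t}$ abelian with $\mathfrak{m}=\mathfrak{c}_{\mathfrak{g}}(\mathfrak{t})$ and $[\mathfrak{m},\mathfrak{m}]=[\mathfrak{h},\mathfrak{h}]$, the subalgebra $\mathfrak{m}_{\mathbb{C}}$ is a Levi factor of a parabolic subalgebra $\mathfrak{p}=\mathfrak{m}_{\mathbb{C}}\oplus\mathfrak{n}$ of $\mathfrak{g}_{\mathbb{C}}$ (take the parabolic attached to a generic real functional on $i\mathfrak{t}$), with opposite parabolic $\sigma(\mathfrak{p})=\mathfrak{m}_{\mathbb{C}}\oplus\bar{\mathfrak{n}}$, so $\mathfrak{g}_{\mathbb{C}}=\bar{\mathfrak{n}}\oplus\mathfrak{m}_{\mathbb{C}}\oplus\mathfrak{n}$. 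Writing $\mathfrak{a}=\mathfrak{h}\cap\mathfrak{z}(\mathfrak{m})$ one gets $\mathfrak{h}=\mathfrak{a}\oplus[\mathfrak{m},\mathfrak{m}]$; let $\mathfrak{b}$ be the orthogonal complement of $\mathfrak{a}$ in $\mathfrak{z}(\mathfrak{m})$ (with respect to the given invariant inner product), so $\dim\mathfrak{b}=\dim(\mathfrak{g}/\mathfrak{h})-\dim(\mathfrak{g}/\mathfrak{m})$ is even, both codimensions being even, and fix a complex structure on $\mathfrak{b}$ with $\mathfrak{b}_{\mathbb{C}}=\mathfrak{b}^{1,0}\oplus\mathfrak{b}^{0,1}$. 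Then $\mathfrak{q}:=\mathfrak{n}\oplus[\mathfrak{m},\mathfrak{m}]_{\mathbb{C}}\oplus\mathfrak{a}_{\mathbb{C}}\oplus\mathfrak{b}^{0,1}$ is a subalgebra (because $\mathfrak{a}_{\mathbb{C}},\mathfrak{b}^{0,1}\subseteq\mathfrak{z}(\mathfrak{m}_{\mathbb{C}})$ and $\mathfrak{m}_{\mathbb{C}}$ normalizes $\mathfrak{n}$), it satisfies $\mathfrak{q}+\bar{\mathfrak{q}}=\mathfrak{g}_{\mathbb{C}}$ and $\mathfrak{q}\cap\bar{\mathfrak{q}}=[\mathfrak{m},\mathfrak{m}]_{\mathbb{C}}\oplus\mathfrak{a}_{\mathbb{C}}=\mathfrak{h}_{\mathbb{C}}$; by the first paragraph this $\mathfrak{q}$ produces the desired $J$ with $N=0$. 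This proves (ii).

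For (i), the heart is the following \emph{Structure Lemma}: if $\mathfrak{q}\subseteq\mathfrak{g}_{\mathbb{C}}$ is a subalgebra with $\mathfrak{q}+\bar{\mathfrak{q}}=\mathfrak{g}_{\mathbb{C}}$, then $\mathfrak{p}:=N_{\mathfrak{g}_{\mathbb{C}}}(\mathfrak{q})$ is a parabolic subalgebra. I would prove this by induction on $\dim\mathfrak{g}$: one passes to the nilradical $\mathfrak{u}$ of $\mathfrak{q}$, shows $N_{\mathfrak{g}_{\mathbb{C}}}(\mathfrak{u})$ is parabolic with nilradical $\mathfrak{u}$, descends via the positivity lemma below to a $\sigma$--stable Levi factor of it, and applies the inductive hypothesis there; the base case is the assertion that a \emph{reductive} $\mathfrak{q}$ with $\mathfrak{q}+\bar{\mathfrak{q}}=\mathfrak{g}_{\mathbb{C}}$ must be an ideal, which is exactly where the compactness of $\mathfrak{g}$ enters --- this follows, in the spirit of the present paper, from the conjugacy of maximal compact subgroups (equivalently, from the positive definiteness of the Hermitian form $\langle X,Y\rangle=-B(X,\sigma Y)$, where $B$ is the complex-bilinear extension of the invariant inner product on $\mathfrak{g}$; one concludes that the compact complex homogeneous space $G_{\mathbb{C}}/Q$, with $Q$ the connected subgroup having Lie algebra $\mathfrak{q}$, is affine, hence a point modulo the center). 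Granting the Structure Lemma, set $\mathfrak{p}=N_{\mathfrak{g}_{\mathbb{C}}}(\mathfrak{q})$; then $\sigma(\mathfrak{p})=N_{\mathfrak{g}_{\mathbb{C}}}(\bar{\mathfrak{q}})$ is parabolic and $\mathfrak{p}+\sigma(\mathfrak{p})\supseteq\mathfrak{q}+\bar{\mathfrak{q}}=\mathfrak{g}_{\mathbb{C}}$. Now I use the \emph{positivity lemma}: if a parabolic $\mathfrak{p}$ with nilradical $\mathfrak{n}$ satisfies $\mathfrak{p}+\sigma(\mathfrak{p})=\mathfrak{g}_{\mathbb{C}}$, then $\mathfrak{n}\cap\sigma(\mathfrak{p})=0$ --- for if $X\in\mathfrak{n}\cap\sigma(\mathfrak{p})$, then $X\perp\mathfrak{p}$ (since $\mathfrak{n}=\mathfrak{p}^{\perp}$) while $\sigma X\in\mathfrak{p}$, so $B(X,\sigma X)=0$, whence $\langle\sigma X,\sigma X\rangle=-B(\sigma X,X)=0$ and $X=0$. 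Therefore $\mathfrak{m}_{\mathbb{C}}=\mathfrak{p}\cap\sigma(\mathfrak{p})$ is a subalgebra complementary to $\mathfrak{n}$ in $\mathfrak{p}$, i.e.\ a $\sigma$--stable Levi factor; a Levi factor of a parabolic being the centralizer of its own center, $\mathfrak{m}_{\mathbb{C}}=\mathfrak{c}_{\mathfrak{g}_{\mathbb{C}}}(\mathfrak{z}(\mathfrak{m})_{\mathbb{C}})$, and intersecting with $\mathfrak{g}$ gives $\mathfrak{m}=\mathfrak{c}_{\mathfrak{g}}(\mathfrak{z}(\mathfrak{m}))$ with $\mathfrak{z}(\mathfrak{m})$ abelian. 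Finally, $\mathfrak{q}$ is an ideal of $\mathfrak{p}=N_{\mathfrak{g}_{\mathbb{C}}}(\mathfrak{q})$, so its image in $\mathfrak{p}/\mathfrak{n}\cong\mathfrak{m}_{\mathbb{C}}$ is a reductive ideal which, together with its $\sigma$--conjugate, spans $\mathfrak{m}_{\mathbb{C}}$; this forces the image to contain $[\mathfrak{m},\mathfrak{m}]_{\mathbb{C}}$, and a Levi--Malcev conjugacy argument inside $\mathfrak{p}$ (using that $\exp(\mathfrak{n})$ normalizes $\mathfrak{q}$, since $\mathfrak{n}\subseteq\mathfrak{p}=N_{\mathfrak{g}_{\mathbb{C}}}(\mathfrak{q})$) upgrades this to $[\mathfrak{m},\mathfrak{m}]_{\mathbb{C}}\subseteq\mathfrak{q}$, and likewise $[\mathfrak{m},\mathfrak{m}]_{\mathbb{C}}\subseteq\bar{\mathfrak{q}}$, so $[\mathfrak{m},\mathfrak{m}]_{\mathbb{C}}\subseteq\mathfrak{q}\cap\bar{\mathfrak{q}}=\mathfrak{h}_{\mathbb{C}}$; combined with $\mathfrak{h}\subseteq\mathfrak{m}$ this yields $[\mathfrak{m},\mathfrak{m}]=[\mathfrak{h},\mathfrak{h}]$. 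The same analysis shows every $\mathfrak{q}$ arising in (i) has the form written down in (ii), so the two parts are mutually inverse.

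The main obstacle is the Structure Lemma --- that $N_{\mathfrak{g}_{\mathbb{C}}}(\mathfrak{q})$ is parabolic. This is the Lie algebraic counterpart of the Wang--Tits classification, and it is precisely here (and in the base case of its induction) that the compactness of $\mathfrak{g}$, via the conjugacy of maximal compact subgroups, is indispensable; all the remaining steps are bookkeeping with parabolic subalgebras, Levi decompositions, and the $(1,0)/(0,1)$ splitting.
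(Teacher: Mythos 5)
Your overall architecture is sound and in several places parallels the paper: the equivalence ``$N=0$ iff $\mathfrak{q}=p^{-1}((\mathfrak{g}/\mathfrak{h})_{\mp})$ is a subalgebra'' is Lemma \ref{lmm:12}, the identification of $\mathfrak{m}$ with $\mathfrak{g}\cap N_{\mathfrak{g}_{\mathbb{C}}}(\mathfrak{q})$ is Corollary \ref{coro:12}, the Langlands/Levi bookkeeping ($\mathfrak{m}_{\mathbb{C}}=\mathfrak{p}\cap\tau(\mathfrak{p})$ is the centralizer of its center) is exactly what the paper invokes, and your construction in (ii) (nilradical of a parabolic attached to a generic element of $\mathbf{i}\mathfrak{t}$, plus a complex structure on the central torus direction) is Lemma \ref{lmm:21} and Proposition \ref{prop:21}. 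But there is a genuine gap at the one place where all the difficulty is concentrated: your \emph{Structure Lemma}. Its inductive step rests on the assertion that $N_{\mathfrak{g}_{\mathbb{C}}}(\mathfrak{u})$ is parabolic \emph{with nilradical $\mathfrak{u}$}, where $\mathfrak{u}$ is the nilradical of $\mathfrak{q}$. That assertion is not proved, and it is not a routine fact: for a general nilpotent subalgebra it is false (e.g.\ for $\mathfrak{u}=\mathbb{C}E_{13}\subset\mathfrak{sl}_3$ the normalizer is the Borel, whose nilradical strictly contains $\mathfrak{u}$), so any proof must exploit $\mathfrak{q}+\bar{\mathfrak{q}}=\mathfrak{g}_{\mathbb{C}}$ in an essential way --- which is to say it is essentially as hard as the lemma itself. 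The base case is also only gestured at (Matsushima-type affineness, or an unspecified maximal-compact argument), and even granting both steps you would still need to relate $N_{\mathfrak{g}_{\mathbb{C}}}(\mathfrak{q})$ to the normalizer computed in the Levi quotient when lifting back. As written, the heart of part (i) is asserted rather than established.

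The paper takes a different and more concrete route to the same conclusion, and it is worth seeing why it closes the gap. It first proves $G_{\mathbb{C}}=\bar{L}G$ by a dimension count plus open--closed (Proposition \ref{prop:11}), then uses conjugacy of maximal compact subgroups to show every compact subalgebra of $\mathfrak{l}=p^{-1}((\mathfrak{g}/\mathfrak{h})_+)$ is $\mathrm{Ad}(\bar L)$-conjugate into $\mathfrak{h}$ (Proposition \ref{prop:12}); this pins down the Levi decomposition $\mathfrak{l}=\mathfrak{r}\oplus\mathfrak{k}_{\mathbb{C}}$ with $\mathfrak{k}=[\mathfrak{h},\mathfrak{h}]$ (Corollary \ref{coro:11}), after which a Borel subalgebra $\mathfrak{b}\supset\mathfrak{b}_3\oplus\mathfrak{r}$ is exhibited inside $\mathfrak{a}_{\mathbb{C}}+\mathfrak{l}$ by an explicit dimension count (Proposition \ref{prop:13}), so $\mathfrak{p}=\mathfrak{a}_{\mathbb{C}}+\mathfrak{l}$ is parabolic and is the normalizer of $\mathfrak{l}$ because $\mathfrak{l}$ contains the nilradical of $\mathfrak{p}$ as its own nilradical. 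If you want to salvage your induction you would need to supply a proof of the ``normalizer of the nilradical is parabolic'' step (a Morozov/Borel--Tits type argument), but the paper's direct construction is shorter and stays elementary. Two smaller points: in (ii) your decomposition $\mathfrak{h}=\mathfrak{a}\oplus[\mathfrak{m},\mathfrak{m}]$ with $\mathfrak{a}=\mathfrak{h}\cap\mathfrak{z}(\mathfrak{m})$ presupposes $\mathfrak{h}\subset\mathfrak{m}$, which is not part of the hypothesis --- the paper needs Lemma \ref{lemma:HK} to replace $\mathfrak{t}$ by a $\mathfrak{t}'$ whose centralizer contains $\mathfrak{h}$; and your final ``Levi--Malcev conjugacy inside $\mathfrak{p}$'' step to get $[\mathfrak{m},\mathfrak{m}]_{\mathbb{C}}\subset\mathfrak{q}$ (rather than $\mathfrak{q}+\mathfrak{n}$) is plausible but would need to be written out, since the conjugating element must be shown to preserve $\mathfrak{q}$.
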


Notice that $\mathfrak{m}\supset\mathfrak{h}$ in part (i). The above theorem
is a Lie algebraic extension of the main Theorem I and Theorem II of
\cite{Wang} (see also Theorem 4.5 of Chapter 4, Theorem 3.2 of Chapter 5 in
\cite{Mura}). In \cite{Wang}, the problem was reduced under the condition of
simple connectedness to the case that there is a compact semi-simple
transitive group of holomorphic automorphisms via a result of Montgomery
\cite{Monty} and a lemma, i.e. (2.2) of \cite{Wang}. Hence the above result
applies immediately to Wang's C-spaces. Besides the additional information provided in the
theorem, our proof is without appeal to any algebraic geometry nor do we
assume that $\mathfrak{g}$ is semisimple. Compared with the exposition of
Wang's result in \cite{Mura} our construction of $\mathfrak{m}$ in part (i) is
different from, and more direct than, that in Theorem 4.5 of Chapter 4 of
\cite{Mura}. The proof of part (ii) is more direct as well. The new ingredient
of Theorem \ref{thm:main1-wang} contains a canonical way of associating the
Lie subalgebra $\mathfrak{m}$ with $J$. The second result is related to that
of Tits.

\begin{theorem}
\label{thm:main2-tits} Let $p:\mathfrak{g}\rightarrow\mathfrak{\mathfrak{g}%
}/\mathfrak{h}$ denote natural surjection and its extension to $\mathfrak{g}%
_{\mathbb{C}}$. Let $J$ also denote the extension of $J$ to $\left(
\mathfrak{\mathfrak{g}}/\mathfrak{h}\right)  _{\mathbb{C}}%
=\mathfrak{\mathfrak{g}}_{\mathbb{C}}/\mathfrak{h}_{\mathbb{C}}$. If
$(\mathfrak{g}/\mathfrak{h)}_{+}$ is the $\mathbf{{i}}$ eigenspace for $J$ in
$\mathfrak{g}_{\mathbb{C}}/\mathfrak{h}_{\mathbb{C}}$ then the normalizer of $p^{-1}((\mathfrak{g}%
/\mathfrak{h)}_{+})$ is a parabolic subalgebra, $\mathfrak{p}$, of
$\mathfrak{g}_{\mathbb{C}}$, which as a real vector space is equal to
$\mathfrak{u}\oplus p^{-1}((\mathfrak{g}/\mathfrak{h)}_{+})$ with the
subalgebra $\mathfrak{u}$ being any subalgebras that satisfies $\mathfrak{u}%
\oplus\mathfrak{\mathfrak{h}}=\mathfrak{m}$. Here $\mathfrak{m}$ is as in
Theorem \ref{thm:main1-wang}. Also $\mathfrak{p}\cap\mathfrak{\mathfrak{g}%
}=\mathfrak{m}$.
\end{theorem}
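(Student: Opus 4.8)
The plan is to study the complex subspace $\mathfrak{q}:=p^{-1}((\mathfrak{g}/\mathfrak{h})_{+})$ of $\mathfrak{g}_{\mathbb{C}}$ and to identify its normalizer $\mathfrak{p}=N_{\mathfrak{g}_{\mathbb{C}}}(\mathfrak{q})$ directly. Write $\overline{(\cdot)}$ for the conjugation of $\mathfrak{g}_{\mathbb{C}}$ relative to the real form $\mathfrak{g}$. Since $J$ is defined over $\mathbb{R}$ it commutes with this conjugation on $(\mathfrak{g}/\mathfrak{h})_{\mathbb{C}}=\mathfrak{g}_{\mathbb{C}}/\mathfrak{h}_{\mathbb{C}}$, and conjugation being antilinear it interchanges the $\pm\mathbf{i}$ eigenspaces of $J$; hence $\overline{\mathfrak{q}}=p^{-1}((\mathfrak{g}/\mathfrak{h})_{-})$, and therefore $\mathfrak{q}\cap\overline{\mathfrak{q}}=\mathfrak{h}_{\mathbb{C}}$, $\mathfrak{q}+\overline{\mathfrak{q}}=\mathfrak{g}_{\mathbb{C}}$, and $\mathfrak{q}\cap\mathfrak{g}=\mathfrak{h}$. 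Because $N=0$, $\mathfrak{q}$ is a complex subalgebra of $\mathfrak{g}_{\mathbb{C}}$ (the Lie-algebraic form of integrability), so in particular $\mathfrak{q}\subseteq\mathfrak{p}$; and $\mathfrak{p}$, being a normalizer, is a complex subalgebra of $\mathfrak{g}_{\mathbb{C}}$.

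The first substantive step is the identity $\mathfrak{p}\cap\mathfrak{g}=\mathfrak{m}$. If $x\in\mathfrak{g}$ normalizes $\mathfrak{q}$, then applying conjugation and using $x=\overline{x}$ shows that $x$ normalizes $\overline{\mathfrak{q}}$, hence it normalizes $\mathfrak{q}\cap\overline{\mathfrak{q}}=\mathfrak{h}_{\mathbb{C}}$ and therefore $\mathfrak{h}$; thus $x\in N_{\mathfrak{g}}(\mathfrak{h})$ and $ad(x)$ descends to $\overline{ad}(x)$ on $\mathfrak{g}_{\mathbb{C}}/\mathfrak{h}_{\mathbb{C}}$. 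As $\mathfrak{q}/\mathfrak{h}_{\mathbb{C}}=(\mathfrak{g}/\mathfrak{h})_{+}$ is exactly the $\mathbf{i}$ eigenspace of $J$, the relation $[x,\mathfrak{q}]\subseteq\mathfrak{q}$ says precisely that $\overline{ad}(x)$ preserves this eigenspace; since $x$ is real, $\overline{ad}(x)$ then preserves the $-\mathbf{i}$ eigenspace as well, i.e.\ $[\overline{ad}(x),J]=0$. The reverse implication is immediate, so $\mathfrak{p}\cap\mathfrak{g}=\mathfrak{m}$. Finally, $\mathfrak{p}\cap\overline{\mathfrak{p}}$ is stable under conjugation and $\mathfrak{p}\cap\overline{\mathfrak{p}}\cap\mathfrak{g}=\mathfrak{p}\cap\mathfrak{g}=\mathfrak{m}$ (any real element of $\mathfrak{p}$ lies in $\overline{\mathfrak{p}}$), so splitting its elements into real and imaginary parts gives $\mathfrak{p}\cap\overline{\mathfrak{p}}=\mathfrak{m}_{\mathbb{C}}$.

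Now the dimension bookkeeping and the parabolicity. From $\mathfrak{q}\subseteq\mathfrak{p}$ and $\mathfrak{q}+\overline{\mathfrak{q}}=\mathfrak{g}_{\mathbb{C}}$ we get $\mathfrak{p}+\overline{\mathfrak{p}}=\mathfrak{g}_{\mathbb{C}}$, hence $\dim_{\mathbb{R}}\mathfrak{p}=2\dim_{\mathbb{C}}\mathfrak{p}=\dim_{\mathbb{C}}(\mathfrak{p}+\overline{\mathfrak{p}})+\dim_{\mathbb{C}}(\mathfrak{p}\cap\overline{\mathfrak{p}})=\dim_{\mathbb{R}}\mathfrak{g}+\dim_{\mathbb{R}}\mathfrak{m}$. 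If $\mathfrak{u}$ is any subalgebra with $\mathfrak{u}\oplus\mathfrak{h}=\mathfrak{m}$, then $\mathfrak{u}\subseteq\mathfrak{m}\subseteq\mathfrak{p}$ and $\mathfrak{q}\subseteq\mathfrak{p}$, while $\mathfrak{u}\cap\mathfrak{q}\subseteq\mathfrak{g}\cap\mathfrak{q}=\mathfrak{h}$ forces $\mathfrak{u}\cap\mathfrak{q}=0$; since $\dim_{\mathbb{R}}(\mathfrak{u}\oplus\mathfrak{q})=(\dim\mathfrak{m}-\dim\mathfrak{h})+(2\dim\mathfrak{h}+\dim(\mathfrak{g}/\mathfrak{h}))=\dim\mathfrak{g}+\dim\mathfrak{m}=\dim_{\mathbb{R}}\mathfrak{p}$, we conclude $\mathfrak{p}=\mathfrak{u}\oplus\mathfrak{q}$ as real vector spaces. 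For parabolicity, pick a maximal abelian subalgebra $\mathfrak{t}_{0}$ of $\mathfrak{g}$ containing the abelian subalgebra $\mathfrak{t}$ furnished by Theorem \ref{thm:main1-wang}; in a compact Lie algebra $\mathfrak{t}_{0}$ is a maximal torus, and $\mathfrak{t}_{0}\subseteq Z_{\mathfrak{g}}(\mathfrak{t})=\mathfrak{m}\subseteq\mathfrak{p}$, so the Cartan subalgebra $(\mathfrak{t}_{0})_{\mathbb{C}}$ of $\mathfrak{g}_{\mathbb{C}}$ lies in the complex subalgebra $\mathfrak{p}$. Hence $\mathfrak{p}=(\mathfrak{t}_{0})_{\mathbb{C}}\oplus\bigoplus_{\alpha\in\Psi}\mathfrak{g}_{\alpha}$ for some $\Psi\subseteq\Phi$, where $\Phi$ is the root system; $\Psi$ is closed because $\mathfrak{p}$ is a subalgebra, and $\Psi\cup(-\Psi)=\Phi$ because conjugation carries $\mathfrak{g}_{\alpha}$ to $\mathfrak{g}_{-\alpha}$ (so $\overline{\mathfrak{p}}$ corresponds to $-\Psi$) while $\mathfrak{p}+\overline{\mathfrak{p}}=\mathfrak{g}_{\mathbb{C}}$. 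A closed subset of roots meeting every pair $\{\alpha,-\alpha\}$ is a parabolic subset, so $\mathfrak{p}$ is a parabolic subalgebra; since $\mathfrak{g}$ is only assumed reductive, one uses the reductive version of this characterization, noting $\mathfrak{z}(\mathfrak{g})_{\mathbb{C}}\subseteq\mathfrak{m}_{\mathbb{C}}\subseteq\mathfrak{p}$. The last assertion $\mathfrak{p}\cap\mathfrak{g}=\mathfrak{m}$ has already been proved.

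I expect the main obstacle to be the identity $\mathfrak{p}\cap\mathfrak{g}=\mathfrak{m}$: it is the only place where the intrinsic algebra $\mathfrak{m}$ of Theorem \ref{thm:main1-wang} is linked to the extrinsically defined normalizer of $\mathfrak{q}$, and it depends on the observation that normalizing $\mathfrak{q}$ automatically forces normalizing $\overline{\mathfrak{q}}$, and hence $\mathfrak{h}$ --- which in turn uses compactness of $\mathfrak{g}$ through the fact that $\mathfrak{g}$ is a real form of $\mathfrak{g}_{\mathbb{C}}$ carrying no real eigenvectors of $J$. Once that is in place, the decomposition $\mathfrak{p}=\mathfrak{u}\oplus\mathfrak{q}$ is a dimension count and the parabolicity is the standard root-theoretic criterion.
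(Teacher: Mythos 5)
Most of your argument is sound and in places cleaner than the paper's: the identities $\overline{\mathfrak{q}}=p^{-1}((\mathfrak{g}/\mathfrak{h})_{-})$, $\mathfrak{q}\cap\overline{\mathfrak{q}}=\mathfrak{h}_{\mathbb{C}}$, $\mathfrak{q}+\overline{\mathfrak{q}}=\mathfrak{g}_{\mathbb{C}}$, the direct verification that $N_{\mathfrak{g}_{\mathbb{C}}}(\mathfrak{q})\cap\mathfrak{g}=\mathfrak{m}$ (the paper gets this as Corollary \ref{coro:12}, \emph{after} the main theorem), the computation $\dim_{\mathbb{R}}\mathfrak{p}=\dim\mathfrak{g}+\dim\mathfrak{m}$, and the resulting decomposition $\mathfrak{p}=\mathfrak{u}\oplus\mathfrak{q}$ are all correct and need none of the paper's heavy machinery. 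The root-theoretic criterion you invoke at the end (a closed set of roots $\Psi$ with $\Psi\cup(-\Psi)=\Phi$ is parabolic) is also a legitimate standard fact.

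The gap is in how you get a Cartan subalgebra of $\mathfrak{g}_{\mathbb{C}}$ inside $\mathfrak{p}$. You take the abelian subalgebra $\mathfrak{t}$ with $\mathfrak{m}=C_{\mathfrak{g}}(\mathfrak{t})$ ``furnished by Theorem \ref{thm:main1-wang}'' and enlarge it to a maximal torus $\mathfrak{t}_{0}\subset\mathfrak{m}$. But in this paper the assertion that $\mathfrak{m}$ is the centralizer of an abelian subalgebra is itself \emph{deduced from} the parabolicity of $\mathfrak{p}$ (via the standard fact that $\mathfrak{p}\cap\mathfrak{g}$ is the centralizer of its center for a parabolic $\mathfrak{p}$); Theorems \ref{thm:main1-wang} and \ref{thm:main2-tits} are both corollaries of Theorem \ref{thm:The-parabolic}, not sequential. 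So your argument is circular as it stands: the one fact you import is logically equivalent to the conclusion you are trying to reach. From the definition $\mathfrak{m}=\{x\in N_{\mathfrak{g}}(\mathfrak{h})\mid[\overline{ad}(x),J]=0\}$ alone it is not at all obvious that $\mathfrak{m}$ contains a maximal torus of $\mathfrak{g}$, and establishing this is exactly where the paper spends its effort: Weyl's theorem and the conjugacy of maximal compact subgroups give $G_{\mathbb{C}}=\bar{L}G$ (Propositions \ref{prop:11}, \ref{prop:12}), hence the Levi decomposition $\mathfrak{l}=\mathfrak{r}\oplus\mathfrak{k}_{\mathbb{C}}$ of $\mathfrak{l}=\mathfrak{q}$ (Corollary \ref{coro:11}), and then a Borel subalgebra $\mathfrak{b}\supset\mathfrak{b}_{1}$ of $\mathfrak{g}_{\mathbb{C}}$ with $\mathfrak{b}=\mathfrak{u}\oplus\mathfrak{b}_{1}$ and $\mathfrak{a}=\mathfrak{b}\cap\mathfrak{g}$ a Cartan subalgebra satisfying $\mathfrak{a}_{\mathbb{C}}\subset\mathfrak{p}$ (Proposition \ref{prop:13}, Lemma \ref{lmm:14}). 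To close your proof you would need to supply this step --- or an independent proof of Theorem \ref{thm:main1-wang}(i) --- rather than cite it.
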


Note that the abelian algebra  $\mathfrak{m}/\mathfrak{h}$ is canonically associated with $J$.
One of the main results in Tits \cite{Tits} can be derived from the above theorem
easily (cf. Section 4). It also leads to some classification results, which
will be described below. Let $\mathfrak{g},\mathfrak{h}$ be as above with
$\dim\mathfrak{g}/\mathfrak{h}$ even and such that there exists a subalgebra
$\mathfrak{m}$, the centralizer of an abelian subalgebra of $\mathfrak{g}$,
such that $\mathfrak{m}\supset\mathfrak{h}$ and $[\mathfrak{m},\mathfrak{m}%
]=[\mathfrak{h},\mathfrak{h}]$. If $\mathfrak{p}$ is a parabolic subalgebra of
$\mathfrak{g}_{\mathbb{C}}$ such that $\mathfrak{m}=\mathfrak{p}%
\cap\mathfrak{g}$ and $[\mathfrak{m},\mathfrak{m}]=[\mathfrak{h}%
,\mathfrak{h}]$ (this gives all choices of the Lie algebras, $\mathfrak{m}$,
appearing in Theorem \ref{Wang}) then if $J_{1}$ is a complex structure on
the even dimensional space $\mathfrak{m}/\mathfrak{h}$ (which is automatically
integrable since $\mathfrak{m}/\mathfrak{h}$ is an abelian Lie algebra) then
there exists $J(\mathfrak{p},J_{1})$ an integrable $\mathfrak{h}$--invariant
complex structure on $\mathfrak{g}/\mathfrak{h}$ such that $J_{1}%
=J_{|\mathfrak{m/\mathfrak{h}}}$. This is proved by giving a formula for
$p^{-1}\left(  \mathfrak{g}/\mathfrak{h}\right)_{+}$ equal to the nilradical
of $\mathfrak{p}$ direct sum with $p^{-1}\left(  \mathfrak{m}/\mathfrak{h}%
\right)  _{+}$.

\begin{theorem}
\label{thm:main3-moduli} Every $\mathfrak{h}$--invariant, integrable, complex
structure on $\mathfrak{g}/\mathfrak{h}$ is one of the $J(\mathfrak{p},
J_{1})$.
\end{theorem}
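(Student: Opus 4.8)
The plan is to show that any $\mathfrak{h}$-invariant integrable complex structure $J$ on $\mathfrak{g}/\mathfrak{h}$ is recovered from the data $(\mathfrak{p}, J_1)$ canonically attached to it by the earlier theorems. First I would produce the parabolic: given $J$, Theorem \ref{thm:main1-wang}(i) furnishes the subalgebra $\mathfrak{m}=\{x\in N_{\mathfrak{g}}(\mathfrak{h})\mid[\overline{ad}(x),J]=0\}$, which is the centralizer of an abelian subalgebra of $\mathfrak{g}$, contains $\mathfrak{h}$, and satisfies $[\mathfrak{m},\mathfrak{m}]=[\mathfrak{h},\mathfrak{h}]$. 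Theorem \ref{thm:main2-tits} then attaches to $J$ the parabolic subalgebra $\mathfrak{p}\subset\mathfrak{g}_{\mathbb C}$, namely the normalizer of $p^{-1}((\mathfrak{g}/\mathfrak{h})_+)$, with $\mathfrak{p}\cap\mathfrak{g}=\mathfrak{m}$ and $\mathfrak{p}=\mathfrak{u}\oplus p^{-1}((\mathfrak{g}/\mathfrak{h})_+)$ as a real vector space, where $\mathfrak{u}\oplus\mathfrak{h}=\mathfrak{m}$. Since $\mathfrak{m}\supset\mathfrak{h}$, $[\mathfrak{m},\mathfrak{m}]=[\mathfrak{h},\mathfrak{h}]$, and $\mathfrak{m}=\mathfrak{p}\cap\mathfrak{g}$, the pair $(\mathfrak{p},\mathfrak{h})$ is exactly of the type that enters the construction of the family $J(\mathfrak{p},J_1)$; so I set $J_1 := J|_{\mathfrak{m}/\mathfrak{h}}$, which makes sense because $\overline{ad}(x)$ commutes with $J$ for every $x\in\mathfrak{m}$, hence $J$ preserves the subspace $\mathfrak{m}/\mathfrak{h}\subset\mathfrak{g}/\mathfrak{h}$, and this $J_1$ is automatically integrable since $\mathfrak{m}/\mathfrak{h}$ is abelian.

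It then remains to verify $J = J(\mathfrak{p},J_1)$. Both are $\mathfrak{h}$-invariant complex structures on $\mathfrak{g}/\mathfrak{h}$, so it suffices to show they have the same $\mathbf i$-eigenspace in $\mathfrak{g}_{\mathbb C}/\mathfrak{h}_{\mathbb C}$, equivalently that $p^{-1}((\mathfrak{g}/\mathfrak{h})_+)$ is the same for both. By the discussion preceding the theorem, $J(\mathfrak{p},J_1)$ is defined precisely so that $p^{-1}((\mathfrak{g}/\mathfrak{h})_+)$ equals $\mathfrak{n}\oplus p^{-1}((\mathfrak{m}/\mathfrak{h})_+)$, where $\mathfrak{n}$ is the nilradical of $\mathfrak{p}$ and $(\mathfrak{m}/\mathfrak{h})_+$ is the $\mathbf i$-eigenspace of $J_1$. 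So the task reduces to proving that the $+$-eigenspace of the \emph{given} $J$ decomposes the same way, i.e.
$$p^{-1}\big((\mathfrak{g}/\mathfrak{h})_+\big) \;=\; \mathfrak{n} \;\oplus\; p^{-1}\big((\mathfrak{m}/\mathfrak{h})_+\big),$$
where on the right $(\mathfrak{m}/\mathfrak{h})_+$ is the $\mathbf i$-eigenspace of $J_1 = J|_{\mathfrak{m}/\mathfrak{h}}$. The inclusion $p^{-1}((\mathfrak{m}/\mathfrak{h})_+)\subset p^{-1}((\mathfrak{g}/\mathfrak{h})_+)$ is immediate from $J_1 = J|_{\mathfrak{m}/\mathfrak{h}}$ and $\mathfrak{h}_{\mathbb C}\subset p^{-1}((\mathfrak{g}/\mathfrak{h})_+)$. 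For $\mathfrak{n}\subset p^{-1}((\mathfrak{g}/\mathfrak{h})_+)$, I would use that $\mathfrak{p}$ is by construction the normalizer of $\mathfrak{q} := p^{-1}((\mathfrak{g}/\mathfrak{h})_+)$, so $\mathfrak{q}$ is a $\mathfrak{p}$-submodule of $\mathfrak{g}_{\mathbb C}$; since $\mathfrak{n}$ is the nilradical of $\mathfrak{p}$ and $\mathfrak{q}\supset\mathfrak{h}_{\mathbb C}$ with $\mathfrak{q}/\mathfrak{h}_{\mathbb C} = (\mathfrak{g}/\mathfrak{h})_+$ a half-dimensional integrable (so involutive) subspace, a Levi/root-space argument shows that the $\mathfrak{p}$-invariance of $\mathfrak{q}$ forces $\mathfrak{q}$ to contain $\mathfrak{n}$ — otherwise $\mathfrak{q}+\bar{\mathfrak q}$ would fail to be all of $\mathfrak{g}_{\mathbb C}$ or $\mathfrak{q}\cap\bar{\mathfrak q}$ would exceed $\mathfrak{h}_{\mathbb C}$, contradicting that $J$ is a complex structure. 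Finally a dimension count using $\dim\mathfrak{n} + \dim p^{-1}((\mathfrak{m}/\mathfrak{h})_+) = \dim\mathfrak{n} + \dim\mathfrak{h}_{\mathbb C} + \tfrac12\dim_{\mathbb C}(\mathfrak{m}/\mathfrak{h}) = \dim\mathfrak{h}_{\mathbb C} + \tfrac12\dim_{\mathbb C}(\mathfrak{g}/\mathfrak{h}) = \dim\mathfrak{q}$ (the middle equality using $\dim\mathfrak{n} = \tfrac12(\dim\mathfrak{g}_{\mathbb C} - \dim\mathfrak{m}_{\mathbb C}) = \tfrac12\dim_{\mathbb C}(\mathfrak{g}/\mathfrak{m})$ and $\dim_{\mathbb C}(\mathfrak{g}/\mathfrak{h}) = \dim_{\mathbb C}(\mathfrak{g}/\mathfrak{m}) + \dim_{\mathbb C}(\mathfrak{m}/\mathfrak{h})$) upgrades the two inclusions to the desired equality, and the direct-sum statement follows since $\mathfrak{n}\cap p^{-1}(\mathfrak{m}/\mathfrak{h})_{\mathbb C} \subset \mathfrak{n}\cap\mathfrak{m}_{\mathbb C} = 0$.

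The main obstacle I anticipate is the middle step: showing that the nilradical $\mathfrak{n}$ of the parabolic $\mathfrak{p}$ attached to $J$ is automatically contained in the $+$-eigenspace $\mathfrak{q} = p^{-1}((\mathfrak{g}/\mathfrak{h})_+)$. Intuitively this is forced because $\mathfrak{q}$ is $\mathfrak{p}$-stable and the $\mathfrak{p}$-action on $\mathfrak{g}_{\mathbb C}/\mathfrak{m}_{\mathbb C}$ pushes everything into the nilradical direction, but making this precise requires choosing a Cartan subalgebra inside $\mathfrak{m}_{\mathbb C}$, writing $\mathfrak{p} = \mathfrak{m}_{\mathbb C} \oplus \mathfrak{n}$ as a sum of root spaces, and tracking carefully how the complex-structure conditions $\mathfrak{q}\cap\bar{\mathfrak q} = \mathfrak{h}_{\mathbb C}$ and $\mathfrak{q}+\bar{\mathfrak q} = \mathfrak{g}_{\mathbb C}$ interact with the root-space decomposition — essentially re-deriving, from the normalizer characterization, the explicit shape of $\mathfrak{q}$ that was used to define $J(\mathfrak{p},J_1)$ in the first place. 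Everything else is bookkeeping with the already-established Theorems \ref{thm:main1-wang} and \ref{thm:main2-tits} and elementary linear algebra of complex structures.
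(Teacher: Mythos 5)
Your proposal is correct and follows essentially the same route as the paper: the paper deduces this theorem (stated precisely as Theorem \ref{thm:22}) directly from Corollary \ref{BasicObs}, which is exactly the decomposition $p^{-1}((\mathfrak{g}/\mathfrak{h})_+)=p^{-1}((\mathfrak{m}/\mathfrak{h})_+)\oplus\mathfrak{n}$ that you identify as the heart of the matter. The only remark is that the sub-step you anticipate as the main obstacle, $\mathfrak{n}\subset p^{-1}((\mathfrak{g}/\mathfrak{h})_+)$, needs no argument by contradiction: it falls out of the already-established splitting $\mathfrak{p}=\mathfrak{u}\oplus p^{-1}((\mathfrak{g}/\mathfrak{h})_+)$ with $\mathfrak{u}$ inside the Cartan subalgebra $\mathfrak{a}_{\mathbb{C}}$, since both summands are $\mathfrak{a}_{\mathbb{C}}$-stable and every root space of $\mathfrak{p}$ must therefore land in the second summand.
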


This generalizes the result of Pittie. In the last section of the paper we
show how these results imply those of Tits in the special case when there is a
transitive compact group of holomorphic transformations (and therefore those
of Wang on C--spaces).

We will now point out a few results that follow from our Lie algebra analysis
of complex structures and generalize the classic work of Wang and Tits. Note
that the space that we call a flag manifold (or variety) is also called a
generalized flag manifold or a D--space by Tits (the French  for flag is
drapeau). These spaces are exactly the projective varieties with a transitive
algebraic action of a reductive, affine algebraic group. If $G$ is a Lie
group, $H$ is a closed Lie subgroup of $G$, and the normalizer of $H$ in $G$,
$N_{G}(H)$, acts on $G/H$ by right translation $R_{n}gH=gn^{-1}H=gn^{-1}%
nHn^{-1}=gHn^{-1}$. If we set $L_{x}gH=xgH$ for $x\in G$ then we have the
smooth action $W_{x}=L_{x}R_{x}$ on $G/H$.

\begin{theorem}
\label{gen-Wang} Let $G$ be a connected compact Lie group and let $H$ be a
closed connected subgroup of $G$. If $G/H$ admits a $G$--invariant complex
structure, $J$, and if $M$ is the identity component of the subgroup of
$N_{G}(H)$ whose action on $T_{eH}(G/H)$ under $dW_{eH}$ is complex linear,
then $[M,M]=[H,H]$ and $G/M$ is a flag variety and $M/H$ is a complex torus
with the corresponding quotient complex structures. Thus $G/H$ admits a
holomorphic $G $--equivariant fibration
\[
M/H\hookrightarrow G/H\rightarrow G/M.
\]
That is $G/H$ is a holomorphic fiber bundle with base a flag variety and fibre
a compact complex torus with $G$--equivariant projection.
\end{theorem}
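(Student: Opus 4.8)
The plan is to transport the Lie-algebra statements of Theorems \ref{thm:main1-wang} and \ref{thm:main2-tits} to the group level, using the fact (the author's key replacement for Pittie's topological argument) that maximal compact subgroups are conjugate, together with standard facts relating $J$-invariance of subgroups and the holomorphy of the associated fibrations. First I would set $\mathfrak{g} = \mathrm{Lie}(G)$, $\mathfrak{h} = \mathrm{Lie}(H)$, and equip $\mathfrak{g}$ with an $\mathrm{Ad}(G)$-invariant inner product (possible since $G$ is compact). The $G$-invariant almost complex structure $J$ on $G/H$ pulls back to an $\mathfrak{h}$-invariant complex structure on $\mathfrak{g}/\mathfrak{h}$, still denoted $J$, and since $J$ is a genuine (integrable) complex structure on the manifold, its Nijenhuis tensor vanishes, so $N=0$ in the sense of Definition \ref{NDef}. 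Thus the hypotheses of Theorem \ref{thm:main1-wang}(i) are met, producing $\mathfrak{m} = \{x \in N_{\mathfrak{g}}(\mathfrak{h}) : [\overline{ad}(x),J]=0\}$, which is the centralizer of an abelian subalgebra, contains $\mathfrak{h}$, and satisfies $[\mathfrak{m},\mathfrak{m}]=[\mathfrak{h},\mathfrak{h}]$.

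Next I would identify $\mathfrak{m}$ with $\mathrm{Lie}(M)$. By definition $dW_{eH}$ is the action of $N_G(H)$ on $T_{eH}(G/H) = \mathfrak{g}/\mathfrak{h}$, which on the Lie-algebra level is exactly $\overline{ad}$ restricted to $N_{\mathfrak{g}}(\mathfrak{h})$; hence an element of $N_G(H)$ acts complex-linearly precisely when the corresponding one-parameter subgroup has generator in $\mathfrak{m}$. Taking identity components gives $\mathrm{Lie}(M) = \mathfrak{m}$, so $[M,M]=[H,H]$ follows by exponentiating $[\mathfrak{m},\mathfrak{m}]=[\mathfrak{h},\mathfrak{h}]$ and using connectedness of $H$ (so that $[H,H]$ is the connected subgroup with Lie algebra $[\mathfrak{h},\mathfrak{h}]$, and similarly for $[M,M]$). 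Because $\mathfrak{m}$ is $J$-invariant and $N=0$, the subspace $\mathfrak{m}/\mathfrak{h} \subset \mathfrak{g}/\mathfrak{h}$ is a complex subspace, integrable and abelian; it is the tangent space at $eM$... rather at $eH$ of the fibre $M/H$, so $M/H$ inherits a complex structure, and since $\mathfrak{m}/\mathfrak{h}$ is abelian and compact (quotient of compact $M$ by closed $H\supset [M,M]$, so $M/H$ is a torus), $M/H$ is a complex torus.

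Then I would show $G/M$ is a flag variety. Apply Theorem \ref{thm:main2-tits}: the normalizer $\mathfrak{p}$ of $p^{-1}((\mathfrak{g}/\mathfrak{h})_+)$ in $\mathfrak{g}_{\mathbb{C}}$ is a parabolic subalgebra with $\mathfrak{p}\cap\mathfrak{g}=\mathfrak{m}$. Passing to the complexification $G_{\mathbb{C}}$ of $G$ (or a reductive complex group with Lie algebra $\mathfrak{g}_{\mathbb{C}}$), let $P$ be the parabolic subgroup with Lie algebra $\mathfrak{p}$; then $G_{\mathbb{C}}/P$ is a projective rational homogeneous space, i.e. a generalized flag manifold, and the inclusion $G \hookrightarrow G_{\mathbb{C}}$ induces a diffeomorphism $G/(G\cap P) = G/M \to G_{\mathbb{C}}/P$ which is biholomorphic for the complex structure on $G/M$ induced by $J$ (this uses $\mathfrak{p} = \mathfrak{u}\oplus p^{-1}((\mathfrak{g}/\mathfrak{h})_+)$ from Theorem \ref{thm:main2-tits}, so that the $+i$-eigenspace of $J$ on $(\mathfrak{g}/\mathfrak{m})_{\mathbb{C}}$ matches $\mathfrak{p}/\mathfrak{m}_{\mathbb{C}}$). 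Hence $G/M$ is a flag variety. Finally, since $\mathfrak{h}\subset\mathfrak{m}$ are both $J$-compatible in the appropriate sense and the projection $\mathfrak{g}/\mathfrak{h} \to \mathfrak{g}/\mathfrak{m}$ is complex-linear with complex kernel $\mathfrak{m}/\mathfrak{h}$, the natural map $G/H \to G/M$ is holomorphic, $G$-equivariant, with fibre $M/H$; being a homogeneous fibration of a compact manifold it is a locally trivial holomorphic fibre bundle, giving the displayed sequence $M/H \hookrightarrow G/H \to G/M$.

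The step I expect to be the main obstacle is the last identification on the complex-analytic side: verifying that the diffeomorphism $G/M \to G_{\mathbb{C}}/P$ is actually biholomorphic, and more generally that the complex structure $J$ descends to and matches the algebraic structure on the flag variety — i.e. checking compatibility of the eigenspace decompositions at every point, not merely at $eH$, which requires a careful $G$-equivariance argument and use of integrability ($N=0$) to propagate the identification. The torus and equivariant-fibration claims are then comparatively routine once $[M,M]=[H,H]$ and the holomorphy of $G/H\to G/M$ are in hand.
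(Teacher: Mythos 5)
Your proposal is correct and follows essentially the same route as the paper: the paper's own proof simply invokes Theorem \ref{thm:The-parabolic} and Corollary \ref{coro:12} (the body versions of Theorems \ref{thm:main1-wang} and \ref{thm:main2-tits}) to identify $\mathrm{Lie}(M)$ with $\mathfrak{m}$, notes that $M$ is the centralizer of a torus so $G/M$ is a flag variety, and declares the fibration obvious. You have merely filled in the details (the $dW_{eH}$--$\overline{ad}$ identification, the complex torus structure on $M/H$ via Corollary \ref{BasicObs}, and the biholomorphism $G/M\cong G_{\mathbb{C}}/P$) that the paper leaves implicit.
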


Note the above applies to any compact Hermitian homogeneous manifold (namely,
one that admits a transitive action of holomorphic isometries), hence a result which is equivalent to Theorem B of \cite{HK}. A more refined
classification with additional assumptions on the Chern connection of the
Hermitian structure was recently proved in \cite{NZ2}, without assuming the
compactness. Some special cases assuming the compactness were obtained in an
earlier paper \cite{NZ1}. Another classification was obtained for Hermitian manifolds whose Bismut connection has K\"ahler-like Bismut curvature was recently obtained in \cite{BPT} and \cite{ZZ}.

The result of Pittie, which was perhaps implicit in \cite{Samelson}, is a
special case (namely $\mathfrak{h}=\{0\}$) of Theorems \ref{thm:main1-wang}
and \ref{thm:main2-tits}, plays a crucial role in a recent work \cite{BPT} on
the study of pluriclosed manifolds with parallel Bismut torsion (see also
\cite{PZ} and reference therein for works on studies of homogeneous Hermitian
manifolds with parallel Bismut torsion). In particular, the existence of a
canonical abelian subalgebra (noting $\mathfrak{m}$ in Theorem
\ref{thm:main1-wang} is abelian when $\mathfrak{h}=\{0\}$) associated with an
integrable complex structure of a compact Lie algebra $\mathfrak{g}$
(corresponding to the special case $\mathfrak{h}=\{0\}$) holds the key
starting step of the analysis in \cite{BPT}.\footnote{The other key
observation of \cite{BPT}
is that under the assumption that the Bismut curvature $R^b$ is K\"ahler-like, the
torsion tensor $T^b$ of the Bismut connection provides a Lie algebra
structure on $T_{\mathbb{C}}M$ by $[X, Y]_2:=T^b(X, Y)$. This is related to  \cite{NZ2} where
the curvature tensor  $ R^c$ of the Chern connection provides a Lie algebra structure by $[X, Y]_1:=-R^c_{X, Y}$ on  $T_{\mathbb{C}}M\oplus \mathfrak{k}$ as the case of the Levi-Civita connection and the Riemannian curvature. Both are simple consequences of the 1st generalized Bianchi identity together with the parallelness of the  Chern torsion or the Bismut torsion.  The Jacobi identity for $[\cdot, \cdot]_i$, $i=1, 2$,  were first noted in  \cite{Nomizu} for a general setting. The Jacobi identity for $[\cdot, \cdot]_2$  was also known previously, particularly relating to the work of \cite{Agri, AFF, Ziller}. } Theorem \ref{thm:main3-moduli} leads to a classification of all
complex structures on $G/H$ in the notation of the previous theorem. Denote by
$C_{G}(H)$ the centralizer of $H$ in $G$. Set $C_{H}$ equal to the identity
component of the center of $H$.

\begin{theorem}
\label{description} Suppose that $G$ and $H$ are as in the previous theorem
and $G/H$ admits a $G $--invariant complex structure. Then all complex
structures are gotten as follows: Let $T$ be a maximal torus in $C_{G}(H)$ set
$M=TH$ then $G/M$ has a complex structure that makes it into a flag variety.
Furthermore, $M/H=T/C_{H}$ is an even dimensional torus. Each choice of a
complex structure on $G/M$ (a finite set with cardinality $\chi(G/M)$) and a
complex structure on $T/C_{H}$ yields a unique complex structure on $G/H$.
\end{theorem}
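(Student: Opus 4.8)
The plan is to transport the statement to the Lie-algebra setting of Theorems \ref{thm:main1-wang}--\ref{thm:main3-moduli} and then read off the description. Since $H$ is connected, $G$--invariant almost complex structures on $G/H$ are exactly the $\mathfrak{h}$--invariant structures $J$ on $\mathfrak{g}/\mathfrak{h}$, with integrability corresponding to $N=0$. So I fix such a $J$, let $\mathfrak{m}=\mathfrak{m}(J)$ be the subalgebra of Theorem \ref{thm:main1-wang}(i) --- thus $\mathfrak{h}\subset\mathfrak{m}$, $[\mathfrak{m},\mathfrak{m}]=[\mathfrak{h},\mathfrak{h}]$, and $\mathfrak{m}$ is the centralizer in $\mathfrak{g}$ of an abelian subalgebra --- and recall from Theorem \ref{gen-Wang} that the group $M$ there has Lie algebra $\mathfrak{m}$.

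The first and crucial step is a structural identification of $\mathfrak{m}$. Since $\mathfrak{g}$ is compact, $\mathfrak{m}$ is reductive, so $\mathfrak{m}=\mathfrak{z}(\mathfrak{m})\oplus[\mathfrak{m},\mathfrak{m}]=\mathfrak{z}(\mathfrak{m})\oplus[\mathfrak{h},\mathfrak{h}]$; moreover the centralizer of an abelian subalgebra always equals the centralizer of its own center, so $\mathfrak{m}=C_{\mathfrak{g}}(\mathfrak{z}(\mathfrak{m}))$. A short argument using the semisimplicity of $[\mathfrak{h},\mathfrak{h}]$ gives $\mathfrak{z}(\mathfrak{h})\subset\mathfrak{z}(\mathfrak{m})$, whence $\mathfrak{z}(\mathfrak{m})$ centralizes all of $\mathfrak{h}$, i.e.\ $\mathfrak{z}(\mathfrak{m})\subset C_{\mathfrak{g}}(\mathfrak{h})$. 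I then enlarge $\mathfrak{z}(\mathfrak{m})$ to a maximal torus $\mathfrak{t}$ of $C_{\mathfrak{g}}(\mathfrak{h})$, i.e.\ the Lie algebra of a maximal torus $T$ of $C_G(H)$: since $\mathfrak{t}$ centralizes $\mathfrak{h}$ one has $\mathfrak{t}+[\mathfrak{h},\mathfrak{h}]\subset C_{\mathfrak{g}}(\mathfrak{t})$, while $\mathfrak{z}(\mathfrak{m})\subset\mathfrak{t}$ forces $C_{\mathfrak{g}}(\mathfrak{t})\subset C_{\mathfrak{g}}(\mathfrak{z}(\mathfrak{m}))=\mathfrak{m}=\mathfrak{z}(\mathfrak{m})+[\mathfrak{h},\mathfrak{h}]\subset\mathfrak{t}+[\mathfrak{h},\mathfrak{h}]$. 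Hence all of these coincide: $\mathfrak{m}=C_{\mathfrak{g}}(\mathfrak{t})=\mathfrak{t}+\mathfrak{h}$ with $\mathfrak{z}(\mathfrak{m})=\mathfrak{t}$, that is $M=TH$. Since all maximal tori of $C_G(H)$ are conjugate under $C_G(H)^{\circ}$, which centralizes $H$, the conjugacy class of $M=TH$ is independent of $J$ and of the choice of $T$; and $\mathfrak{t}\cap\mathfrak{h}\subset C_{\mathfrak{g}}(\mathfrak{h})\cap\mathfrak{h}=\mathfrak{z}(\mathfrak{h})\subset\mathfrak{t}$, so $M/H\cong T/(T\cap H)$ is a torus with $(T\cap H)^{\circ}=C_H$.

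Next, with $\mathfrak{m}=C_{\mathfrak{g}}(\mathfrak{t})$ established, $\mathfrak{m}_{\mathbb{C}}$ is the Levi factor of a parabolic subalgebra of $\mathfrak{g}_{\mathbb{C}}$ and $M=C_G(T)$ is the centralizer of a torus, so $G/M$ is a generalized flag manifold; admitting a complex structure it is even-dimensional, and therefore so is $\mathfrak{m}/\mathfrak{h}$. Its $G$--invariant complex structures are exactly the parabolics $\mathfrak{p}\subset\mathfrak{g}_{\mathbb{C}}$ with $\mathfrak{p}\cap\mathfrak{g}=\mathfrak{m}$, which form a nonempty finite set; by the standard structure theory of parabolics (counting those with a prescribed Levi) this set has cardinality $\chi(G/M)$. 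The complex structures on the torus $M/H\cong T/(T\cap H)$ are simply the linear complex structures $J_{1}$ on the even-dimensional vector space $\mathfrak{m}/\mathfrak{h}$, automatically integrable as in Theorem \ref{thm:main3-moduli}.

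Finally I assemble the bijection. The structural step shows that for every $\mathfrak{h}$--invariant integrable $J$ one has $\mathfrak{m}(J)=\mathfrak{t}'+\mathfrak{h}$ for a maximal torus $\mathfrak{t}'$ of $C_{\mathfrak{g}}(\mathfrak{h})$, so after conjugating by an element of $C_G(H)^{\circ}$ I may assume $\mathfrak{m}(J)=\mathfrak{m}$; then Theorem \ref{thm:main3-moduli} writes $J=J(\mathfrak{p},J_{1})$ with $\mathfrak{p}\cap\mathfrak{g}=\mathfrak{m}$ and $J_{1}=J|_{\mathfrak{m}/\mathfrak{h}}$, and conversely each pair $(\mathfrak{p},J_{1})$ yields such a $J$. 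The assignment is injective because $J$ recovers $\mathfrak{p}$ as the normalizer of $p^{-1}((\mathfrak{g}/\mathfrak{h})_{+})$ (Theorem \ref{thm:main2-tits}) and recovers $J_{1}$ by restriction. Interpreting $\mathfrak{p}$ as a complex structure on $G/M$ and $J_{1}$ as one on $M/H$ gives the claimed parametrization, and Theorem \ref{gen-Wang} shows the resulting $J$ makes $M/H\hookrightarrow G/H\to G/M$ a holomorphic fibration with these structures. I expect the main obstacle to be the structural computation of the second paragraph --- verifying that the canonical subalgebra $\mathfrak{m}(J)$ is already as large as $C_{\mathfrak{g}}(\mathfrak{t})$, which is what makes $G/M$ a genuine flag variety and legitimizes the count $\chi(G/M)$; the remaining steps are bookkeeping with conjugacy of maximal tori and the earlier theorems.
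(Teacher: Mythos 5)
Your proof is correct and follows essentially the same route as the paper: you identify the center of $\mathfrak{m}(J)$ as a maximal torus of $C_{\mathfrak{g}}(\mathfrak{h})$ (the paper does the identical computation at the group level, showing that $T=Z(M)^{\circ}$ is a maximal torus of $C_{G}(H)$ by using that $M$ is the centralizer of its center), and then invokes the classification of Theorem \ref{thm:22}. If anything, your write-up is more explicit than the paper's about the conjugacy of the various $\mathfrak{m}(J)$ under $C_{G}(H)^{\circ}$ and about the injectivity of the parametrization, both of which the paper leaves implicit.
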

Here $\chi(G/M)$ is the Euler characteristic of $G/M$. One can refer to \cite{BH, Wallach, Wang} for the details on how to relate the total number of integrable complex structures with  $\chi(G/M)$ (as well as the order of the Weyl groups), in particular,  4.8.5, Theorem 6.2.11 and  6.6.1 of \cite{Wallach}.
\begin{theorem}
\label{Herm} Let $M$ be a connected, compact, complex manifold admitting a
homogeneous Hermitian structure. Then if $G$ is the group of all holomorphic
isometries of \thinspace$M$ and if $H$ is the stability group of a point,
$p\in M$ and if $\mathfrak{g}=Lie(G)$ and $\mathfrak{h}=Lie(H) $ and $J$ is
the pull-back of the complex structure of $M$ on $T(M)_{p}$ to $\mathfrak{g}%
/\mathfrak{h}$ then the above theorems apply to $(\mathfrak{g},\mathfrak{h}%
,J)$. In particular, $M$ is a holomorphic fiber bundle with base a flag
variety and fibre a compact complex torus with $G$--equivariant projection.
\end{theorem}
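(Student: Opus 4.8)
The plan is to show that Theorem \ref{Herm} is an essentially immediate packaging of the preceding theorems once we verify that a homogeneous Hermitian manifold satisfies the hypotheses of Theorem \ref{gen-Wang}. First I would recall the standard fact (Myers--Steenrod, or Bochner--Montgomery for the holomorphic category) that the group $G$ of holomorphic isometries of a compact complex manifold $M$ with a homogeneous Hermitian metric is a compact Lie group acting transitively on $M$; here "homogeneous Hermitian" means the group of holomorphic isometries already acts transitively, so $M \cong G/H$ with $H$ the isotropy group at $p$, which is closed. The main point to check is that $H$ is connected: this follows because $M$ is connected and simply... no, $M$ need not be simply connected, so instead I would argue that it suffices to pass to the identity component $H^\circ$ and observe that the $G$-invariant complex structure on $G/H^\circ$ descends, or alternatively note that the statement "the above theorems apply" only requires the Lie-algebraic data $(\mathfrak g, \mathfrak h, J)$, for which connectedness of $H$ is irrelevant.

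Second, I would produce the triple $(\mathfrak g, \mathfrak h, J)$: set $\mathfrak g = \mathrm{Lie}(G)$, which carries an $\mathrm{Ad}(G)$-invariant inner product because $G$ is compact, so $\mathfrak g$ is the Lie algebra of a compact group in the sense used throughout the paper; set $\mathfrak h = \mathrm{Lie}(H)$, a subalgebra; and let $J$ be the pull-back to $\mathfrak g/\mathfrak h \cong T_p M$ of the complex structure of $M$ along the natural identification. Then $\dim \mathfrak g/\mathfrak h = \dim_{\mathbb R} M$ is even, $J$ is $\mathrm{Ad}(H)$-invariant because the complex structure on $M$ is $G$-invariant (hence $H$-invariant at $p$), and $J$ is integrable because the almost complex structure on the complex manifold $M$ is integrable, so its Nijenhuis tensor vanishes, and this vanishing pulls back to $N = 0$ on $\mathfrak g/\mathfrak h$ (this is exactly the identification recorded in the Introduction). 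Thus $(\mathfrak g, \mathfrak h, J)$ is precisely a triple of the kind to which Theorems \ref{thm:main1-wang}, \ref{thm:main2-tits}, \ref{thm:main3-moduli}, \ref{gen-Wang}, and \ref{description} apply, which is the first assertion of the theorem.

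Third, for the fibration statement I would invoke Theorem \ref{gen-Wang} directly. We need its hypotheses: $G$ connected compact and $H$ closed connected. Compactness and connectedness of $G$ hold after possibly replacing $G$ by its identity component (which still acts transitively since $M$ is connected). For $H$: if $H$ is not connected one works with $H^\circ$, for which $G/H^\circ \to G/H$ is a covering and $G/H^\circ$ inherits a $G$-invariant complex structure; applying Theorem \ref{gen-Wang} to $G/H^\circ$ gives the holomorphic fibration $M'/H^\circ \hookrightarrow G/H^\circ \to G/M'$ with $G/M'$ a flag variety, and then one checks that the relevant subgroup $M$ of $N_G(H)$ — the identity component of the set of elements acting complex-linearly on $T_{eH}(G/H)$ — satisfies $M^\circ = M'^\circ$ at the Lie algebra level, so that $[M,M]=[H,H]$, $G/M$ is a flag variety, and $M/H$ is a compact complex torus. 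The upshot is that $M = G/H$ is a holomorphic fiber bundle over the flag variety $G/M$ with fiber the compact complex torus $M/H$ and $G$-equivariant projection, which is the second assertion.

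The genuinely new content here is nil; the theorem is a corollary, so the main (minor) obstacle is bookkeeping around connectedness and the passage between "the group of all holomorphic isometries" and the compact connected transitive group actually used in Theorems \ref{thm:main1-wang} and \ref{gen-Wang}. I would handle this by remarking once and for all that replacing $G$ by $G^\circ$ and $H$ by $H \cap G^\circ$ changes neither the Lie-algebraic triple nor the manifold $M = G/H$, and that connectedness of the fiber and base follows from that of the homogeneous space together with the identifications above; with that remark in place, the proof is simply "apply Theorems \ref{thm:main1-wang}--\ref{gen-Wang} to $(\mathfrak g, \mathfrak h, J)$." I would also note explicitly, for the reader's convenience, that the relation $[M,M] = [H,H]$ is what forces $M/H$ to be abelian hence a torus, and that $G/M$ being a flag variety is the statement from Theorem \ref{thm:main2-tits} that $M = \mathfrak p \cap \mathfrak g$ for a parabolic $\mathfrak p \subset \mathfrak g_{\mathbb C}$.
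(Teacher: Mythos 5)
Your proposal is correct and follows essentially the same route as the paper: establish that the (identity component of the) holomorphic isometry group is a compact connected Lie group acting transitively (compactness of $M$ giving compactness of $G$), identify $T_pM$ with $\mathfrak g/\mathfrak h$ so that the pulled-back $J$ is an $\mathfrak h$--invariant integrable complex structure, and then quote Theorems \ref{thm:main1-wang}--\ref{gen-Wang}. You actually supply more detail than the paper's two-line proof, in particular the bookkeeping about a possibly disconnected isotropy group $H$, which the paper passes over in silence.
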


Finally we also have the following result as an application.

\begin{theorem}
\label{Irreducible} Let $(M,\left\langle \cdot, \cdot\right\rangle )$ be a
pair of a compact, connected complex manifold and a Hermitian structure such
that the group of holomorphic isometries acts transitively and isotropy
irreducibly. Then $(M,\left\langle \cdot, \cdot\right\rangle )$ is a Hermitian
symmetric space.
\end{theorem}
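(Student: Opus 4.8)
The plan is to combine Theorem~\ref{Herm} with the structure theory of flag varieties to force the fiber torus to be a point and the flag variety to be irreducible as a Hermitian symmetric space. First I would invoke Theorem~\ref{Herm}: since $M$ is compact, connected, complex and carries a homogeneous Hermitian structure, with $G$ the group of holomorphic isometries and $H\subset G$ the isotropy at $p$, we obtain the holomorphic $G$--equivariant fibration $M/H=T/C_H \hookrightarrow M \to G/M$ of Theorem~\ref{gen-Wang}/\ref{description}, where $M$ (the group, not the manifold — I would rename it $L$ to avoid the clash, say $L=TH$ with $T$ a maximal torus in $C_G(H)$) has $[L,L]=[H,H]$, $G/L$ is a flag variety, and $L/H$ is a complex torus, each with the induced quotient structures.

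The key step is to use isotropy irreducibility. The isotropy representation of $H$ on $\mathfrak{g}/\mathfrak{h}$ is assumed irreducible over $\mathbb{R}$. But the fibration above gives an $H$--invariant (indeed $L$--invariant, hence $H$--invariant) subspace: the tangent space to the fiber, which is $\mathfrak{l}/\mathfrak{h}\subset\mathfrak{g}/\mathfrak{h}$. By irreducibility this subspace is either $0$ or all of $\mathfrak{g}/\mathfrak{h}$. If it were all of $\mathfrak{g}/\mathfrak{h}$, then $\mathfrak{g}=\mathfrak{l}$, so $G/L$ is a point and $M$ is itself a complex torus; but a complex torus with a transitive isometry group has abelian isotropy acting trivially, which is irreducible only in (complex) dimension one, where the torus is $\mathbb{C}/\Lambda$ — an elliptic curve, which is indeed Hermitian symmetric (flat), so this degenerate case is fine, or can be absorbed. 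Generically, then, $\mathfrak{l}/\mathfrak{h}=0$, i.e. $L=H$, the fiber is a point, and $M=G/H$ is a flag variety.

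Now I would finish by showing that a flag variety $G/H$ which is isotropy irreducible is a Hermitian symmetric space. Here $[H,H]=[L,L]=[\mathfrak{h},\mathfrak{h}]$ combined with $L=H$ tells us $H$ is its own normalizer up to the relevant centralizer data; more to the point, $G/H$ being a flag variety means $H=C_G(T)$ for a torus, and $\mathfrak{h}_{\mathbb C}$ together with $\mathfrak{p}$ (the parabolic of Theorem~\ref{thm:main2-tits}) controls the isotropy: $(\mathfrak{g}/\mathfrak{h})_+$ is the nilradical $\mathfrak{n}$ of $\mathfrak p$ as an $\mathfrak h$--module. Irreducibility of $\mathfrak g/\mathfrak h\cong \mathfrak n\oplus\bar{\mathfrak n}$ as a real $H$--module forces $\mathfrak n$ to be irreducible as a complex $H$--module (the two summands being complex conjugates, a real-irreducible sum of two complex-conjugate pieces forces each to be complex-irreducible), which by the standard theory of parabolic subalgebras means $\mathfrak p$ is a maximal parabolic whose nilradical is abelian — equivalently the simple root omitted occurs with coefficient one in the highest root. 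That is exactly the Borel--de~Siebenthal characterization of the (irreducible) compact Hermitian symmetric spaces. Assembling these, $M=G/H$ is Hermitian symmetric.

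The main obstacle I anticipate is the last step: pinning down that \emph{real} isotropy irreducibility of the flag variety forces the nilradical to be abelian and the parabolic maximal. One must be careful that the decomposition $\mathfrak g/\mathfrak h\cong\mathfrak n_{\mathbb R}$ and the $H$--action on it really do detect the grading of $\mathfrak g_{\mathbb C}$ by the defining cocharacter of $\mathfrak p$, so that any nontrivial piece of that grading beyond degrees $\pm 1$, or any reducibility of the degree-$1$ piece, produces a proper invariant subspace contradicting irreducibility. Handling the semisimple-versus-reductive bookkeeping (the center of $\mathfrak g$, and the distinction between $G$ acting and its semisimple part acting) is where the argument needs the most care, but Theorems~\ref{thm:main2-tits} and~\ref{thm:main3-moduli} supply precisely the description of $(\mathfrak g/\mathfrak h)_+$ needed to make it go through.
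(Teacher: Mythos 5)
Your proposal is correct in outline and its skeleton coincides with the paper's: the paper proves Theorem~\ref{Irreducible} by combining Theorem~\ref{Herm} with the Lie algebraic statement Theorem~\ref{LieAlgIrr}, whose proof runs exactly through your first two steps (irreducibility forces $\mathfrak{m}=\mathfrak{h}$ or $\mathfrak{m}=\mathfrak{g}$; the latter is excluded because $\mathfrak{h}$ would act by $0$ on an even-dimensional space; hence the torus fiber collapses and $(\mathfrak{g}/\mathfrak{h})_{+}\cong\mathfrak{n}$ by Corollary~\ref{BasicObs}). Where you diverge is the endgame. You appeal to the cocharacter grading of $\mathfrak{n}$ and the Borel--de~Siebenthal characterization of maximal parabolics with abelian nilradical, i.e.\ you quote the classification of cominuscule parabolics. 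The paper instead stays self-contained (as it advertises): it observes that $[\mathfrak{n},\mathfrak{n}]$ is a proper $\mathfrak{h}$--invariant subspace of $\mathfrak{n}$ (since $\mathfrak{n}$ is nilpotent), so irreducibility forces $\mathfrak{n}$ abelian; it then shows $[\bar{\mathfrak{n}},\mathfrak{n}]\subset\mathfrak{h}_{\mathbb{C}}$ by a weight argument on the nonzero center $\mathfrak{c}_{\mathfrak{h}}$, and defines the involution $\theta=\operatorname{id}$ on $\mathfrak{h}$, $-\operatorname{id}$ on $V=(\bar{\mathfrak{n}}\oplus\mathfrak{n})\cap\mathfrak{g}$ directly, producing the symmetric pair without any classification. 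The point you flag as the "main obstacle" --- the reductive-versus-semisimple bookkeeping --- is precisely what the involution handles cleanly: $\theta$ preserves $\mathfrak{c}_{\mathfrak{g}}$, effectiveness kills $\mathfrak{h}\cap\mathfrak{c}_{\mathfrak{g}}$, and irreducibility plus even-dimensionality kills $V\cap\mathfrak{c}_{\mathfrak{g}}$, so $\mathfrak{g}$ is semisimple. If you keep your Borel--de~Siebenthal route you must supply this step separately, since that classification presupposes $\mathfrak{g}$ simple (or at least semisimple).

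Two small corrections. First, your treatment of the degenerate case $\mathfrak{l}=\mathfrak{g}$ is muddled: the relevant isotropy representation there is trivial on a space of real dimension $\geq 2$, hence reducible, so the case is simply excluded by the hypothesis rather than "absorbed" (your aside about elliptic curves concerns the finite full isotropy group, not the connected/Lie-algebra isotropy used here). Second, your claim that real irreducibility of $\mathfrak{n}\oplus\bar{\mathfrak{n}}$ forces $\mathfrak{n}$ to be complex-irreducible is fine (a proper invariant $\mathfrak{n}_{1}\subset\mathfrak{n}$ yields the proper real invariant subspace $(\mathfrak{n}_{1}\oplus\bar{\mathfrak{n}}_{1})\cap(\mathfrak{g}/\mathfrak{h})$), but note the paper never needs this refinement: invariance of $[\mathfrak{n},\mathfrak{n}]$ alone already gives abelianness, which is all that is required to build $\theta$.
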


This was previously known for homogeneous spaces via the works \cite{Wolf, WZ}. Here we include a
self-contained direct proof of the Lie algebraic version.

The work of Wang and Tits is at least 60 years old. However, since the deepest
theorems used here are standard parts of Lie theory: Weyl's Theorem (a
connected Lie group with negative definite Killing form is compact) and
Cartan's Theorem (the maximal compact subgroups of a Lie group are conjugate)
as well as some basics on parabolic subalgebras, our methods are elementary.
Also the results on Lie algebras of this paper are strict generalizations
since, $\mathfrak{h}$ might not be the Lie algebra of any closed subgroup of
any compact Lie group with Lie algebra $\mathfrak{g}$. Our classification of
the complex structures on the spaces, Theorems \ref{gen-Wang}, \ref{description}, and  the Lie algebraic version in part (i) of Theorem \ref{thm:main1-wang}, are not
explicitly to be found in the papers of Tits \cite{Tits} and Wang \cite{Wang}. In a forth coming paper we shall study the applications of Theorems  \ref{thm:main1-wang}, \ref{thm:main2-tits}.

\section{The basic results}

If $V$ is a finite dimensional vector space over $\mathbb{R}$ then an element
$J$ in $End(V)$ such that $J^{2}=-1$ makes $V$ into a vector space over
$\mathbb{C}$ and we will call it a complex structure.

Let $J$ be a complex structure on $V$. Setting $V_{\mathbb{C}}=V\otimes
\mathbb{C}$ then we define $V_{+}$ to be the ${\bf{i}}$ eigenspace for $J$ on
$V_{\mathbb{C}}$. Then $V_{+}$ is a complex vector subspace of $V_{\mathbb{C}%
}$ such that
\[
\dim_{\mathbb{C}}V_{+}=\frac{\dim_{\mathbb{R}}V}{2}%
\]
and $V_{+}\cap V=\{0\}$. Thus as a real vector space
\[
V_{\mathbb{C}}=V\oplus V_{+}.
\]
We also observe that given a complex subspace of $V_{\mathbb{C}}$, $X$, such
that $\dim_{\mathbb{C}}X=\frac{\dim_{\mathbb{R}}V}{2}$ such that $X\cap
V=\{0\} $ there exists a unique complex structure on $V$ such that $V_{+}=X$.

If $\mathfrak{g}$ is a Lie algebra over $\mathbb{R}$ and $\mathfrak{h}$ is a
subalgebra then set
\[
N_{\mathfrak{g}}(\mathfrak{h})=\{x\in\mathfrak{g}\, |\, [x,\mathfrak{h}%
]\subset\mathfrak{h}\}.
\]
If $x\in N_{\mathfrak{g}}(\mathfrak{h})$ then $ad(x)$ induces a linear map
$\overline{ad}(x)$ on $\mathfrak{g}/\mathfrak{h}$.

\begin{lemma}
\label{lmm:11} Let $\mathfrak{g}$ be a Lie algebra over $\mathbb{R}$ and let
$\mathfrak{h}$ be a Lie subalgebra and let $J$ be a complex structure on
$\mathfrak{g}/\mathfrak{h}$ such that $[J,\overline{ad}(x)]=0$ for all
$x\in\mathfrak{h}$. If $p:\mathfrak{g}\rightarrow\mathfrak{g}/\mathfrak{h}$ is
the natural surjection and if $u,v\in\mathfrak{g}/\mathfrak{h}$ and
$x,y,x^{\prime},y^{\prime}\in\mathfrak{g}$ are such that
$p(x)=u,p(y)=v,p(x^{\prime})=Ju,p(y^{\prime})=Jv$. Then%
\[
I:=p[x,y]+J(p[x^{\prime},y]+p[x,y^{\prime}])-p[x^{\prime},y^{\prime}]
\]
is independent of the choices of $x,y,x^{\prime},y^{\prime}$.
\end{lemma}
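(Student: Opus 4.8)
The plan is to show that $I$ is unchanged when each of the four lifts is altered by an element of $\mathfrak{h}$, since any two sets of valid lifts differ by such alterations and we can pass from one to the other one coordinate at a time. So fix $u,v$ and suppose we replace $x$ by $x+a$ with $a\in\mathfrak{h}$, keeping $y,x',y'$ fixed; note $p(x+a)=u$ still holds. The change in $I$ is $p[a,y]+J\,p[a,y']$. Here I would use the hypothesis that $J$ commutes with $\overline{ad}$ of elements of $\mathfrak{h}$: since $p[a,y]=\overline{ad}(a)v$ and $p[a,y']=\overline{ad}(a)(Jv)=J\,\overline{ad}(a)v$, the change becomes $\overline{ad}(a)v+J(J\,\overline{ad}(a)v)=\overline{ad}(a)v+J^{2}\overline{ad}(a)v=\overline{ad}(a)v-\overline{ad}(a)v=0$. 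The analogous computation for a change in $y$ (replace $y$ by $y+b$, $b\in\mathfrak{h}$): the change in $I$ is $p[x,b]+J\,p[x',b]=-\overline{ad}(b)u-J\,\overline{ad}(b)(Ju)=-\overline{ad}(b)u-J^{2}\overline{ad}(b)u=0$ by the same token.

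Next I would treat the primed lifts. Replace $x'$ by $x'+a'$ with $a'\in\mathfrak{h}$; we still have $p(x'+a')=Ju$. The change in $I$ is $J\,p[a',y]-p[a',y']=J\,\overline{ad}(a')v-\overline{ad}(a')(Jv)=J\,\overline{ad}(a')v-J\,\overline{ad}(a')v=0$, again invoking $[J,\overline{ad}(a')]=0$. Symmetrically, replacing $y'$ by $y'+b'$ with $b'\in\mathfrak{h}$ changes $I$ by $J\,p[x,b']-p[x',b']=-J\,\overline{ad}(b')u+\overline{ad}(b')(Ju)=-J\,\overline{ad}(b')u+J\,\overline{ad}(b')u=0$. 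Thus each of the four elementary moves leaves $I$ fixed.

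Finally, to conclude: given two choices $(x,y,x',y')$ and $(\tilde x,\tilde y,\tilde x',\tilde y')$ of valid lifts, each corresponding pair differs by an element of $\mathfrak{h}$ (since $p(x)=p(\tilde x)=u$ forces $x-\tilde x\in\mathfrak{h}$, and similarly for the other three, using $p(x')=p(\tilde x')=Ju$, etc.). Interpolating through the four intermediate quadruples obtained by switching one coordinate at a time, each step preserves $I$ by the computations above, so $I$ is the same for both choices. The only point requiring care is bookkeeping of signs in $p[a,y]=\overline{ad}(a)p(y)$ versus $p[x,b]=-\overline{ad}(b)p(x)$ (the bracket is antisymmetric, and $\overline{ad}$ acts in the first slot), but once that is tracked correctly the hypothesis $[J,\overline{ad}(x)]=0$ for $x\in\mathfrak{h}$ together with $J^{2}=-1$ makes every variation cancel. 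I do not anticipate a genuine obstacle here; the argument is a direct verification, and the one thing to be vigilant about is that the commutation hypothesis is only assumed for $x\in\mathfrak{h}$, which is exactly the set from which all the variations $a,b,a',b'$ are drawn.
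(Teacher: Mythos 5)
Your proof is correct and follows essentially the same route as the paper: vary one lift at a time by an element of $\mathfrak{h}$ and use $[J,\overline{ad}(a)]=0$ together with $J^{2}=-1$ to see each variation of $I$ cancels (the paper does the $x$ and $x'$ variations explicitly and invokes the symmetry between $x$ and $y$ for the rest, whereas you write out all four). The sign bookkeeping in your four computations is accurate, so there is nothing to fix.
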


\begin{proof}
If we add $h\in\mathfrak{h}$ to $x$ then by the formula of $I$ we get%
\[
p[x,y]+J(p[x^{\prime},y]+p[x,y^{\prime}])-p[x^{\prime},y^{\prime
}]+p[h,y]+Jp[h,y^{\prime}].
\]
Now%
\begin{eqnarray*}
p[h,y]+Jp[h,y^{\prime}]&=&\overline{ad}(h)v+J\overline{ad}(h)Jv\\
&=&0.
\end{eqnarray*}
Similarly if we replace $x^{\prime}$ by $x^{\prime}+h$ in the formula of $I$
and have%
\[
p[x,y]+J(p[x^{\prime},y]+p[x,y^{\prime}])-p[x^{\prime},y^{\prime
}]+Jp[h,y]-p[h,y^{\prime}]
\]
and similarly
\begin{eqnarray*}
Jp[h,y]-p[h,y^{\prime}]&=&J\overline{ad}(h)(v)-\overline{ad}(h)(J v)\\
&=&0.%
\end{eqnarray*}
Noting the symmetric role of $x$ and $y$, the above proves the lemma.
\end{proof}

The lemma allows us to define a bilinear map $N :\mathfrak{g}%
/\mathfrak{h\times g}/\mathfrak{h} \to \mathfrak{g}/\mathfrak{h}$ by
\[
N(u,v)=p[x,y]+J(p[x^{\prime},y]+p[x,y^{\prime}])-p[x^{\prime},y^{\prime}]
\]
for $u,v\in\mathfrak{g}/\mathfrak{h}$ and $x,y,x^{\prime},y^{\prime}$ as in
the above lemma.

\begin{definition}
\label{NDef}Let $\mathfrak{g}$ be a Lie algebra over $\mathbb{R}$ and
$\mathfrak{h}$ a Lie subalgebra. If $J\in\mathrm{End}(\mathfrak{g}%
/\mathfrak{h})$ satisfies that $J^{2}=-\operatorname{id}$ and $[J,\overline{ad}(x)]=0$ for
$x\in\mathfrak{h}$ then $J$ is called $\mathfrak{h}$--invariant (almost) complex structure;  Moreover an $\mathfrak{h}$--invariant $J$ is called
an $\mathfrak{h}$--invariant integrable complex structure if $N(u,v)=0$ for
all $u,v\in\mathfrak{g}/\mathfrak{h}$.
\end{definition}

Note that $\left(  \mathfrak{g}/\mathfrak{h}\right)  _{\mathbb{C}}$ is
naturally equal to $\mathfrak{g}_{\mathbb{C}}/\mathfrak{h}_{\mathbb{C}}$ and
we extend $p$ to a complex linear map of $\mathfrak{g}_{\mathbb{C}}$ to
$\left(  \mathfrak{g}/\mathfrak{h}\right)  _{\mathbb{C}}$.

\begin{lemma}
\label{lmm:12} Let $\mathfrak{g}$ be a Lie algebra over $\mathbb{R}$ and
$\mathfrak{h}$ a Lie subalgebra and $J$ an $\mathfrak{h}$--invariant complex
structure on $\mathfrak{g}/\mathfrak{h}$. $J$ is integrable if and only if
$p^{-1}(\left(  \mathfrak{g}/\mathfrak{h}\right)  _{+})$ is a Lie algebra.
\end{lemma}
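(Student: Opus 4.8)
The plan is to work with the complexification $\mathfrak{g}_{\mathbb{C}}$ and the decomposition $(\mathfrak{g}/\mathfrak{h})_{\mathbb{C}} = (\mathfrak{g}/\mathfrak{h})_{+} \oplus (\mathfrak{g}/\mathfrak{h})_{-}$ into the $\mathbf{i}$- and $-\mathbf{i}$-eigenspaces of $J$, and to translate the vanishing of $N$ into a closure condition on $p^{-1}((\mathfrak{g}/\mathfrak{h})_{+})$ under the bracket of $\mathfrak{g}_{\mathbb{C}}$. First I would observe that $\mathfrak{q} := p^{-1}((\mathfrak{g}/\mathfrak{h})_{+})$ always contains $\mathfrak{h}_{\mathbb{C}}$ (since $p(\mathfrak{h}_{\mathbb{C}}) = 0$), so $\mathfrak{q}$ is a subalgebra of $\mathfrak{g}_{\mathbb{C}}$ if and only if $[\mathfrak{q},\mathfrak{q}] \subset \mathfrak{q}$, i.e. if and only if $p([a,b]) \in (\mathfrak{g}/\mathfrak{h})_{+}$ for all $a,b \in \mathfrak{q}$; because $\mathfrak{h}_{\mathbb{C}} \subset \mathfrak{q}$, this is a condition only on the images $p(a), p(b) \in (\mathfrak{g}/\mathfrak{h})_{+}$, and it is well-defined by the same argument that underlies Lemma \ref{lmm:11}.

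Next I would connect this to $N$ by a direct computation over $\mathbb{C}$. For $u \in \mathfrak{g}/\mathfrak{h}$ write $u_{+} = \tfrac12(u - \mathbf{i} Ju) \in (\mathfrak{g}/\mathfrak{h})_{+}$; conversely every element of $(\mathfrak{g}/\mathfrak{h})_{+}$ is of this form. If $x \in \mathfrak{g}$ lifts $u$ and $x' \in \mathfrak{g}$ lifts $Ju$, then $x - \mathbf{i} x' \in \mathfrak{g}_{\mathbb{C}}$ lifts $2u_{+}$, hence lies in $\mathfrak{q}$. Bracketing two such lifts $x - \mathbf{i} x'$ and $y - \mathbf{i} y'$ and expanding, $p$ of the bracket equals
\[
p[x,y] - p[x',y'] - \mathbf{i}\,(p[x',y] + p[x,y']).
\]
Comparing with the formula defining $N$, one sees that $p$ of this bracket lies in $(\mathfrak{g}/\mathfrak{h})_{+}$ — i.e. is fixed by the projection onto the $+$-eigenspace — precisely when $N(u,v) = 0$: indeed the real part $p[x,y] - p[x',y']$ and ($-J$ applied to) the imaginary part $p[x',y]+p[x,y']$ together reconstruct $N(u,v)$, so the $-$-component of $p([x-\mathbf{i}x',\,y-\mathbf{i}y'])$ is (up to a nonzero scalar) $N(u,v)_{-} = \tfrac12(N(u,v) - \mathbf{i} J N(u,v))$ reflected appropriately. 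Thus $N(u,v)=0$ for all $u,v$ is equivalent to $[\mathfrak{q},\mathfrak{q}] \subset \mathfrak{q}$.

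The main obstacle I anticipate is purely bookkeeping: one must be careful that $\mathfrak{q}$ is spanned over $\mathbb{C}$ by elements of the form $x - \mathbf{i} x'$ together with $\mathfrak{h}_{\mathbb{C}}$, so that checking the bracket condition on these spanning elements suffices; and one must track the factors of $\tfrac12$ and the signs coming from $J^{2} = -1$ and from the $\mathfrak{h}$-invariance of $J$ (which is exactly what makes $N$, and hence the projection of the bracket, independent of the chosen lifts). Once the algebra is set up, the equivalence falls out by matching the defining expression of $N$ term-by-term with the $(\mathfrak{g}/\mathfrak{h})_{-}$-component of the bracket; I would present the computation once in the forward direction and note that each step is reversible, giving both implications simultaneously.
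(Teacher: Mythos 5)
Your proof is correct, and the core mechanism is the same as the paper's: both translate $N=0$ into bracket-closure of $\mathfrak{q}=p^{-1}((\mathfrak{g}/\mathfrak{h})_{+})$ by an explicit computation with lifts of the form $x-\mathbf{i}x'$ (equivalently, the paper's choice $x'=\mathbf{i}x$ for $x\in\mathfrak{q}$). The packaging differs in one useful way. The paper proves necessity by plugging $x,y\in\mathfrak{q}$ into the complex-bilinearly extended $N$, and then proves sufficiency by a case analysis over the eigenspace decomposition: it checks $N=0$ on $(+,+)$ by reversing the first computation, verifies $N=0$ on $(+,-)$ identically (taking $x'=\mathbf{i}x$, $y'=-\mathbf{i}y$), and gets $(-,-)$ by conjugation. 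You instead parametrize $(\mathfrak{g}/\mathfrak{h})_{+}$ by real elements $u\mapsto u-\mathbf{i}Ju$ and show in a single computation that the $(\mathfrak{g}/\mathfrak{h})_{-}$-component of $p([x-\mathbf{i}x',y-\mathbf{i}y'])$ is a nonzero multiple of (the image under $P_-$ of) $N(u,v)$; since that parametrization is a real-linear bijection onto $(\mathfrak{g}/\mathfrak{h})_{+}$, this gives both implications at once and avoids the three-case analysis. Your reduction to spanning elements (handling $\mathfrak{h}_{\mathbb{C}}$-brackets via the $\mathfrak{h}$-invariance of $J$, exactly the mechanism of Lemma \ref{lmm:11}) is the right justification and is needed for the argument to close. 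One cosmetic slip: with your convention $u_{+}=\tfrac12(u-\mathbf{i}Ju)$, the $-$-projection is $\tfrac12(1+\mathbf{i}J)$, so the $-$-component of the bracket is proportional to $\tfrac12(N(u,v)+\mathbf{i}JN(u,v))$ rather than the expression with a minus sign you wrote; since this vanishes if and only if the real vector $N(u,v)$ does, the conclusion is unaffected.
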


\begin{proof}
For the necessity we need to prove that if $x,y\in p^{-1}\left(  \left(
\mathfrak{g}/\mathfrak{h}\right)  _{+}\right)  $ then $Jp[x,y]={\bf{i}}p[x,y]
$. We note that if $u\in\left(  \mathfrak{g}/\mathfrak{h}\right)  _{+}$ and
$x\in$ $\mathfrak{g}_{\mathbb{C}}$ such that $p(x)=u$ then since $Ju={\bf{i}}u$, $p({\bf{i}}x)=Ju. $ Thus, for $u=p(x),v=p(y)$ we may take $x^{\prime
}={\bf{i}}x,y^{\prime}={\bf{i}}y.$ Thus%
\begin{eqnarray*}
N(u, v)&=&p[x, y]+J(p[{\bf{i}}x, y]+p[x, {\bf{i}}y])-p[{\bf{i}}x, {\bf{i}}y]
\\
&=&2p[x, y]+2{\bf{i}}Jp[x, y]\\
&=&0.
\end{eqnarray*}
For the sufficiency we prove that if $p^{-1}(\left(  \mathfrak{g}%
/\mathfrak{h}\right)  _{+})$ is a Lie algebra then $N=0$. Note that if we read
the above argument backward then if $u,v\in\left(  \mathfrak{g}/\mathfrak{h}%
\right)  _{+} $ then $N(u,v)=0$, provided $p^{-1}(\left(  \mathfrak{g}/\mathfrak{h}%
\right)  _{+})$ is a Lie algebra. Noting that if $\left(  \mathfrak{g}%
/\mathfrak{h}\right)  _{-}$ is the $-{\bf{i}}$ eigenspace for $J$ and if
$p(x)=u\in\left(  \mathfrak{g}/\mathfrak{h}\right)  _{+}$ and $p(y)=v\in
\left(  \mathfrak{g}/\mathfrak{h}\right)  _{-}$ then taking $x^{\prime}%
={\bf{i}}x, y^{\prime}=-{\bf{i}}y$ we have
\begin{eqnarray*}
N(u,v)&=&p[x,y]+J({\bf{i}}p([x,y]-[x,y]))-p[x,y]\\
&=&0.
\end{eqnarray*}
If $u,v\in\left(  \mathfrak{g}/\mathfrak{h}\right)  _{-}$ since
$\left(  \mathfrak{g}/\mathfrak{h}\right)  _{-}$ is the complex conjugate of
$ \left(  \mathfrak{g}/\mathfrak{h}\right)  _{+}$ the above proves the same conclusion. Finally,
since \[
\left(  \mathfrak{g}/\mathfrak{h}\right)  _{\mathbb{C}}=\left(  \mathfrak{g}%
/\mathfrak{h}\right)  _{+}\oplus\left(  \mathfrak{g}/\mathfrak{h}\right)  _{-}%
\]
we complete the proof of the sufficiency, hence the claimed result.
\end{proof}

When $\mathfrak{g}$ and $\mathfrak{h}$ are the Lie algebra of corresponding
Lie group $G$, and a closed subgroup $H\subset G$, this Lemma was originally proved in  Koszul \cite{Kos}.

\begin{lemma}
\label{lmm:13} Let $J$ be an $\mathfrak{h}$--invariant, integrable complex
structure on $\mathfrak{g}/\mathfrak{h}$ then $p^{-1}((\mathfrak{g}%
/\mathfrak{h)}_{+})\cap\mathfrak{g=\mathfrak{h}}$.
\end{lemma}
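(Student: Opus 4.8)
The plan is to analyze the real subspace $\mathfrak{a} := p^{-1}((\mathfrak{g}/\mathfrak{h})_{+})\cap\mathfrak{g}$ and show it cannot be strictly larger than $\mathfrak{h}$. First I would observe the elementary inclusions: $\mathfrak{h}\subset p^{-1}((\mathfrak{g}/\mathfrak{h})_{+})$ always (since $p(\mathfrak{h})=0\in(\mathfrak{g}/\mathfrak{h})_{+}$), and $\mathfrak{h}\subset\mathfrak{g}$, so $\mathfrak{h}\subset\mathfrak{a}$; the content is the reverse inclusion. So take $x\in\mathfrak{g}$ (a \emph{real} element) with $p(x)\in(\mathfrak{g}/\mathfrak{h})_{+}$, i.e.\ $J p(x) = \mathbf{i}\, p(x)$. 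The key point is that since $x$ is real, applying complex conjugation on $\mathfrak{g}_{\mathbb{C}}$ (which fixes $\mathfrak{g}$ pointwise and carries $(\mathfrak{g}/\mathfrak{h})_{+}$ to $(\mathfrak{g}/\mathfrak{h})_{-}$) shows $p(x)=\overline{p(x)}\in(\mathfrak{g}/\mathfrak{h})_{-}$ as well. Hence $p(x)\in(\mathfrak{g}/\mathfrak{h})_{+}\cap(\mathfrak{g}/\mathfrak{h})_{-}=\{0\}$ inside $(\mathfrak{g}/\mathfrak{h})_{\mathbb{C}}$, which forces $p(x)=0$, i.e.\ $x\in\mathfrak{h}$.

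Spelling this out a little: one has the direct sum decomposition $(\mathfrak{g}/\mathfrak{h})_{\mathbb{C}}=(\mathfrak{g}/\mathfrak{h})_{+}\oplus(\mathfrak{g}/\mathfrak{h})_{-}$ recorded just before Lemma \ref{lmm:12}, together with the fact that the conjugation $\sigma$ of $(\mathfrak{g}/\mathfrak{h})_{\mathbb{C}}=(\mathfrak{g}/\mathfrak{h})\otimes\mathbb{C}$ with respect to the real form $\mathfrak{g}/\mathfrak{h}$ interchanges the $\pm\mathbf{i}$ eigenspaces of $J$ (because $J$ is real on $\mathfrak{g}/\mathfrak{h}$, so $J\sigma=\sigma J$, and $\sigma$ sends an $\mathbf{i}$-eigenvector to a $(-\mathbf{i})$-eigenvector). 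Since $p$ is the complexification of the real map $\mathfrak{g}\to\mathfrak{g}/\mathfrak{h}$, it intertwines the conjugation of $\mathfrak{g}_{\mathbb{C}}$ with $\sigma$; a real element $x\in\mathfrak{g}$ is fixed by the conjugation, so $p(x)$ is fixed by $\sigma$. If in addition $p(x)\in(\mathfrak{g}/\mathfrak{h})_{+}$, then $p(x)=\sigma(p(x))\in(\mathfrak{g}/\mathfrak{h})_{-}$, whence $p(x)$ lies in the intersection, which is zero. Therefore $x\in\ker p=\mathfrak{h}$.

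Note that integrability of $J$ is not actually needed for this particular statement — only that $J$ is an $\mathfrak{h}$-invariant (almost) complex structure, so that $(\mathfrak{g}/\mathfrak{h})_{\pm}$ and the eigenspace decomposition make sense — so the hypothesis is stronger than required and the proof is purely linear algebra on the complexification. The only thing to be careful about is the bookkeeping identifying $(\mathfrak{g}/\mathfrak{h})_{\mathbb{C}}$ with $\mathfrak{g}_{\mathbb{C}}/\mathfrak{h}_{\mathbb{C}}$ (done in the remark after Definition \ref{NDef}) and confirming that the extended $p$ is compatible with the two conjugations and with $J$; once that is in place there is no real obstacle. I would present it in three short lines: (1) $\mathfrak{h}\subset\mathfrak{a}$ is immediate; (2) conjugation swaps $(\mathfrak{g}/\mathfrak{h})_{+}$ and $(\mathfrak{g}/\mathfrak{h})_{-}$ and fixes the image of real elements; (3) hence $p(x)\in(\mathfrak{g}/\mathfrak{h})_{+}\cap(\mathfrak{g}/\mathfrak{h})_{-}=\{0\}$, giving $\mathfrak{a}\subset\mathfrak{h}$.
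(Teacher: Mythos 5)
Your proof is correct and is essentially the paper's argument: the paper simply notes that $p(x)\in(\mathfrak{g}/\mathfrak{h})_{+}\cap(\mathfrak{g}/\mathfrak{h})=\{0\}$ for real $x$, invoking the fact $V_{+}\cap V=\{0\}$ recorded at the start of Section 2, which is exactly the conjugation observation you spell out (and you are right that integrability is not used).
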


\begin{proof}
Suppose that $x\in\mathfrak{g}_{\mathbb{C}}$ and $p(x)\in(\mathfrak{g}/\mathfrak{h)}_{+} $
then $p(x)\in(\mathfrak{g}/\mathfrak{h)}_{+}\cap(\mathfrak{g}/\mathfrak{h)}=0
$. Thus $x\in\mathfrak{h}$.
\end{proof}

 The above result can also be found in Theorem 3.4  of Chapter 3 of \cite{Mura}. One can easily deduce that
 \begin{eqnarray}\label{eq:Kos1}
 \mathfrak{g}_{\mathbb{C}}&=&\mathfrak{g}+p^{-1}\left(  (\mathfrak{g}/\mathfrak{h)}_{+}\right), \\ \mathfrak{g}_{\mathbb{C}}&=& p^{-1}\left((\mathfrak{g}/\mathfrak{h)}_{+}\right)+\overline{p^{-1}\left((\mathfrak{g}/\mathfrak{h)}_{+}\right)},\label{eq:Kos2}\\ \mathfrak{h}_{\mathbb{C}}&=&p^{-1}\left((\mathfrak{g}/\mathfrak{h)}_{+}\right)\cap \overline{p^{-1}\left((\mathfrak{g}/\mathfrak{h)}_{+}\right)}.\label{eq:Kos3}
 \end{eqnarray}

 We now assume that $\mathfrak{g}$ has a $\mathfrak{g}$--invariant inner
product $\left\langle \cdot, \cdot\right\rangle,$ which is equivalent to that
$\mathfrak{g}$ is a compact Lie algebra. If $\mathfrak{g}$ is a Lie algebra
then we set $\mathfrak{c}_{\mathfrak{g}}$ equal to the center of
$\mathfrak{g}$.

For a subalgebra $\mathfrak{h}$ of $\mathfrak{g}$ let
$\mathfrak{k=[\mathfrak{h}},\mathfrak{h}]$. Then $\mathfrak{k}$ is compact
semisimple and $\mathfrak{h}=\mathfrak{k}\oplus\mathfrak{c}_{\mathfrak{h}}$.

Let $G_{\mathbb{C}}^{\prime}$ denote a connected, simply connected, Lie group
with Lie algebra $[\mathfrak{g},\mathfrak{g}]$ and let $\Gamma$ be a lattice
in $\mathfrak{c}_{\mathfrak{g}}$ (that is a discrete co-compact subgroup of
$\mathfrak{c}_{\mathfrak{g}}$ under addition). Set $G_{\mathbb{C},\Gamma
}=\left(  \mathfrak{c}_{\mathfrak{g}}\right)  _{\mathbb{C}}/\Gamma\times
G_{\mathbb{C}}^{\prime}$. Weyl's Theorem implies that the connected subgroup,
$G^{\prime}$, of $G_{\mathbb{C}}^{\prime}$ corresponding to $[\mathfrak{g}%
,\mathfrak{g}]$ is compact. Set $G_{\Gamma}=\mathfrak{c}_{\mathfrak{g}}%
/\Gamma\times G^{\prime}\subset G_{\mathbb{C},\Gamma}$. Then $G_{\Gamma}$ is a
maximal compact subgroup of $G_{\mathbb{C},\Gamma}$. For the rest of this
section $\Gamma$ will be fixed and we will write $G=G_{\Gamma}$ and
$G_{\mathbb{C}}=G_{\mathbb{C},\Gamma}$.

The next result is critical to our approach.
\begin{proposition}
\label{prop:11} Let $J$ be an $\mathfrak{h}$--invariant, integrable, complex
structure on $\mathfrak{g}/\mathfrak{h}$. Then, setting $\mathfrak{l}%
=p^{-1}\left(  (\mathfrak{g}/\mathfrak{h)}_{+}\right)  $ and $L$ the connected
subgroup of $G_{\mathbb{C}}$ with Lie algebra $\mathfrak{l}$, we have $G_{\mathbb{C}}=\bar
{L}G$ (here $\bar{L}$ is the closure of $L$ in $G_{\mathbb{C}}$).
\end{proposition}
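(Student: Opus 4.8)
The plan is to show that $\bar{L}G$ is an open subgroup of $G_{\mathbb{C}}$ and then invoke connectedness. First I would use \eqref{eq:Kos1}, which says $\mathfrak{g}_{\mathbb{C}} = \mathfrak{g} + \mathfrak{l}$ with $\mathfrak{l} = p^{-1}((\mathfrak{g}/\mathfrak{h})_+)$; this is the infinitesimal version of the orbit being open. Concretely, the map $G \times L \to G_{\mathbb{C}}$, $(g,\ell) \mapsto g\ell$, has differential at the identity whose image is $\mathfrak{g} + \mathfrak{l} = \mathfrak{g}_{\mathbb{C}}$, so by the implicit function theorem the set $GL$ contains a neighborhood of the identity in $G_{\mathbb{C}}$. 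Since $G$ is a compact group acting on the left, $GL = \bigcup_{g\in G} gL$ and one checks it is a union of translates of an open set hence open; more precisely $GL = G \cdot (GL)$ and left translation by elements of $G_{\mathbb{C}}$ is a homeomorphism, so $GL$ is open in $G_{\mathbb{C}}$. Therefore $\overline{GL}$ is the closure of an open set.

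The next step is to pass from the set $GL$ to the subgroup $\bar{L}G$. I would argue that $\bar{L}G$ is a subgroup: $\bar L$ is a closed connected subgroup of $G_{\mathbb{C}}$ (the closure of a connected subgroup is a connected subgroup — this is standard, e.g. the closure of a one-parameter-generated subgroup, and more generally $\bar L$ is a Lie subgroup since $G_{\mathbb{C}}$ is a Lie group and $\bar L$ is a closed subgroup), and since $G$ normalizes $\mathfrak{g}$ but not necessarily $\mathfrak{l}$, one must be a bit careful; however, what we really want is just that $\bar{L}G$ is open. Since $\bar{L}G \supset LG$, and $LG$ is open (by the same differential computation as above, or by noting $LG = (GL^{-1})^{-1}$ together with $L^{-1}$ being a connected subgroup with Lie algebra... — actually cleaner: the map $L \times G \to G_{\mathbb{C}}$ has surjective differential $\mathfrak{l} + \mathfrak{g} = \mathfrak{g}_{\mathbb{C}}$ at the identity, so its image contains an identity neighborhood, and since the image is stable under right multiplication by $G$ and left multiplication by $L$, translating this neighborhood around shows $LG$ is open). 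Hence $\bar{L}G$ is open in $G_{\mathbb{C}}$.

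Finally, $\bar{L}G$ is open, and its complement in $G_{\mathbb{C}}$ is a union of cosets of $\bar{L}G$ (if $\bar L G$ were a subgroup) — but even without knowing it is a subgroup, here is the clean finish: $\overline{LG}$ is closed, and it contains the open set $LG$. I claim $\overline{LG} = \bar{L}\,\bar{G} \cdot (\text{stuff})$... Let me instead argue directly. The set $LG$ is open; I will show it is also closed. Its complement is $\bigcup_{x \notin LG} x\cdot(\text{?})$. The robust approach: let $S = \overline{LG}$. Because $LG$ is open and $G_{\mathbb{C}}$ is connected, it suffices to show $LG$ is closed, equivalently that $S \subseteq LG$ is impossible to fail, i.e. $S = LG$. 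Take $x \in S$; then $xG^{-1}=xG$ meets $\overline{LG}\cdot G = \overline{LG}$, and since $LG$ is open and dense in $S$... Actually the cleanest statement: since $LG$ is open, $G_{\mathbb{C}} \setminus \overline{LG}$ is open, and $\overline{LG}\setminus LG$ has empty interior; if this frontier were nonempty we would still not get a contradiction from connectedness alone. So the right move is: $\overline{L}G$ is both open and closed. It is open as shown. It is closed because $\bar L$ is closed and $G$ is compact, so $\bar{L} \cdot G$ is the image of the compact-fibered... — precisely, $\bar{L}G$ is closed since $G$ is compact: if $\ell_n g_n \to y$ with $\ell_n \in \bar L$, $g_n \in G$, pass to a subsequence with $g_n \to g \in G$, then $\ell_n = \ell_n g_n g_n^{-1} \to y g^{-1}$, and $yg^{-1} \in \bar L$ since $\bar L$ is closed, so $y = (yg^{-1})g \in \bar{L}G$. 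Thus $\bar{L}G$ is a nonempty clopen subset of the connected group $G_{\mathbb{C}}$, hence $\bar{L}G = G_{\mathbb{C}}$.

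\textbf{Main obstacle.} The principal subtlety is the openness of $\bar{L}G$ (equivalently of $LG$): it relies essentially on the surjectivity $\mathfrak{g} + \mathfrak{l} = \mathfrak{g}_{\mathbb{C}}$ from \eqref{eq:Kos1}, which in turn uses integrability of $J$ via Lemma~\ref{lmm:12} and Lemma~\ref{lmm:13}; and one must handle the passage from the smooth set $LG$ to the closed set $\bar{L}G$ without assuming $L$ is closed or that $G$ normalizes $\mathfrak{l}$. The compactness of $G$ is what makes $\bar{L}G$ closed, which is exactly where Weyl's theorem (guaranteeing $G$ is compact) enters, consistent with the paper's stated philosophy.
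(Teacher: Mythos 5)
Your argument is correct and is essentially the paper's own proof: the surjectivity $\mathfrak{g}+\mathfrak{l}=\mathfrak{g}_{\mathbb{C}}$ (equivalently the dimension count plus Lemma~\ref{lmm:13}) gives an identity neighborhood $U\subset LG$, compactness of $G$ together with closedness of $\bar{L}$ gives that $\bar{L}G$ is closed, and connectedness of $G_{\mathbb{C}}$ finishes. The one wobble is the inference ``$\bar{L}G\supset LG$ and $LG$ is open, hence $\bar{L}G$ is open'' --- containing an open set is not the same as being open --- but your own translation trick repairs this in one line, since $\bar{L}G=\bigcup_{h\in\bar{L},\,g\in G}hUg$, which is exactly how the paper phrases it.
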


\begin{proof}
Let $n=\dim\mathfrak{g}$, $h=\dim\mathfrak{h}$. We have the exact sequence
\[
0\rightarrow\mathfrak{h}_{\mathbb{C}}\rightarrow\mathfrak{l}\rightarrow
(\mathfrak{g}/\mathfrak{h)}_{+}\rightarrow0.
\]
Thus
\begin{eqnarray*}
\dim_{\mathbb{C}}\mathfrak{l}&=&\frac{\dim_{\mathbb{R}}\left(  \mathfrak{g}%
/\mathfrak{h}\right)  }{2}+\dim_{\mathbb{R}}\mathfrak{h}\\
&=&\frac{n-h}{2}
+h\\
&=&\frac{n+h}{2}.
\end{eqnarray*}
Hence
\begin{eqnarray*}
\dim_{\mathbb{R}}\mathfrak{l}+\dim_{\mathbb{R}}\mathfrak{g}&=&2n+h\\&\geq&
\dim_{\mathbb{R}}\mathfrak{g}_{\mathbb{C}}.
\end{eqnarray*}

This implies that $\overline{L}G$ is open in $G_{\mathbb{C}}$. Indeed, the
inverse function theorem implies that $LG$ contains $U$, an open neighborhood
of the identity. Thus, since
\[
\overline{L}G=\cup_{h\in\overline{L},g\in G}hUg,
\]
it is an open subset of $G_{\mathbb{C}}$. On the other hand $G$ is compact
so $\overline{L}G$ is closed.
\end{proof}

\begin{proposition}
\label{prop:12} Let $J$ be an $\mathfrak{h}$--invariant, integrable, complex
structure on $\mathfrak{g}/\mathfrak{h}$ and let $\mathfrak{l}$ and $\bar{L}$
be as in the previous proposition. If $\mathfrak{v}$ is a real subalgebra of
$\mathfrak{l} $ such that the connected subgroup of $G_{\mathbb{C}}$ with Lie
algebra $\mathfrak{v}$ is compact then there exists $h\in\overline{L}$ such
that $\mathfrak{v}\subset Ad(h)\mathfrak{h}$.
\end{proposition}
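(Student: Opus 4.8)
The plan is to use Proposition~\ref{prop:11} together with the conjugacy of maximal compact subgroups (Cartan's Theorem). First I would let $V$ denote the connected subgroup of $G_{\mathbb{C}}$ with Lie algebra $\mathfrak{v}$; by hypothesis $V$ is compact. Since $V \subset L \subset \overline{L}$, the group $V$ is a compact subgroup of $\overline{L}G$. By Proposition~\ref{prop:11}, $G_{\mathbb{C}} = \overline{L}G$, so $V$ is a compact subgroup of $G_{\mathbb{C}}$. Now $G$ is a maximal compact subgroup of $G_{\mathbb{C}}$ (this was established in the paragraph preceding Proposition~\ref{prop:11}), so the compact group $V$ is contained in some maximal compact subgroup, which by Cartan's Theorem is conjugate to $G$: there is $g \in G_{\mathbb{C}}$ with $V \subset gGg^{-1}$, equivalently $g^{-1}Vg \subset G$.

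The next step is to promote this to a containment at the level of $\overline{L}$. Using $G_{\mathbb{C}} = \overline{L}G$ again, write $g = h_0 k$ with $h_0 \in \overline{L}$ and $k \in G$. Then $V \subset gGg^{-1} = h_0 k G k^{-1} h_0^{-1} = h_0 G h_0^{-1}$, since $kGk^{-1} = G$. Thus $h_0^{-1} V h_0 \subset G$, i.e.\ after replacing $g$ by $h_0$ we may take the conjugating element to lie in $\overline{L}$. Differentiating the inclusion $h_0^{-1} V h_0 \subset G$ at the identity gives $\mathrm{Ad}(h_0^{-1})\mathfrak{v} \subset \mathfrak{g}$, i.e.\ $\mathfrak{v} \subset \mathrm{Ad}(h_0)\mathfrak{g}$. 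The remaining task is to sharpen $\mathfrak{g}$ to $\mathfrak{h}$ on the right-hand side: we know $\mathfrak{v} \subset \mathfrak{l} = p^{-1}((\mathfrak{g}/\mathfrak{h})_+)$, and $\mathfrak{l}$ is $\overline{L}$-stable (being the Lie algebra of the normalizer-related subgroup; more simply, $\overline{L}$ normalizes its own Lie algebra and hence $\mathrm{Ad}(h_0)\mathfrak{l} = \mathfrak{l}$). Therefore $\mathfrak{v} \subset \mathrm{Ad}(h_0)\mathfrak{g} \cap \mathfrak{l} = \mathrm{Ad}(h_0)\mathfrak{g} \cap \mathrm{Ad}(h_0)\mathfrak{l} = \mathrm{Ad}(h_0)(\mathfrak{g} \cap \mathfrak{l})$. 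By Lemma~\ref{lmm:13}, $\mathfrak{g} \cap \mathfrak{l} = \mathfrak{g} \cap p^{-1}((\mathfrak{g}/\mathfrak{h})_+) = \mathfrak{h}$, so $\mathfrak{v} \subset \mathrm{Ad}(h_0)\mathfrak{h}$, which is the claim with $h = h_0$.

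I expect the main obstacle to be the second step: ensuring that the conjugating element produced by Cartan's Theorem can be taken in $\overline{L}$ rather than in all of $G_{\mathbb{C}}$. This is where the decomposition $G_{\mathbb{C}} = \overline{L}G$ from Proposition~\ref{prop:11} is essential, and one must be careful that the $G$-factor genuinely drops out because $kGk^{-1} = G$ for $k \in G$. A minor technical point to verify is that $\overline{L}$ really does stabilize $\mathfrak{l}$ under the adjoint action: $L$ is connected with Lie algebra $\mathfrak{l}$, so $\mathrm{Ad}(L)\mathfrak{l} = \mathfrak{l}$, and since $\mathrm{Ad}$ is continuous and the stabilizer of the subspace $\mathfrak{l}$ in $GL(\mathfrak{g}_{\mathbb{C}})$ is closed, this extends to $\overline{L}$. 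With these two points secured, the argument is a clean chain of inclusions terminating in Lemma~\ref{lmm:13}.
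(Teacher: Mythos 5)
Your proof is correct and follows essentially the same route as the paper: embed the compact group $V$ in a maximal compact subgroup, use Cartan's conjugacy theorem together with the decomposition $G_{\mathbb{C}}=\overline{L}G$ from Proposition~\ref{prop:11} to take the conjugating element in $\overline{L}$, and then intersect with $\mathfrak{l}$ and invoke Lemma~\ref{lmm:13}. Your explicit verification that $\mathrm{Ad}(\overline{L})$ stabilizes $\mathfrak{l}$ is a small point the paper leaves implicit, but otherwise the arguments coincide.
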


\begin{proof}
Let $R$ denote the connected subgroup of $G_{\mathbb{C}}$ with Lie algebra
$\mathfrak{v}$. Then there exists, $U$, a maximal compact subgroup of
$G_{\mathbb{C}}$ with $R\subset U$. The conjugacy theorem for maximal compact
subgroups implies that there exists $g\in G_{\mathbb{C}}$ such that
$U=gGg^{-1}$. Proposition \ref{prop:11} implies that $g=hu$ with
$h\in\overline{L}$ and $u\in G$ thus $U=hGh^{-1}$. Now noting that
$Ad(h)\mathfrak{l}=\mathfrak{l}$, we have
\begin{eqnarray*}
\mathfrak{v}&\subset& \mathfrak{l}\cap Lie(U)\\
&=&\mathfrak{l}\cap Ad(h)\mathfrak{g}\\
&=&Ad(h)(\mathfrak{l}\cap\mathfrak{g})\\
&=&Ad(h)\mathfrak{h}.
\end{eqnarray*}
Here we have used Lemma \ref{lmm:13}, namely $\mathfrak{l}\cap\mathfrak{g}=\mathfrak{h}$.
\end{proof}

\begin{corollary}
\label{coro:11} Notation as in the previous proposition. Then $\mathfrak{l}$ has a
Levi decomposition as  $\mathfrak{l}=\mathfrak{r}\oplus\mathfrak{k}_{\mathbb{C}}$ (here
$\mathfrak{r}$ is the radical of $\mathfrak{l}$). Recall that
$\mathfrak{k}_{\mathbb{C}}$ is semisimple since $\mathfrak{k}=[\mathfrak{h}, \mathfrak{h}]$ is semisimple.
\end{corollary}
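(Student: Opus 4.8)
The plan is to produce a Levi decomposition of $\mathfrak{l}$ whose semisimple part is exactly $\mathfrak{k}_{\mathbb{C}}$, where $\mathfrak{k}=[\mathfrak{h},\mathfrak{h}]$. First I would recall the two facts already in hand: $\mathfrak{h}_{\mathbb{C}}\subset\mathfrak{l}$ (from the exact sequence $0\to\mathfrak{h}_{\mathbb{C}}\to\mathfrak{l}\to(\mathfrak{g}/\mathfrak{h})_{+}\to 0$ in Proposition \ref{prop:11}), and $\mathfrak{h}=\mathfrak{k}\oplus\mathfrak{c}_{\mathfrak{h}}$ with $\mathfrak{k}=[\mathfrak{h},\mathfrak{h}]$ compact semisimple (stated just before $G_{\mathbb{C}}'$ is introduced). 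Complexifying, $\mathfrak{k}_{\mathbb{C}}$ is a semisimple subalgebra of $\mathfrak{l}$. Since any finite-dimensional Lie algebra over a field of characteristic zero has a Levi decomposition (Levi--Malcev), $\mathfrak{l}=\mathfrak{r}\oplus\mathfrak{s}$ with $\mathfrak{r}$ the radical and $\mathfrak{s}$ a maximal semisimple subalgebra; the content of the corollary is that one may take $\mathfrak{s}=\mathfrak{k}_{\mathbb{C}}$.

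The key step is to show $\mathfrak{k}_{\mathbb{C}}$ is a maximal semisimple subalgebra of $\mathfrak{l}$, equivalently that the semisimple part of $\mathfrak{l}$ has dimension $\dim_{\mathbb{C}}\mathfrak{k}$. Here is where I would invoke Proposition \ref{prop:12}: let $\mathfrak{s}$ be a Levi factor of $\mathfrak{l}$, and let $S$ be the connected subgroup of $G_{\mathbb{C}}$ with Lie algebra $\mathfrak{s}$; since $\mathfrak{s}$ is semisimple, $S$ is a semisimple Lie group, but that alone does not make it compact. Instead I would pick a maximal compact subalgebra $\mathfrak{v}\subset\mathfrak{s}$ — concretely, choose a Cartan involution of $\mathfrak{s}$ and let $\mathfrak{v}$ be the fixed-point subalgebra, which is a maximal compactly embedded subalgebra; the associated connected subgroup of $G_{\mathbb{C}}$ is compact. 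By Proposition \ref{prop:12} there is $h\in\overline{L}$ with $\mathfrak{v}\subset\operatorname{Ad}(h)\mathfrak{h}$. Applying $\operatorname{Ad}(h^{-1})$ (which preserves $\mathfrak{l}$ and hence preserves the radical, so carries Levi factors to Levi factors) reduces us to $\mathfrak{v}\subset\mathfrak{h}$, so $[\mathfrak{v},\mathfrak{v}]\subset[\mathfrak{h},\mathfrak{h}]=\mathfrak{k}$. But $\mathfrak{v}$ is the maximal compact of the semisimple $\mathfrak{s}$, so the real form of $\mathfrak{s}$ is recovered from $\mathfrak{v}$; comparing complex dimensions, $\dim_{\mathbb{C}}\mathfrak{s}=\dim_{\mathbb{R}}\mathfrak{v}$ when $\mathfrak{s}$ (viewed inside $\mathfrak{g}_{\mathbb{C}}$) is the complexification of a compact real form, which is forced since $\mathfrak{v}\subset\mathfrak{k}\subset\mathfrak{g}$ and $[\mathfrak{v},\mathfrak{v}]$ together with the bracket structure pins $\mathfrak{s}=\mathfrak{v}_{\mathbb{C}}$; then $\dim_{\mathbb{C}}\mathfrak{s}=\dim_{\mathbb{R}}\mathfrak{v}\le\dim_{\mathbb{R}}\mathfrak{k}=\dim_{\mathbb{C}}\mathfrak{k}_{\mathbb{C}}$. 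Combined with $\mathfrak{k}_{\mathbb{C}}\subset\mathfrak{l}$ semisimple and the maximality of Levi factors, this gives $\dim_{\mathbb{C}}\mathfrak{s}=\dim_{\mathbb{C}}\mathfrak{k}_{\mathbb{C}}$, so $\mathfrak{k}_{\mathbb{C}}$ is itself a Levi factor and $\mathfrak{l}=\mathfrak{r}\oplus\mathfrak{k}_{\mathbb{C}}$.

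The main obstacle I anticipate is the dimension bookkeeping in the previous paragraph: one must be careful that the semisimple subalgebra $\mathfrak{v}_{\mathbb{C}}$ produced from $\mathfrak{v}\subset\mathfrak{h}$ is genuinely a Levi factor of $\mathfrak{l}$ of the same dimension as $\mathfrak{s}$ — i.e. that passing through a maximal compact subalgebra and back does not lose dimension. This is true precisely because a complex semisimple Lie algebra is the complexification of its compact real form, which has the same real dimension as the complex dimension of the original; but writing this cleanly requires knowing that the subgroup of $G_{\mathbb{C}}$ attached to $\mathfrak{s}$ is reductive (so that $\mathfrak{s}$ sits inside $\mathfrak{g}_{\mathbb{C}}$ as the complexification of $\mathfrak{s}\cap\operatorname{Ad}(g)\mathfrak{g}$ for suitable $g$), and this is again where Cartan's conjugacy theorem and Proposition \ref{prop:12} do the work. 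Once that is in place, the rest is the standard Levi--Malcev statement, and the final sentence of the corollary ("$\mathfrak{k}_{\mathbb{C}}$ is semisimple since $\mathfrak{k}=[\mathfrak{h},\mathfrak{h}]$ is semisimple") is immediate from the structure $\mathfrak{h}=\mathfrak{k}\oplus\mathfrak{c}_{\mathfrak{h}}$ recorded above.
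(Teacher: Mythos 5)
Your argument is correct and follows essentially the same route as the paper: take a Levi factor of $\mathfrak{l}$, pass to a maximal compact subalgebra, conjugate it into $\mathfrak{h}$ via Proposition \ref{prop:12} (using Weyl's theorem for compactness of the associated subgroup), and use perfectness of the compact semisimple piece to land it in $\mathfrak{k}=[\mathfrak{h},\mathfrak{h}]$. The only cosmetic difference is your final dimension count identifying $\mathfrak{k}_{\mathbb{C}}$ as a Levi factor, where the paper instead concludes $\mathfrak{z}=Ad(h)\mathfrak{k}_{\mathbb{C}}$ by maximality and then applies $Ad(h^{-1})$, which preserves $\mathfrak{l}$ and $\mathfrak{r}$.
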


\begin{proof}
Let $\mathfrak{r}$ be the radical of $\mathfrak{l}$ and let $\mathfrak{z}$ be
a semisimple, complex subalgebra of $\mathfrak{l}$ such that $\mathfrak{l}%
=\mathfrak{r}\oplus\mathfrak{z}$ (i.e. a Levi decomposition of $\mathfrak{l}
$). We assert that \ there exists $h\in\overline{L}$ such that $\mathfrak{z}%
\subset Ad(h)\mathfrak{k}_{\mathbb{C}}$. Indeed, let $\mathfrak{z}_{c}$ be a
maximal compact subalgebra of $\mathfrak{z}$. Then since $\mathfrak{z}$ is
semisimple, the connected subgroup of $G_{\mathbb{C}}$ corresponding to
$\mathfrak{z}_{c}$ is compact hence the preceding proposition implies that
there exists $h\in\overline{L}$ such that $\mathfrak{z}_{c}\subset
Ad(h)\mathfrak{h}$. \ But since $\mathfrak{z}_{c}$ is semisimple,
$\mathfrak{z}_{c}=[\mathfrak{z}_{c},\mathfrak{z}_{c}]$. Thus, $\mathfrak{z}%
\subset Ad(h)\mathfrak{k}_{\mathbb{C}}$. But $\mathfrak{z}$ is maximal among
the semisimple subalgebras of $\mathfrak{l}$. Hence, $\mathfrak{z}%
=Ad(h)\mathfrak{k}_{\mathbb{C}}$. This implies the corollary since
$Ad(h)\mathfrak{r}=\mathfrak{r}$ and $Ad(h)\mathfrak{l}=\mathfrak{l}$.
\end{proof}

In the case that $\mathfrak{h}=\{0\}$, $\mathfrak{l}$ (=$\mathfrak{r}$) is the Samelson algebra $\mathfrak{s}$ in \cite{Pittie},
where he proves its solvability via a topological argument.  The above  lemma gives an alternate proof of this Pittie's result as a special case.

We continue with $\mathfrak{g,\mathfrak{h}}$ and $J$, an integrable
$\mathfrak{h}$--invariant complex structure on $\mathfrak{g}%
/\mathfrak{\mathfrak{h}}$, as above. Let $\mathfrak{b}_{3}$ be a Borel subalgebra of
$\mathfrak{k}_{\mathbb{C}}$ then $\mathfrak{b}_{1}=\mathfrak{b}_{3}%
\oplus\mathfrak{r}$ is a Borel subalgebra of $p^{-1}(\mathfrak{g}%
/\mathfrak{\mathfrak{h)}}_{+}$. We note that $\left(  \mathfrak{c}%
_{\mathfrak{h}}\right)  _{\mathbb{C}}\subset\mathfrak{r}$ and that
$\mathfrak{r=}\left(  \mathfrak{c}_{\mathfrak{h}}\right)  _{\mathbb{C}}%
\oplus\mathfrak{r}^{\prime}$ a direct sum of Lie algebras. Also,
$\mathfrak{b}_{2}=\left(  \mathfrak{c}_{\mathfrak{h}}\right)  _{\mathbb{C}%
}\oplus\mathfrak{b}_{3}$ is a Borel subalgebra of $\mathfrak{h}_{\mathbb{C}}$
and
\begin{equation}\label{eq:borelofl}
\mathfrak{b}_{1}\mathfrak{=\mathfrak{b}}_{2}\oplus\mathfrak{r}^{\prime}%
\end{equation}
with $\mathfrak{r}^{\prime}$ a subalgebra of $\mathfrak{r}$. Let
$\mathfrak{b}$ be a Borel subalgebra of $\mathfrak{g}_{\mathbb{C}}$ such that
$\mathfrak{b}\supset\mathfrak{b}_{1}$. Note
$$
\mathfrak{b}_3 (\subset \mathfrak{k}_{\mathbb{C}})\subset \mathfrak{b}_2(\subset \mathfrak{h}_\mathbb{C})\subset \mathfrak{b}_1 (\subset \mathfrak{l}).
$$
From (\ref{eq:Kos2}) and (\ref{eq:Kos3}) we have that
\begin{equation}\label{eq:dim1}
\dim_{\mathbb{C}}(\mathfrak{l})=\frac{\dim (\mathfrak{g})+\dim(\mathfrak{h})}{2}.
\end{equation}
Since from $\mathfrak{r=}\left(  \mathfrak{c}_{\mathfrak{h}}\right)  _{\mathbb{C}}%
\oplus\mathfrak{r}^{\prime}$  and $\mathfrak{l}=\mathfrak{r}\oplus \mathfrak{k}_\mathbb{C}$
\begin{eqnarray*}
\dim_\mathbb{C}(\mathfrak{r}^{\prime})&=&\dim_\mathbb{C}(\mathfrak{r})-
\dim_\mathbb{C}((\mathfrak{c}_\mathfrak{h})_\mathbb{C}), \mbox{ and } \\
\dim_\mathbb{C}(\mathfrak{r})&=&\dim_\mathbb{C}(\mathfrak{l})-\dim_\mathbb{C}(\mathfrak{k}_\mathbb{C})
\end{eqnarray*}
we have,  via (\ref{eq:dim1}), that
\begin{eqnarray}
\dim_\mathbb{C}(\mathfrak{r}^{\prime})&=&\dim_\mathbb{C}(\mathfrak{l})-\dim_{\mathbb{C}}(\mathfrak{h}_\mathbb{C})\nonumber \\
&=&
\frac{\dim (\mathfrak{g})-\dim(\mathfrak{h})}{2}.\label{eq:dim2}
\end{eqnarray}

If $\mathfrak{z}$ is a reductive Lie algebra over $\mathbb{C}$ and
$\mathfrak{v}$ is a Cartan subalgebra of $\mathfrak{z}$ then $\Phi
(\mathfrak{z,v)}$ will denote its root system. We continue with the situation above.

\begin{proposition}
\label{prop:13} Let $\mathfrak{b},\mathfrak{b}_{1}%
,\mathfrak{b}_{2}$ be as above and $\mathfrak{a=\mathfrak{b\cap\mathfrak{g}}}$
\ and $\mathfrak{a}_{2}=\mathfrak{b}_{2}\cap\mathfrak{h}$ then $\mathfrak{a}$
is a Cartan subalgebra of $\mathfrak{g}$, and $\mathfrak{a}_{2}$ is a Cartan
subalgebra of $\mathfrak{h}$ which  also equals to $\mathfrak{a\cap\mathfrak{h}}$.
Set $\mathfrak{u}$ equal to a complement to $\mathfrak{a\cap\mathfrak{h}}$ in
$\mathfrak{a.}$ Then $\mathfrak{a}_{\mathbb{C}}=\mathfrak{u\oplus
a}_{\mathbb{C}}\cap\mathfrak{b}_{1}$ as a real vector space. If $\alpha\in
\Phi(\mathfrak{g}_{\mathbb{C}},\mathfrak{a}_{\mathbb{C}})$ and satisfies $\alpha
_{|\mathfrak{a}_{\mathbb{C}}\cap\mathfrak{b}_{1}}=0$ then $\alpha=0$. Moreover  if
$\alpha,\beta\in\Phi(\mathfrak{g}_{\mathbb{C}},\mathfrak{a}_{\mathbb{C}})$ are
such that
\[
\alpha_{|\mathfrak{a}_{\mathbb{C}}\cap\mathfrak{b}_{1}}=\beta_{|\mathfrak{a}%
_{\mathbb{C}}\cap\mathfrak{b}_{1}}%
\]
then $\alpha=\beta$.
\end{proposition}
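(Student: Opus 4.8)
The plan is to reduce the whole proposition to a single dimension count. Let $\sigma$ be complex conjugation of $\mathfrak{g}_{\mathbb{C}}$ relative to $\mathfrak{g}$, so $\sigma(\mathfrak{l})=\overline{\mathfrak{l}}$ and $\sigma(\mathfrak{b}_1)=\overline{\mathfrak{b}_1}$, its restriction to $\mathfrak{h}_{\mathbb{C}}$ being conjugation relative to $\mathfrak{h}$. First I would settle the Cartan--subalgebra assertions via the standard fact that, for a compact Lie algebra $\mathfrak{z}$ and a Borel subalgebra $\mathfrak{q}$ of $\mathfrak{z}_{\mathbb{C}}$, the subspace $\mathfrak{q}\cap\sigma(\mathfrak{q})$ is a $\sigma$--stable Cartan subalgebra of $\mathfrak{z}_{\mathbb{C}}$ and $\mathfrak{q}\cap\mathfrak{z}=(\mathfrak{q}\cap\sigma(\mathfrak{q}))\cap\mathfrak{z}$ is a maximal torus of $\mathfrak{z}$. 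Applied to $(\mathfrak{g},\mathfrak{b})$ this gives that $\mathfrak{a}=\mathfrak{b}\cap\mathfrak{g}$ is a Cartan subalgebra of $\mathfrak{g}$; since $\mathfrak{b}$ is a complex subspace, its complexification $\mathfrak{a}_{\mathbb{C}}$ lies in $\mathfrak{b}$ and is a Cartan subalgebra of $\mathfrak{g}_{\mathbb{C}}$. Applied to $(\mathfrak{h},\mathfrak{b}_2)$ it gives that $\mathfrak{a}_2=\mathfrak{b}_2\cap\mathfrak{h}$ is a Cartan subalgebra of $\mathfrak{h}$, and again $(\mathfrak{a}_2)_{\mathbb{C}}\subset\mathfrak{b}_2$. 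Since $\mathfrak{b}_2\subset\mathfrak{b}_1\subset\mathfrak{b}$ we get $\mathfrak{a}_2\subset\mathfrak{b}\cap\mathfrak{h}=\mathfrak{a}\cap\mathfrak{h}$; conversely $\mathfrak{a}\cap\mathfrak{h}$ is an abelian subalgebra of the compact algebra $\mathfrak{h}$, hence lies in a maximal torus of $\mathfrak{h}$, and it contains $\mathfrak{a}_2$, so $\mathfrak{a}\cap\mathfrak{h}=\mathfrak{a}_2$. Complexifying (an intersection of complexifications of real subspaces is the complexification of their intersection) gives $\mathfrak{a}_{\mathbb{C}}\cap\mathfrak{h}_{\mathbb{C}}=(\mathfrak{a}_2)_{\mathbb{C}}$, of complex dimension $\dim\mathfrak{a}_2$.

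Write $r=\dim\mathfrak{a}$ and $r_0=\dim\mathfrak{a}_2$. A Borel subalgebra of a reductive complex Lie algebra $\mathfrak{z}$ has complex dimension $\tfrac12(\dim\mathfrak{z}+\operatorname{rank}\mathfrak{z})$, so $\dim_{\mathbb{C}}\mathfrak{b}=\tfrac12(\dim\mathfrak{g}+r)$ and $\dim_{\mathbb{C}}\mathfrak{b}_2=\tfrac12(\dim\mathfrak{h}+r_0)$; together with $\mathfrak{b}_1=\mathfrak{b}_2\oplus\mathfrak{r}'$ in (\ref{eq:borelofl}) and (\ref{eq:dim2}) this yields $\dim_{\mathbb{C}}\mathfrak{b}_1=\tfrac12(\dim\mathfrak{h}+r_0)+\tfrac12(\dim\mathfrak{g}-\dim\mathfrak{h})=\tfrac12(\dim\mathfrak{g}+r_0)$. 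Now I would compute $d:=\dim_{\mathbb{C}}(\mathfrak{a}_{\mathbb{C}}\cap\mathfrak{b}_1)$. From $\mathfrak{a}_{\mathbb{C}}\subset\mathfrak{b}$ and $\mathfrak{b}_1\subset\mathfrak{b}$,
\[
d\ \ge\ \dim_{\mathbb{C}}\mathfrak{a}_{\mathbb{C}}+\dim_{\mathbb{C}}\mathfrak{b}_1-\dim_{\mathbb{C}}\mathfrak{b}=r+\tfrac12(\dim\mathfrak{g}+r_0)-\tfrac12(\dim\mathfrak{g}+r)=\tfrac12(r+r_0).
\]
For the reverse inequality, pick $\mathfrak{u}$ as in the statement, so $\dim_{\mathbb{R}}\mathfrak{u}=r-r_0$, and note $\mathfrak{u}\cap\mathfrak{b}_1=0$: since $\mathfrak{u}\subset\mathfrak{a}\subset\mathfrak{g}$ and $\mathfrak{b}_1\subset\mathfrak{l}$, Lemma \ref{lmm:13} gives $\mathfrak{u}\cap\mathfrak{b}_1\subset\mathfrak{g}\cap\mathfrak{l}=\mathfrak{h}$, so $\mathfrak{u}\cap\mathfrak{b}_1\subset\mathfrak{u}\cap(\mathfrak{a}\cap\mathfrak{h})=0$. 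Thus $\mathfrak{u}$ and $\mathfrak{a}_{\mathbb{C}}\cap\mathfrak{b}_1$ are real subspaces of $\mathfrak{a}_{\mathbb{C}}$ in direct sum, whence $(r-r_0)+2d\le\dim_{\mathbb{R}}\mathfrak{a}_{\mathbb{C}}=2r$, i.e. $d\le\tfrac12(r+r_0)$. Hence $d=\tfrac12(r+r_0)$, and since now $\dim_{\mathbb{R}}\mathfrak{u}+\dim_{\mathbb{R}}(\mathfrak{a}_{\mathbb{C}}\cap\mathfrak{b}_1)=2r=\dim_{\mathbb{R}}\mathfrak{a}_{\mathbb{C}}$ we obtain the asserted decomposition $\mathfrak{a}_{\mathbb{C}}=\mathfrak{u}\oplus(\mathfrak{a}_{\mathbb{C}}\cap\mathfrak{b}_1)$.

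For the statements about roots I would argue again with $\sigma$. Put $W=\mathfrak{a}_{\mathbb{C}}\cap\mathfrak{b}_1$, so $\sigma(W)=\mathfrak{a}_{\mathbb{C}}\cap\overline{\mathfrak{b}_1}$. By (\ref{eq:Kos3}), $W\cap\sigma(W)\subset\mathfrak{b}_1\cap\overline{\mathfrak{b}_1}\subset\mathfrak{l}\cap\overline{\mathfrak{l}}=\mathfrak{h}_{\mathbb{C}}$, hence $W\cap\sigma(W)\subset\mathfrak{a}_{\mathbb{C}}\cap\mathfrak{h}_{\mathbb{C}}=(\mathfrak{a}_2)_{\mathbb{C}}$; conversely $(\mathfrak{a}_2)_{\mathbb{C}}\subset\mathfrak{a}_{\mathbb{C}}$ and $(\mathfrak{a}_2)_{\mathbb{C}}\subset\mathfrak{b}_2\subset\mathfrak{b}_1$, and $(\mathfrak{a}_2)_{\mathbb{C}}$ is $\sigma$--stable, so $(\mathfrak{a}_2)_{\mathbb{C}}\subset W\cap\sigma(W)$. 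Therefore $\dim_{\mathbb{C}}\bigl(W+\sigma(W)\bigr)=2d-r_0=r=\dim_{\mathbb{C}}\mathfrak{a}_{\mathbb{C}}$, i.e. $W+\sigma(W)=\mathfrak{a}_{\mathbb{C}}$. Finally, any $\mathbb{C}$--linear functional $\gamma$ on $\mathfrak{a}_{\mathbb{C}}$ with $\gamma(\mathfrak{a})\subset\mathbf{i}\,\mathbb{R}$ satisfies $\gamma(\sigma H)=-\overline{\gamma(H)}$ for all $H\in\mathfrak{a}_{\mathbb{C}}$; this holds for every root in $\Phi(\mathfrak{g}_{\mathbb{C}},\mathfrak{a}_{\mathbb{C}})$ and for the difference of two such roots. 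Consequently, if $\gamma$ vanishes on $W$ then it vanishes on $\sigma(W)$, hence on $W+\sigma(W)=\mathfrak{a}_{\mathbb{C}}$, so $\gamma=0$. Taking $\gamma=\alpha$ proves the first assertion and $\gamma=\alpha-\beta$ the second.

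The step I expect to be the main obstacle is the exact evaluation of $d=\dim_{\mathbb{C}}(\mathfrak{a}_{\mathbb{C}}\cap\mathfrak{b}_1)$: the lower bound becomes useful only because $\dim_{\mathbb{C}}\mathfrak{b}_1$ is known precisely, which is exactly what (\ref{eq:borelofl}) and (\ref{eq:dim2}) supply; everything else is bookkeeping with $\sigma$--stable subspaces and standard facts about Borel subalgebras of compact Lie algebras.
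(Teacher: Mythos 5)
Your proof is correct, and the bookkeeping at its core (the formula $\dim_{\mathbb{C}}\mathfrak{b}_{1}=\tfrac12(\dim\mathfrak{g}+l_{\mathfrak{h}})$ from (\ref{eq:borelofl}) and (\ref{eq:dim2}), and the use of Lemma \ref{lmm:13} to get $\mathfrak{u}\cap\mathfrak{b}_{1}=0$) coincides with the paper's, but you diverge in three places. First, the paper does not merely cite the fact that $\mathfrak{b}\cap\mathfrak{g}$ is a maximal torus: it derives it from the Iwasawa decomposition $\mathfrak{g}_{\mathbb{C}}=\mathfrak{b}+\mathfrak{g}$, a dimension count giving $\dim_{\mathbb{R}}(\mathfrak{b}\cap\mathfrak{g})=l_{\mathfrak{g}}$, and the observation that a solvable subalgebra of a compact Lie algebra is abelian; your appeal to the ``standard fact'' is the same statement, just taken as known. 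Second, instead of estimating $\dim_{\mathbb{C}}(\mathfrak{a}_{\mathbb{C}}\cap\mathfrak{b}_{1})$ from both sides, the paper first proves the real decomposition $\mathfrak{b}=\mathfrak{u}\oplus\mathfrak{b}_{1}$ and then intersects with $\mathfrak{a}_{\mathbb{C}}$ using $\mathfrak{u}\subset\mathfrak{a}_{\mathbb{C}}$; your route is equally valid and slightly more economical for the proposition itself, but note that the intermediate identity $\mathfrak{b}=\mathfrak{u}\oplus\mathfrak{b}_{1}$ is exactly what the paper records as (\ref{eq:main1}) and reuses in Theorem \ref{thm:The-parabolic}, so your shortcut forgoes a byproduct the paper needs later. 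Third, for the root statements the paper argues in one line: if $\lambda$ vanishes on $\mathfrak{a}_{\mathbb{C}}\cap\mathfrak{b}_{1}$ then $\lambda(\mathfrak{a}_{\mathbb{C}})=\lambda(\mathfrak{u})\subset\mathbf{i}\mathbb{R}$, and a nonzero complex-linear functional cannot have image in $\mathbf{i}\mathbb{R}$. Your alternative --- proving $W+\sigma(W)=\mathfrak{a}_{\mathbb{C}}$ via (\ref{eq:Kos3}) and the identity $W\cap\sigma(W)=(\mathfrak{a}_{2})_{\mathbb{C}}$, then using $\gamma(\sigma H)=-\overline{\gamma(H)}$ --- is correct and makes a genuinely different, somewhat longer use of the conjugation $\sigma$; it has the minor virtue of exhibiting $(\mathfrak{a}_{2})_{\mathbb{C}}$ as $W\cap\sigma(W)$ explicitly, while the paper's version isolates the single algebraic fact that does the work.
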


\begin{proof}
Note that if $\mathfrak{b}$ is a Borel subalgebra of $\mathfrak{g}%
_{\mathbb{C}}$ then $\dim_{\mathbb{C}}\mathfrak{b=}\frac{n+l_{\mathfrak{g}}%
}{2}$ where $n=\dim\mathfrak{g}$, and  if $\mathfrak{q}$ is subalgebra of
$\mathfrak{g}$ then $l_{\mathfrak{q}}$ is the (real) dimension of a Cartan subalgebra
of $\mathfrak{q}$. The Iwasawa decomposition of $G_{\mathbb{C}}$ or $\mathfrak{g}_{\mathbb{C}}$ (cf. Theorem 5.1 on p. 270 of \cite{Helgason} and Section 7.5, pp. 158--162 of \cite{Wallach} for the case that $\mathfrak{g}_{\mathbb{C}}$ is semisimple, from which the general case follows) implies that
as a real vector space%
\[
\mathfrak{g}_{\mathbb{C}}=\mathfrak{b+\mathfrak{g.}}%
\]
Thus we have the exact sequence of real vector spaces%
\[
0\rightarrow\mathfrak{b\cap\mathfrak{g\rightarrow}b\oplus
\mathfrak{g\rightarrow\mathfrak{g}}}_{\mathbb{C}}\rightarrow0.
\]
Hence $\dim_{\mathbb{R}}\mathfrak{b\cap\mathfrak{g}}=n+l_{\mathfrak{g}%
}+n-2n=l_{\mathfrak{g}}$. Since $\mathfrak{b}$ is solvable and $\mathfrak{g}$
has a positive definite inner product, $\mathfrak{a=b\cap\mathfrak{g}}$ is
abelian hence a Cartan subalgebra of $\mathfrak{g}$ by the above dimension counting. A same argument proves that $\mathfrak{b}%
_{2}\cap\mathfrak{h}$ is the Cartan subalgebra of $\mathfrak{h}$. From this we deduce that $\mathfrak{a}\cap \mathfrak{h}=\mathfrak{b}\cap \mathfrak{h}=\mathfrak{b}_2\cap \mathfrak{h}$, which implies that $$\dim_{\mathbb{R}}\mathfrak{u}= l_\mathfrak{g}-l_\mathfrak{h}.$$
 We also note that%
\[
\dim_{\mathbb{C}}\mathfrak{b}=\dim_{\mathbb{C}}\mathfrak{b}_{1}+\frac
{l_{\mathfrak{g}}-l_{\mathfrak{h}}}{2}.
\]
Indeed, by (\ref{eq:borelofl}) and (\ref{eq:dim2}) we have
\begin{eqnarray*}
\dim_{\mathbb{C}}\mathfrak{b}_{1}&=&\frac{n-\dim\mathfrak{h}}{2}+\frac
{\dim\mathfrak{h}+l_{\mathfrak{h}}}{2}
\\
&=&\frac{n+l_{\mathfrak{g}}}{2}-\frac{l_{\mathfrak{g}}-l_{\mathfrak{h}}}{2}.
\end{eqnarray*}

We now show that
\[
\mathfrak{b=\mathfrak{u\oplus\mathfrak{b}}}_{1}%
\]
as real vector spaces. Indeed, by Lemma \ref{lmm:13}, $p^{-1}((\mathfrak{g}/\mathfrak{h})_{+})%
\cap\mathfrak{g=\mathfrak{h}}$. Thus $\mathfrak{b}_{1}\cap\mathfrak{g\subset
\mathfrak{h\cap\mathfrak{a}}}$, which implies  $\mathfrak{b}_{1}\cap\mathfrak{u}=0$. But
$\mathfrak{b}_{1}+\mathfrak{u\subset\mathfrak{b}}$. And by the above%
\begin{eqnarray*}
\dim_{\mathbb{R}}\left( \mathfrak{b}_{1}+\mathfrak{u}\right)  &=&2\dim
_{\mathbb{C}}\mathfrak{b}_{1}+l_{\mathfrak{g}}-l_{\mathfrak{h}}\\
&=&n+l_{\mathfrak{h}}\\
&=&\dim_{\mathbb{R}}\mathfrak{b}.
\end{eqnarray*}
Putting them together proves  the assertion $\mathfrak{b}=\mathfrak{u}\oplus\mathfrak{b}_{1}$ as vector spaces.

Since $\mathfrak{u\subset\mathfrak{a}}_{\mathbb{C}}$, we have the following decomposition of real
vector spaces
\begin{eqnarray}
\mathfrak{a}_{\mathbb{C}}&=&\mathfrak{a}_{\mathbb{C}}\cap\mathfrak{u}
\oplus\mathfrak{a}_{\mathbb{C}}\cap\mathfrak{b}_{1}\nonumber\\
&=&\mathfrak{u}\oplus\mathfrak{a}_{\mathbb{C}}\cap\mathfrak{b}_{1}.\label{eq:a}
\end{eqnarray}
In fact, for any $x\in \mathfrak{a}_{\mathbb{C}}\subset \mathfrak{b}$, write it as $x=y+z$ with $y\in \mathfrak{u}$ and $z\in \mathfrak{b}_1$. Hence $z=x-y\in \mathfrak{a}_{\mathbb{C}}$ since $y\in \mathfrak{u}\subset \mathfrak{a}_{\mathbb{C}}$.

If $\alpha\in\Phi(\mathfrak{g}_{\mathbb{C}},\mathfrak{a}_{\mathbb{C}})$
(respectively $\gamma,\delta\in\Phi(\mathfrak{g}_{\mathbb{C}},\mathfrak{a}%
_{\mathbb{C}})$) and $\alpha_{|\mathfrak{b}_{1}\cap\mathfrak{a}_{\mathbb{C}}%
}=0 $ (respectively $\left(  \gamma-\delta\right)  _{|\mathfrak{b}_{1}%
\cap\mathfrak{a}_{\mathbb{C}}}=0$) then $\alpha=0$ (resp. $\gamma=\delta$).
Indeed, if $\lambda=\alpha$ or $\lambda=\gamma-\delta$ then $\lambda
(\mathfrak{a}_{\mathbb{C}})=\lambda(\mathfrak{u)}\subset{\bf{i}}\mathbb{R}$
since $\Phi(\mathfrak{g}_{\mathbb{C}},\mathfrak{a}_{\mathbb{C}}%
)_{|\mathfrak{a}}\subset{\bf{i}}\mathfrak{a}^{\ast}$. But $\lambda$ is
complex linear, so $\lambda=0$.
\end{proof}

If $\alpha\in\Phi(\mathfrak{g}_{\mathbb{C}},\mathfrak{a}_{\mathbb{C}})$ then
set $\mathfrak{g}_{\alpha}=\{X\in\mathfrak{g}_{\mathbb{C}}\, |\,
[H,X]=\alpha(H)X, H\in\mathfrak{a}_{\mathbb{C}}\}$. Then Proposition \ref{prop:13}
implies%
\[
\mathfrak{g}_{\alpha}=\{X\in\mathfrak{g}_{\mathbb{C}}\, |\, [H, X]=\alpha
(H)X,H\in\mathfrak{a}_{\mathbb{C}}\cap\mathfrak{b}_{1}\}.
\]
Let $\Phi^{+}=\{\alpha\in\Phi(\mathfrak{g}_{\mathbb{C}},\mathfrak{a}%
_{\mathbb{C}})\,|\, \mathfrak{g}_{\alpha}\subset\mathfrak{b}\}.$ Since the weight
spaces of $\mathfrak{a}_{\mathbb{C}}\cap\mathfrak{b}_{1}$ are the same as
those of $\mathfrak{a}_{\mathbb{C}}$ and $\mathfrak{a}_{\mathbb{C}%
}+\mathfrak{b}_{1}=\mathfrak{b}$ we have%
\[
\mathfrak{b}_{1}=\mathfrak{a}_{\mathbb{C}}\cap\mathfrak{b}_{1}\oplus
\bigoplus_{\alpha\in\Phi^{+}}\mathfrak{g}_{\alpha}%
\]
and as real Lie algebra
\begin{equation}\label{eq:main1}
\mathfrak{b}=\mathfrak{\mathfrak{u}}\oplus\mathfrak{b}_{1},
\end{equation}
with  $\mathfrak{u}$ and $\mathfrak{b}_1$ being two ideals of $\mathfrak{b}$.

Note that $\mathfrak{b}$ is uniquely determined by $\mathfrak{b}_1$ (which  is given by $\mathfrak{b}_3\oplus \mathfrak{r}$) since $[\mathfrak{b}, \mathfrak{b}]=[\mathfrak{b}_1, \mathfrak{b}_1]$,  and  since a Borel subalgebra is the normalizer of its commutator ideal.  In the case $\mathfrak{h}=\{0\}$ it implies that $J$ uniquely determines a Cartan subalgebra $\mathfrak{a}\subset \mathfrak{g}$ since $\mathfrak{b}$ is uniquely determined by $[\mathfrak{l}, \mathfrak{l}]$, with $\mathfrak{l}=\mathfrak{g}_+$ ($=\mathfrak{b}_1$).

\begin{lemma}
\label{lmm:14}
\[
\lbrack\mathfrak{a}_{\mathbb{C}},p^{-1} ((\mathfrak{g}/\mathfrak{h})_+)]\subset
p^{-1} ((\mathfrak{g}/\mathfrak{h})_+)%
\]
and
\[
\lbrack\mathfrak{a,\mathfrak{h}}]\subset\mathfrak{h.}%
\]
\end{lemma}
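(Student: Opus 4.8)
The plan is to prove the two containments by relating $\mathfrak{a}_{\mathbb C}$ (resp.\ $\mathfrak a$) to the Borel subalgebras already constructed. For the first containment, recall from Proposition~\ref{prop:13} and the discussion after it that $\mathfrak b = \mathfrak u \oplus \mathfrak b_1$ as real Lie algebras, with $\mathfrak u$ and $\mathfrak b_1$ ideals of $\mathfrak b$, and that $\mathfrak b_1 = p^{-1}((\mathfrak g/\mathfrak h)_+)\cap \mathfrak b = (\mathfrak a_{\mathbb C}\cap\mathfrak b_1)\oplus\bigoplus_{\alpha\in\Phi^+}\mathfrak g_\alpha$. The key point is the root-space description $p^{-1}((\mathfrak g/\mathfrak h)_+) = \mathfrak l$, whose Levi decomposition $\mathfrak l = \mathfrak r\oplus\mathfrak k_{\mathbb C}$ (Corollary~\ref{coro:11}) and whose Borel $\mathfrak b_1$ together exhibit $\mathfrak l$ as a union of $\mathfrak a_{\mathbb C}$-weight spaces: since $\mathfrak a_{\mathbb C}\subset\mathfrak b$ normalizes the ideal $\mathfrak b_1$, it normalizes $\mathfrak b_1$, and one checks that in fact $\mathfrak l$ is $\operatorname{ad}(\mathfrak a_{\mathbb C})$-invariant because $\mathfrak a_{\mathbb C}$ contains a Cartan subalgebra of both $\mathfrak k_{\mathbb C}$ and (after complexifying) of $\mathfrak r'$, and the remaining part of $\mathfrak l$ decomposes into $\mathfrak a_{\mathbb C}$-root spaces $\mathfrak g_\alpha$. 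So the first step is: show $\mathfrak l = (\mathfrak a_{\mathbb C}\cap\mathfrak l)\oplus\bigoplus_{\alpha\in S}\mathfrak g_\alpha$ for a suitable set $S\subset\Phi(\mathfrak g_{\mathbb C},\mathfrak a_{\mathbb C})$, from which $[\mathfrak a_{\mathbb C},\mathfrak l]\subset\mathfrak l$ is immediate since each $\mathfrak g_\alpha$ is $\mathfrak a_{\mathbb C}$-stable and $\mathfrak a_{\mathbb C}$ is abelian.

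To make that first step precise without circularity, I would argue as follows. We have $\mathfrak a_{\mathbb C}\subset\mathfrak b$ and $\mathfrak a_{\mathbb C}\cap\mathfrak b_1$ is a Cartan subalgebra of the reductive-by-nilpotent algebra $\mathfrak b_1$; moreover $\mathfrak a_{\mathbb C}\cap\mathfrak b_1$ is a Cartan subalgebra of $\mathfrak l$ as well, because $\mathfrak b_1$ is a Borel of $\mathfrak l$ and a Borel contains a Cartan of the ambient reductive-plus-solvable algebra — here using that $\mathfrak l=\mathfrak r\oplus\mathfrak k_{\mathbb C}$ with $\mathfrak k_{\mathbb C}$ semisimple. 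Hence $\mathfrak l$ decomposes under the adjoint action of $\mathfrak a_{\mathbb C}\cap\mathfrak b_1$ into weight spaces, and by Proposition~\ref{prop:13} the nonzero $\mathfrak a_{\mathbb C}\cap\mathfrak b_1$-weights of $\mathfrak g_{\mathbb C}$ are precisely the restrictions of the roots in $\Phi(\mathfrak g_{\mathbb C},\mathfrak a_{\mathbb C})$, with distinct roots giving distinct restrictions; so each weight space of $\mathfrak l$ for $\mathfrak a_{\mathbb C}\cap\mathfrak b_1$ equals $\mathfrak g_\alpha$ for the corresponding $\alpha$ (or lies in $\mathfrak a_{\mathbb C}$ for the zero weight). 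Because $\mathfrak g_\alpha$ is by definition stable under all of $\mathfrak a_{\mathbb C}$, not just $\mathfrak a_{\mathbb C}\cap\mathfrak b_1$, we get $[\mathfrak a_{\mathbb C},\mathfrak g_\alpha]\subset\mathfrak g_\alpha\subset\mathfrak l$, and $[\mathfrak a_{\mathbb C},\mathfrak a_{\mathbb C}\cap\mathfrak l]=0\subset\mathfrak l$; summing, $[\mathfrak a_{\mathbb C},\mathfrak l]\subset\mathfrak l$.

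For the second containment, I would deduce it from the first by passing to real points. We have $\mathfrak a\subset\mathfrak a_{\mathbb C}$ and $\mathfrak h = \mathfrak l\cap\mathfrak g$ by Lemma~\ref{lmm:13}. Given $x\in\mathfrak a$ and $y\in\mathfrak h$, we have $x\in\mathfrak a_{\mathbb C}$ and $y\in\mathfrak l$, so $[x,y]\in\mathfrak l$ by the first part; but also $x,y\in\mathfrak g$, so $[x,y]\in\mathfrak g$; hence $[x,y]\in\mathfrak l\cap\mathfrak g=\mathfrak h$. That gives $[\mathfrak a,\mathfrak h]\subset\mathfrak h$. (Alternatively one notes $\mathfrak a\subset\mathfrak b\cap\mathfrak g$ normalizes $\mathfrak b_1\cap\mathfrak g\subset\mathfrak h$, but the route through the first containment and Lemma~\ref{lmm:13} is cleanest.)

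The main obstacle I anticipate is the first step: justifying rigorously that $\mathfrak l$ is a union of $\mathfrak a_{\mathbb C}$-weight spaces — i.e.\ that the Cartan subalgebra $\mathfrak a_{\mathbb C}\cap\mathfrak b_1$ of the (non-reductive) Lie algebra $\mathfrak l$ acts semisimply on $\mathfrak l$ with weights among the restricted roots. This needs the structure of $\mathfrak l=\mathfrak r\oplus\mathfrak k_{\mathbb C}$ from Corollary~\ref{coro:11} together with the fact that $\mathfrak a_{\mathbb C}\cap\mathfrak b$ is a Cartan of $\mathfrak g_{\mathbb C}$ (Proposition~\ref{prop:13}) acting semisimply on all of $\mathfrak g_{\mathbb C}$; the subtlety is that the radical $\mathfrak r$ need not be $\mathfrak a_{\mathbb C}$-semisimple a priori, but it is, being a sum of root spaces of the genuinely semisimple action of the Cartan $\mathfrak a_{\mathbb C}$ on $\mathfrak g_{\mathbb C}$ — so the decomposition is inherited, and this is where the proof should concentrate its care.
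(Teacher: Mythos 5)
Your argument is correct and is essentially the paper's own proof: both establish that $p^{-1}((\mathfrak{g}/\mathfrak{h})_+)$ is invariant under $\mathfrak{a}_{\mathbb C}\cap\mathfrak{b}_1$ (as it contains $\mathfrak{b}_1$ and is a subalgebra), decompose it into $\mathfrak{a}_{\mathbb C}\cap\mathfrak{b}_1$-weight spaces which by Proposition~\ref{prop:13} coincide with the $\mathfrak{a}_{\mathbb C}$-root spaces, and then deduce $[\mathfrak{a},\mathfrak{h}]\subset\mathfrak{h}$ by intersecting with $\mathfrak{g}$ via Lemma~\ref{lmm:13}. The semisimplicity issue you flag is handled exactly as you suggest (the weight decomposition is inherited from the semisimple action of the Cartan subalgebra $\mathfrak{a}_{\mathbb C}$ on all of $\mathfrak{g}_{\mathbb C}$), so no gap remains.
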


\begin{proof}
Clearly $[\mathfrak{a}_{\mathbb{C}}\cap\mathfrak{b}_{1},p^{-1}((\mathfrak{g}/\mathfrak{h})_{+})]\subset p^{-1} ((\mathfrak{g}/\mathfrak{h})_+).$ This
implies that there exists $\Lambda\subset\Phi(\mathfrak{g}_{\mathbb{C}%
},\mathfrak{a}_{\mathbb{C}})$ such that%
\[
p^{-1} ((\mathfrak{g}/\mathfrak{h})_+)=\mathfrak{a}_{\mathbb{C}}\cap
p^{-1} ((\mathfrak{g}/\mathfrak{h})_+)\oplus\bigoplus_{\alpha\in\Lambda
}\mathfrak{g}_{\alpha}.
\]
Since all of these spaces are $\mathfrak{a}_{\mathbb{C}}$ invariant the first
assertion follows. For the second,%
\[
\lbrack\mathfrak{a},\mathfrak{h}]=[\mathfrak{a},p^{-1}((\mathfrak{g}/\mathfrak{h})_{+})\cap\mathfrak{g}]\subset p^{-1}((\mathfrak{g}/\mathfrak{h})_{+})\cap\mathfrak{g}=\mathfrak{h},%
\]
since $[\mathfrak{a}, p^{-1} ((\mathfrak{g}/\mathfrak{h})_+)]\subset p^{-1} ((\mathfrak{g}/\mathfrak{h})_+)$ and $[\mathfrak{a}, \mathfrak{g}]\subset \mathfrak{g}$.
\end{proof}

For our next results we will be using some well known material about parabolic Lie subalgebras of
$\mathfrak{g}_{\mathbb{C}},$ which we now recall. By definition these are
subalgebras, $\mathfrak{p}$, that contain a Borel  subalgebra of $\mathfrak{g}_\mathbb{C}$. If $\mathfrak{p}$ is
a parabolic subalgebra of $\mathfrak{g}_{\mathbb{C}}$ then its nil-radical  is
an ideal maximal among ideals consisting of nilpotent elements.  Recall that
$x\in\mathfrak{g}_{\mathbb{C}}$ is nilpotent if $x\in\lbrack\mathfrak{g}%
_{\mathbb{C}},\mathfrak{g}_{\mathbb{C}}]$ and $ad(x)$ is nilpotent. The
nil-radical is unique by definition.  We will use the notation $\tau$ for complex conjugation
on $\mathfrak{g}_{\mathbb{C}}$ with respect to $\mathfrak{g}$. If
$\mathfrak{p}$ is a parabolic subalgebra of $\mathfrak{g}_{\mathbb{C}}$ then
$\mathfrak{p}\cap\tau\left(  \mathfrak{p}\right)$
is a reductive subalgebra that is the centralizer in $\mathfrak{g}_{\mathbb{C}%
}$ of its center (cf. \cite{Wallach2}, Section 2.2.6, pp. 50--51).
And if $\mathfrak{n}$ is the nil-radical of $\mathfrak{p}$ then the Langlands decomposition of $\mathfrak{p}$
implies that
\[
\mathfrak{p}=\mathfrak{p}\cap\tau\left(  \mathfrak{p}\right)  \oplus\mathfrak{n}
\]
and that $\mathfrak{p}$ is the normalizer in $\mathfrak{g}_{\mathbb{C}}$ of $\mathfrak{n}$ (cf. \cite{Wallach2}, p. 52). Finally, if $\mathfrak{m}=\mathfrak{p\cap\mathfrak{g}}$ then $\mathfrak{p}%
\cap\tau\left(  \mathfrak{p}\right)  =\mathfrak{m}_{\mathbb{C}}$. It is also known that $\mathfrak{m}$ is the centralizer (in $\mathfrak{g}$) of its center.

We now return to the situation/notaions of the preceding Lemma \ref{lmm:14}.

\begin{theorem}
\label{thm:The-parabolic}$\mathfrak{p}=\mathfrak{a}_{\mathbb{C}}%
+p^{-1}((\mathfrak{g}/\mathfrak{h})_{+})$ is a parabolic subalgebra of
$\mathfrak{g}_{\mathbb{C}}$. Set $\mathfrak{m}=\mathfrak{p}\cap\mathfrak{g}$
then $\mathfrak{m}\supset\mathfrak{h}$ and $[\mathfrak{m},\mathfrak{m}%
]=[\mathfrak{h},\mathfrak{h}]$. The normalizer of $p^{-1} ((\mathfrak{g}/\mathfrak{h})_+)$ in $\mathfrak{g}_{\mathbb{C}}$ is $\mathfrak{p}$. Moreover, $\mathfrak{p}=\mathfrak{u}\oplus p^{-1} ((\mathfrak{g}/\mathfrak{h})_+)$.
\end{theorem}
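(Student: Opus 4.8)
The plan is to package together the ingredients already assembled in Proposition \ref{prop:13}, Lemma \ref{lmm:14}, and the structure theory of parabolic subalgebras recalled just above. First I would verify that $\mathfrak{p}=\mathfrak{a}_{\mathbb{C}}+p^{-1}((\mathfrak{g}/\mathfrak{h})_{+})$ is a subalgebra: by Lemma \ref{lmm:14}, $\mathfrak{a}_{\mathbb{C}}$ normalizes $p^{-1}((\mathfrak{g}/\mathfrak{h})_{+})$, and $p^{-1}((\mathfrak{g}/\mathfrak{h})_{+})$ is itself a Lie algebra by Lemma \ref{lmm:12} (integrability of $J$), so the sum is closed under bracket. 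Next, $\mathfrak{p}$ contains the Borel subalgebra $\mathfrak{b}$: indeed $\mathfrak{b}=\mathfrak{u}\oplus\mathfrak{b}_{1}$ by \eqref{eq:main1} and $\mathfrak{b}_{1}\subset p^{-1}((\mathfrak{g}/\mathfrak{h})_{+})$ while $\mathfrak{u}\subset\mathfrak{a}_{\mathbb{C}}$, so $\mathfrak{b}\subset\mathfrak{p}$; hence $\mathfrak{p}$ is parabolic by definition.

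Second, I would prove the ``moreover'' clause $\mathfrak{p}=\mathfrak{u}\oplus p^{-1}((\mathfrak{g}/\mathfrak{h})_{+})$ as real vector spaces. The decomposition \eqref{eq:a}, $\mathfrak{a}_{\mathbb{C}}=\mathfrak{u}\oplus(\mathfrak{a}_{\mathbb{C}}\cap\mathfrak{b}_{1})$, together with $\mathfrak{a}_{\mathbb{C}}\cap\mathfrak{b}_{1}\subset\mathfrak{b}_{1}\subset p^{-1}((\mathfrak{g}/\mathfrak{h})_{+})$, already shows $\mathfrak{p}=\mathfrak{u}+p^{-1}((\mathfrak{g}/\mathfrak{h})_{+})$; for the directness one needs $\mathfrak{u}\cap p^{-1}((\mathfrak{g}/\mathfrak{h})_{+})=0$, which follows from $\mathfrak{u}\subset\mathfrak{a}\subset\mathfrak{g}$ (wait—$\mathfrak{u}$ is a real complement to $\mathfrak{a}\cap\mathfrak{h}$ in $\mathfrak{a}$, hence $\mathfrak{u}\subset\mathfrak{g}$) combined with Lemma \ref{lmm:13}, namely $p^{-1}((\mathfrak{g}/\mathfrak{h})_{+})\cap\mathfrak{g}=\mathfrak{h}$, so $\mathfrak{u}\cap p^{-1}((\mathfrak{g}/\mathfrak{h})_{+})\subset\mathfrak{u}\cap\mathfrak{h}=0$. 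This also pins down dimensions: $\dim_{\mathbb{R}}\mathfrak{p}=\dim_{\mathbb{R}}\mathfrak{u}+\dim_{\mathbb{R}}p^{-1}((\mathfrak{g}/\mathfrak{h})_{+})=(l_{\mathfrak{g}}-l_{\mathfrak{h}})+(n+h)$, which should match the expected dimension of the parabolic.

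Third, for $\mathfrak{m}=\mathfrak{p}\cap\mathfrak{g}$ I would compute $\mathfrak{m}$ directly from $\mathfrak{p}=\mathfrak{u}\oplus p^{-1}((\mathfrak{g}/\mathfrak{h})_{+})$: an element $u+w$ with $u\in\mathfrak{u}\subset\mathfrak{g}$ and $w\in p^{-1}((\mathfrak{g}/\mathfrak{h})_{+})$ lies in $\mathfrak{g}$ iff $w\in\mathfrak{g}$, i.e. iff $w\in\mathfrak{h}$ by Lemma \ref{lmm:13}; hence $\mathfrak{m}=\mathfrak{u}\oplus\mathfrak{h}$. In particular $\mathfrak{h}\subset\mathfrak{m}$, and since $\mathfrak{u}\subset\mathfrak{a}$ is abelian with $[\mathfrak{u},\mathfrak{h}]\subset[\mathfrak{a},\mathfrak{h}]\subset\mathfrak{h}$ (Lemma \ref{lmm:14}), one gets $[\mathfrak{m},\mathfrak{m}]=[\mathfrak{u}\oplus\mathfrak{h},\mathfrak{u}\oplus\mathfrak{h}]\subset\mathfrak{h}$; more precisely the only possibly-nontrivial piece is $[\mathfrak{h},\mathfrak{h}]$, and conversely $[\mathfrak{h},\mathfrak{h}]\subset[\mathfrak{m},\mathfrak{m}]$ trivially, so $[\mathfrak{m},\mathfrak{m}]=[\mathfrak{h},\mathfrak{h}]$. (One should double-check that $[\mathfrak{u},\mathfrak{u}]=0$; this is because $\mathfrak{u}\subset\mathfrak{a}$ and $\mathfrak{a}$ is abelian, a Cartan subalgebra of the compact Lie algebra $\mathfrak{g}$.)

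Finally, for the normalizer statement: the recalled structure theory says a parabolic subalgebra is the normalizer of its nil-radical $\mathfrak{n}$, and the Langlands decomposition gives $\mathfrak{p}=\mathfrak{m}_{\mathbb{C}}\oplus\mathfrak{n}$. I would identify $\mathfrak{n}$ with a complement of $p^{-1}((\mathfrak{g}/\mathfrak{h})_{+})\cap\tau(p^{-1}((\mathfrak{g}/\mathfrak{h})_{+}))$-type data; concretely, I expect $p^{-1}((\mathfrak{g}/\mathfrak{h})_{+})=\mathfrak{n}\oplus p^{-1}((\mathfrak{m}/\mathfrak{h})_{+})$ along the lines flagged in the introduction, so that $p^{-1}((\mathfrak{g}/\mathfrak{h})_{+})\supset\mathfrak{n}$ and the normalizer of $p^{-1}((\mathfrak{g}/\mathfrak{h})_{+})$ contains $\mathfrak{p}$, while $\mathfrak{p}$ being maximal-minus-nothing (parabolic subalgebras are self-normalizing) forces equality. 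The main obstacle I anticipate is precisely this last identification—showing that $\mathfrak{n}\subset p^{-1}((\mathfrak{g}/\mathfrak{h})_{+})$ and that nothing outside $\mathfrak{p}$ normalizes $p^{-1}((\mathfrak{g}/\mathfrak{h})_{+})$—since the other claims are essentially bookkeeping with the decompositions \eqref{eq:a}, \eqref{eq:main1} and Lemmas \ref{lmm:13}, \ref{lmm:14}. A clean way around it is to argue by dimension: $\mathfrak{p}$ normalizes $p^{-1}((\mathfrak{g}/\mathfrak{h})_{+})$ (it is a subalgebra containing it as an ideal, because $\mathfrak{a}_{\mathbb{C}}$ normalizes it), so $\mathfrak{p}\subset N_{\mathfrak{g}_{\mathbb{C}}}(p^{-1}((\mathfrak{g}/\mathfrak{h})_{+}))$; and since $\mathfrak{p}$ is parabolic and any subalgebra strictly containing a parabolic which still normalizes a fixed subspace would have to be all of $\mathfrak{g}_{\mathbb{C}}$ (impossible here as $p^{-1}((\mathfrak{g}/\mathfrak{h})_{+})$ is a proper nonzero ideal there would contradict semisimplicity of $[\mathfrak{g},\mathfrak{g}]$ unless it sits inside a proper parabolic), one concludes $N_{\mathfrak{g}_{\mathbb{C}}}(p^{-1}((\mathfrak{g}/\mathfrak{h})_{+}))=\mathfrak{p}$.
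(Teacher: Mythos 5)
Your handling of the first three assertions is correct and is essentially the paper's own argument: $\mathfrak{p}$ is a subalgebra by Lemma \ref{lmm:14}, contains the Borel subalgebra $\mathfrak{b}$ by (\ref{eq:main1}) and hence is parabolic; the decomposition $\mathfrak{p}=\mathfrak{u}\oplus p^{-1}((\mathfrak{g}/\mathfrak{h})_{+})$ follows from (\ref{eq:a}) together with Lemma \ref{lmm:13}; and $\mathfrak{m}=\mathfrak{p}\cap\mathfrak{g}=\mathfrak{u}\oplus\mathfrak{h}$. (One small point on $[\mathfrak{m},\mathfrak{m}]=[\mathfrak{h},\mathfrak{h}]$: from $[\mathfrak{u},\mathfrak{h}]\subset\mathfrak{h}$ you only get $[\mathfrak{m},\mathfrak{m}]\subset\mathfrak{h}$; to land inside $[\mathfrak{h},\mathfrak{h}]$ either use that $[\mathfrak{m},\mathfrak{m}]$ is semisimple and hence equals its own derived algebra, or note that $\mathfrak{c}_{\mathfrak{h}}\subset\mathfrak{a}\cap\mathfrak{h}$ commutes with $\mathfrak{u}\subset\mathfrak{a}$, so $[\mathfrak{u},\mathfrak{h}]=[\mathfrak{u},[\mathfrak{h},\mathfrak{h}]]\subset[\mathfrak{h},\mathfrak{h}]$.)

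The genuine gap is the reverse inclusion in the normalizer statement, which you correctly single out as the obstacle but then dispose of with two arguments that do not work. ``Parabolic subalgebras are self-normalizing'' controls $N_{\mathfrak{g}_{\mathbb{C}}}(\mathfrak{p})$, not the normalizer of the strictly smaller subalgebra $p^{-1}((\mathfrak{g}/\mathfrak{h})_{+})$. And the claimed dichotomy --- that a subalgebra strictly containing a parabolic must be all of $\mathfrak{g}_{\mathbb{C}}$ --- is false: every subalgebra containing $\mathfrak{b}$ is parabolic, and in general there is a whole lattice of parabolics strictly between $\mathfrak{p}$ and $\mathfrak{g}_{\mathbb{C}}$ (only a maximal parabolic has the property you invoke, and nothing forces $\mathfrak{p}$ to be maximal). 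Since $N_{\mathfrak{g}_{\mathbb{C}}}(p^{-1}((\mathfrak{g}/\mathfrak{h})_{+}))$ is a subalgebra containing $\mathfrak{p}$, your argument leaves open that it is one of these intermediate parabolics. The paper closes this as follows: the nil-radical $\mathfrak{n}$ of $\mathfrak{p}$ is contained in $p^{-1}((\mathfrak{g}/\mathfrak{h})_{+})$ and is in fact the nil-radical of $p^{-1}((\mathfrak{g}/\mathfrak{h})_{+})$ itself; the nil-radical is a characteristic ideal, preserved by every derivation, so any $x$ with $ad(x)(p^{-1}((\mathfrak{g}/\mathfrak{h})_{+}))\subset p^{-1}((\mathfrak{g}/\mathfrak{h})_{+})$ also satisfies $ad(x)(\mathfrak{n})\subset\mathfrak{n}$ and hence lies in the normalizer of $\mathfrak{n}$, which is $\mathfrak{p}$ by the Langlands-decomposition facts recalled before the theorem. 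That identification of $\mathfrak{n}$ as a canonical ideal of $p^{-1}((\mathfrak{g}/\mathfrak{h})_{+})$ is the idea your proposal is missing.
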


\begin{proof} By Lemma \ref{lmm:14}, $\mathfrak{p}$ as defined in the statement of the theorem  is a subalgebra. Moreover the Borel subalgebra  $\mathfrak{b}$ of $\mathfrak{g}_{\mathbb{C}}$ is contained in $\mathfrak{p}$ by (\ref{eq:main1}) so $\mathfrak{p}$ is a parabolic
subalgebra. Since $\mathfrak{b}=\mathfrak{u}\oplus \mathfrak{b}_1$, $\mathfrak{b}_1=\mathfrak{b}\cap
p^{-1}((\mathfrak{g}/\mathfrak{h})_{+})$, and $\mathfrak{p}=\mathfrak{b}+p^{-1} ((\mathfrak{g}/\mathfrak{h})_+)$, $\mathfrak{p}=\mathfrak{u}\oplus p^{-1} ((\mathfrak{g}/\mathfrak{h})_+)$. Hence arguing as in (\ref{eq:a})
\begin{equation}\label{eq:main2}
\mathfrak{m}=\mathfrak{p}\cap\mathfrak{g}=\mathfrak{u}\oplus
p^{-1} ((\mathfrak{g}/\mathfrak{h})_+)\cap
\mathfrak{g}=\mathfrak{u}\oplus\mathfrak{h}.
\end{equation}
The preceding lemma now implies that $[\mathfrak{m},\mathfrak{m}%
]=[\mathfrak{h},\mathfrak{h}]$. Lemma \ref{lmm:14} then implies that
$p^{-1} ((\mathfrak{g}/\mathfrak{h})_+)$ is normal in $\mathfrak{p}$.  Since
$\mathfrak{p}$ is the normalizer
of its nil-radical $\mathfrak{n}$, which is contained in $p^{-1} ((\mathfrak{g}/\mathfrak{h})_+)$, $\mathfrak{p}$ is the normalizer of $p^{-1}((\mathfrak{g}/\mathfrak{h})_+)$. In fact, if $x\in [\mathfrak{g}_\mathbb{C}, \mathfrak{g}_\mathbb{C}]$ is such that $ad(x) (p^{-1} ((\mathfrak{g}/\mathfrak{h})_+))\subset p^{-1} ((\mathfrak{g}/\mathfrak{h})_+)$ then  $ad(x) (\mathfrak{n})\subset \mathfrak{n}$, since $\mathfrak{n}$ is also the nil-radical of $p^{-1} ((\mathfrak{g}/\mathfrak{h})_+)$ and the automorphism of a Lie algebra preserves its nil-radical.\end{proof}
Note that the above theorem, together with (\ref{eq:main2}), implies Theorem \ref{thm:main2-tits} in the introduction with $\mathfrak{u}$ as in Proposition \ref{prop:13}.
\begin{corollary}
\label{coro:12} Let $\mathfrak{m}$ be as in the previous theorem. Then
\[
\mathfrak{m}=\{x\in\mathfrak{g}\, |\, [x,\mathfrak{h}]\subset\mathfrak{h}%
,[\overline{ad}(x),J]=0\}.
\]
\end{corollary}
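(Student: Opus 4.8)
Write $\mathfrak{m}'$ for the set on the right-hand side and, as before, set $\mathfrak{l}=p^{-1}((\mathfrak{g}/\mathfrak{h})_{+})$; the goal is $\mathfrak{m}'=\mathfrak{m}$, where $\mathfrak{m}=\mathfrak{p}\cap\mathfrak{g}$ as in Theorem \ref{thm:The-parabolic}. The plan is to reinterpret the two defining conditions of $\mathfrak{m}'$ as the single condition ``$x$ normalizes $\mathfrak{l}$'', and then simply quote Theorem \ref{thm:The-parabolic}, whose statement already identifies the normalizer of $\mathfrak{l}$ in $\mathfrak{g}_{\mathbb{C}}$ with $\mathfrak{p}$.

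First I would record the linear-algebra reformulation of the condition $[\overline{ad}(x),J]=0$. For $x\in N_{\mathfrak{g}}(\mathfrak{h})$ the operator $\overline{ad}(x)$ on $\mathfrak{g}/\mathfrak{h}$ extends complex-linearly to $(\mathfrak{g}/\mathfrak{h})_{\mathbb{C}}$ and, since $x$ is real, commutes with $\tau$. Because $J$ acts by $\pm\mathbf{i}$ on $(\mathfrak{g}/\mathfrak{h})_{\pm}$, commuting with $J$ is equivalent to preserving both eigenspaces; and as $\overline{ad}(x)$ commutes with $\tau$ and $(\mathfrak{g}/\mathfrak{h})_{-}=\tau((\mathfrak{g}/\mathfrak{h})_{+})$, stability of $(\mathfrak{g}/\mathfrak{h})_{+}$ already forces stability of $(\mathfrak{g}/\mathfrak{h})_{-}$. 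Hence, for $x\in N_{\mathfrak{g}}(\mathfrak{h})$,
\[
[\overline{ad}(x),J]=0\iff \overline{ad}(x)\big((\mathfrak{g}/\mathfrak{h})_{+}\big)\subset(\mathfrak{g}/\mathfrak{h})_{+}.
\]

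Next I would lift this to $\mathfrak{g}_{\mathbb{C}}$. For $y\in\mathfrak{l}$ one has $p[x,y]=\overline{ad}(x)\,p(y)$, and $p(y)$ runs through all of $(\mathfrak{g}/\mathfrak{h})_{+}$ as $y$ runs through $\mathfrak{l}$; since $\mathfrak{l}=p^{-1}((\mathfrak{g}/\mathfrak{h})_{+})$ by definition, $\overline{ad}(x)$ preserves $(\mathfrak{g}/\mathfrak{h})_{+}$ exactly when $[x,\mathfrak{l}]\subset\mathfrak{l}$. Moreover this last condition by itself forces the first defining condition of $\mathfrak{m}'$: because $ad(x)$ commutes with $\tau$, $[x,\mathfrak{l}]\subset\mathfrak{l}$ gives $[x,\tau\mathfrak{l}]\subset\tau\mathfrak{l}$, hence $[x,\mathfrak{l}\cap\tau\mathfrak{l}]\subset\mathfrak{l}\cap\tau\mathfrak{l}=\mathfrak{h}_{\mathbb{C}}$ by (\ref{eq:Kos3}), so $[x,\mathfrak{h}]\subset\mathfrak{h}$. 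Combining, $\mathfrak{m}'=\{x\in\mathfrak{g}\,|\,[x,\mathfrak{l}]\subset\mathfrak{l}\}=N_{\mathfrak{g}_{\mathbb{C}}}(\mathfrak{l})\cap\mathfrak{g}$. Finally, Theorem \ref{thm:The-parabolic} gives $N_{\mathfrak{g}_{\mathbb{C}}}(\mathfrak{l})=\mathfrak{p}$, whence $\mathfrak{m}'=\mathfrak{p}\cap\mathfrak{g}=\mathfrak{m}$, as claimed. The only point needing a little care — and the closest thing here to an obstacle — is the equivalence in the first step: ``commutes with $J$ $\Rightarrow$ preserves $(\mathfrak{g}/\mathfrak{h})_{+}$'' is immediate, while the converse uses that $\overline{ad}(x)$ is the complexification of a real operator, so stability of $(\mathfrak{g}/\mathfrak{h})_{+}$ automatically yields stability of its conjugate; everything else is formal, resting on Theorem \ref{thm:The-parabolic} and (\ref{eq:Kos3}).
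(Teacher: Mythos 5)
Your proof is correct and follows essentially the paper's route: the key step, translating $[\overline{ad}(x),J]=0$ into $\overline{ad}(x)$-stability of $(\mathfrak{g}/\mathfrak{h})_{+}$ and hence of $\mathfrak{l}=p^{-1}((\mathfrak{g}/\mathfrak{h})_{+})$, and then invoking the normalizer statement of Theorem \ref{thm:The-parabolic}, is exactly the paper's argument for the inclusion of the right-hand side into $\mathfrak{m}$. The only variation is in the reverse inclusion, where the paper cites Lemma \ref{lmm:14} together with (\ref{eq:main2}), whereas you rederive $[x,\mathfrak{h}]\subset\mathfrak{h}$ from $[x,\mathfrak{l}]\subset\mathfrak{l}$ via $\tau$-equivariance and (\ref{eq:Kos3}); both are valid, and yours is marginally more self-contained.
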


\begin{proof}
Assume that $x$ is in the right hand side of the assertion. Then
$\overline{ad}(x)(\mathfrak{g}/\mathfrak{h)}_{+}\subset(\mathfrak{g}%
/\mathfrak{h)}_{+}$, hence $ad(x)p^{-1} ((\mathfrak{g}/\mathfrak{h})_+)\subset
p^{-1} ((\mathfrak{g}/\mathfrak{h})_+)$. So the theorem above implies that
$x\in \mathfrak{p}$, thus $x\in\mathfrak{p}\cap\mathfrak{g},$  which is $\mathfrak{m}$ by the definition. On the other hand, Lemma \ref{lmm:14} and (\ref{eq:main2}) imply that the left hand side of the equation is a subset of the right hand side.
\end{proof}

\begin{corollary}
\label{BasicObs} Let the notation be as in the previous theorem. We look upon
$\mathfrak{m}/\mathfrak{h}$ as a subspace of $\mathfrak{g}/\mathfrak{h}$. Then $J$ satisfies
\[
J(\mathfrak{m}/\mathfrak{h})\subset\, \mathfrak{m}/\mathfrak{h}
\]
and it defines an integrable, $\mathfrak{m}$--invariant complex structure on
$\mathfrak{m}/\mathfrak{h}.$ If $\mathfrak{n}$ is the nil-radical of
$\mathfrak{p}$ then
\[
p^{-1} ((\mathfrak{g}/\mathfrak{h})_+)=p^{-1}((\mathfrak{m}/\mathfrak{h}%
)_{+})\oplus\mathfrak{n}\text{.}%
\]
\end{corollary}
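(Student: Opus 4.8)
The plan is to determine the restricted structure by identifying its $(+\mathbf{i})$--eigenspace, which I expect to be $p(\mathfrak{m}_{\mathbb{C}}\cap\mathfrak{l})$ where $\mathfrak{l}=p^{-1}((\mathfrak{g}/\mathfrak{h})_{+})$, and then to read off all three assertions from the Koszul identities (\ref{eq:Kos2}), (\ref{eq:Kos3}), the equality $\mathfrak{m}_{\mathbb{C}}=\mathfrak{p}\cap\tau(\mathfrak{p})$, and the Langlands decomposition $\mathfrak{p}=\mathfrak{m}_{\mathbb{C}}\oplus\mathfrak{n}$, together with $\mathfrak{h}_{\mathbb{C}}\subset\mathfrak{m}_{\mathbb{C}}\cap\mathfrak{l}\cap\tau(\mathfrak{l})$ and $\mathfrak{n}\subset\mathfrak{l}$ (the last recorded in the proof of Theorem \ref{thm:The-parabolic}). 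Throughout, $\tau$ denotes conjugation of $\mathfrak{g}_{\mathbb{C}}$ with respect to $\mathfrak{g}$; because $p$ is defined over $\mathbb{R}$ one has $\tau(\mathfrak{l})=p^{-1}((\mathfrak{g}/\mathfrak{h})_{-})$, and $\mathfrak{l}\subset\mathfrak{p}$, $\tau(\mathfrak{l})\subset\tau(\mathfrak{p})$, $\mathfrak{a}_{\mathbb{C}}\subset\mathfrak{p}\cap\tau(\mathfrak{p})$. I expect the first of the steps below to be the only one requiring real care.

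First I would establish the splitting
\[
\mathfrak{m}_{\mathbb{C}}=(\mathfrak{m}_{\mathbb{C}}\cap\mathfrak{l})+(\mathfrak{m}_{\mathbb{C}}\cap\tau(\mathfrak{l})),\qquad (\mathfrak{m}_{\mathbb{C}}\cap\mathfrak{l})\cap(\mathfrak{m}_{\mathbb{C}}\cap\tau(\mathfrak{l}))=\mathfrak{h}_{\mathbb{C}}.
\]
For the sum, given $x\in\mathfrak{m}_{\mathbb{C}}$ use (\ref{eq:Kos2}) to write $x=a+b$ with $a\in\mathfrak{l}$ and $b\in\tau(\mathfrak{l})$; then $a\in\mathfrak{l}\subset\mathfrak{p}$, while $a=x-b\in\tau(\mathfrak{p})$ since $x\in\mathfrak{m}_{\mathbb{C}}\subset\tau(\mathfrak{p})$ and $b\in\tau(\mathfrak{l})\subset\tau(\mathfrak{p})$, so $a\in\mathfrak{m}_{\mathbb{C}}\cap\mathfrak{l}$, and symmetrically $b\in\mathfrak{m}_{\mathbb{C}}\cap\tau(\mathfrak{l})$; the asserted intersection is $\mathfrak{m}_{\mathbb{C}}\cap(\mathfrak{l}\cap\tau(\mathfrak{l}))=\mathfrak{m}_{\mathbb{C}}\cap\mathfrak{h}_{\mathbb{C}}=\mathfrak{h}_{\mathbb{C}}$ by (\ref{eq:Kos3}). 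Since $\ker p=\mathfrak{h}_{\mathbb{C}}$ sits inside each of $\mathfrak{m}_{\mathbb{C}},\mathfrak{l},\tau(\mathfrak{l})$, applying $p$ turns this into the direct sum $p(\mathfrak{m}_{\mathbb{C}})=p(\mathfrak{m}_{\mathbb{C}}\cap\mathfrak{l})\oplus p(\mathfrak{m}_{\mathbb{C}}\cap\tau(\mathfrak{l}))$ with $p(\mathfrak{m}_{\mathbb{C}}\cap\mathfrak{l})\subset(\mathfrak{g}/\mathfrak{h})_{+}$ and $p(\mathfrak{m}_{\mathbb{C}}\cap\tau(\mathfrak{l}))\subset(\mathfrak{g}/\mathfrak{h})_{-}$. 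Hence $J$, which is $\pm\mathbf{i}$ on these two pieces, preserves $p(\mathfrak{m}_{\mathbb{C}})=\mathfrak{m}_{\mathbb{C}}/\mathfrak{h}_{\mathbb{C}}$; as $J$ commutes with conjugation (with respect to $\mathfrak{g}/\mathfrak{h}$) and $\mathfrak{m}/\mathfrak{h}$ is the real form of $\mathfrak{m}_{\mathbb{C}}/\mathfrak{h}_{\mathbb{C}}$, it follows that $J(\mathfrak{m}/\mathfrak{h})\subset\mathfrak{m}/\mathfrak{h}$, and comparing eigenspaces identifies $(\mathfrak{m}/\mathfrak{h})_{+}$ with $p(\mathfrak{m}_{\mathbb{C}}\cap\mathfrak{l})$.

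Next I would check that $J|_{\mathfrak{m}/\mathfrak{h}}$ is an $\mathfrak{m}$--invariant, integrable complex structure. The $\mathfrak{m}$--invariance is immediate from Corollary \ref{coro:12}: every $x\in\mathfrak{m}$ normalizes $\mathfrak{h}$ and has $[\overline{ad}(x),J]=0$ on $\mathfrak{g}/\mathfrak{h}$, hence on the invariant subspace $\mathfrak{m}/\mathfrak{h}$. For integrability one can apply Lemma \ref{lmm:12} to the triple $(\mathfrak{m},\mathfrak{h},J|_{\mathfrak{m}/\mathfrak{h}})$, whose associated preimage is $\mathfrak{m}_{\mathbb{C}}\cap\mathfrak{l}$, an intersection of two Lie subalgebras and hence a Lie algebra; alternatively, since $J$ preserves $\mathfrak{m}/\mathfrak{h}$ the lifts $x,y,x',y'$ in the formula for $N$ may all be taken in $\mathfrak{m}$, so every bracket occurring lies in $[\mathfrak{m},\mathfrak{m}]=[\mathfrak{h},\mathfrak{h}]\subset\mathfrak{h}$ and $N$ vanishes on $\mathfrak{m}/\mathfrak{h}$.

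Finally, for the fibre identity: from $(\mathfrak{m}/\mathfrak{h})_{+}=p(\mathfrak{m}_{\mathbb{C}}\cap\mathfrak{l})$ and $\mathfrak{h}_{\mathbb{C}}\subset\mathfrak{m}_{\mathbb{C}}\cap\mathfrak{l}$ one has $p^{-1}((\mathfrak{m}/\mathfrak{h})_{+})=\mathfrak{m}_{\mathbb{C}}\cap\mathfrak{l}$ inside $\mathfrak{g}_{\mathbb{C}}$. Then, for $x\in\mathfrak{l}\subset\mathfrak{p}$, decompose $x=x_{m}+x_{n}$ by Langlands with $x_{m}\in\mathfrak{m}_{\mathbb{C}}$ and $x_{n}\in\mathfrak{n}\subset\mathfrak{l}$; since $x_{m}=x-x_{n}\in\mathfrak{l}$ we get $x_{m}\in\mathfrak{m}_{\mathbb{C}}\cap\mathfrak{l}$, so $\mathfrak{l}\subset(\mathfrak{m}_{\mathbb{C}}\cap\mathfrak{l})+\mathfrak{n}$; the reverse inclusion is clear, and the sum is direct because $\mathfrak{m}_{\mathbb{C}}\cap\mathfrak{n}=0$. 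This is precisely $p^{-1}((\mathfrak{g}/\mathfrak{h})_{+})=p^{-1}((\mathfrak{m}/\mathfrak{h})_{+})\oplus\mathfrak{n}$. The main obstacle is the first step, where the Langlands decomposition of $\mathfrak{p}$ and the Koszul identities must be interlaced carefully, keeping track of which subspaces lie in $\mathfrak{p}$ and which in $\tau(\mathfrak{p})$; everything after that is formal.
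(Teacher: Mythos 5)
Your proof is correct, but for the main assertion --- that $J$ preserves $\mathfrak{m}/\mathfrak{h}$ --- you take a genuinely different route from the paper. The paper observes that $\mathfrak{m}/\mathfrak{h}$ is exactly the joint kernel of the operators $\overline{ad}(x)$, $x\in\mathfrak{m}$, acting on $\mathfrak{g}/\mathfrak{h}$ (using $\mathfrak{m}/\mathfrak{h}=\mathfrak{c}_{\mathfrak{m}}/\mathfrak{c}_{\mathfrak{h}}$ and $[\mathfrak{m},\mathfrak{m}]=[\mathfrak{h},\mathfrak{h}]$), and then invokes $[J,\overline{ad}(x)]=0$ for $x\in\mathfrak{m}$, which follows from Lemma \ref{lmm:14}, to conclude that $J$ preserves that kernel; integrability of the restriction is automatic because $\mathfrak{m}/\mathfrak{h}$ is abelian. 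You instead intersect the Koszul decomposition $\mathfrak{g}_{\mathbb{C}}=\mathfrak{l}+\tau(\mathfrak{l})$ with $\mathfrak{m}_{\mathbb{C}}=\mathfrak{p}\cap\tau(\mathfrak{p})$ to split $\mathfrak{m}_{\mathbb{C}}/\mathfrak{h}_{\mathbb{C}}$ into summands lying in the $\pm{\bf{i}}$ eigenspaces of $J$; this is a clean piece of linear algebra whose added benefit is the explicit identification $(\mathfrak{m}/\mathfrak{h})_{+}=p(\mathfrak{m}_{\mathbb{C}}\cap\mathfrak{l})$, hence $p^{-1}((\mathfrak{m}/\mathfrak{h})_{+})=\mathfrak{m}_{\mathbb{C}}\cap\mathfrak{l}$, which you then reuse to make fully explicit the paper's one-line deduction of the final identity from $\mathfrak{p}=\mathfrak{m}_{\mathbb{C}}\oplus\mathfrak{n}$ and $\mathfrak{n}\subset p^{-1}((\mathfrak{g}/\mathfrak{h})_{+})$. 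The paper's argument is shorter and stays on the real side throughout; yours costs some bookkeeping about which subspaces lie in $\mathfrak{p}$ versus $\tau(\mathfrak{p})$, but it bypasses the identification of $\mathfrak{m}/\mathfrak{h}$ with a joint kernel and produces the eigenspace description needed for the last assertion as a byproduct. Both are valid.
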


\begin{proof}
Lemma \ref{lmm:14} implies that $[\mathfrak{m}, p^{-1}((\mathfrak{g}/\mathfrak{h})_{+})]\subset p^{-1} ((\mathfrak{g}/\mathfrak{h})_+)$. This
implies that if $x\in\mathfrak{m}$ then $[J,\overline{ad}(x)]=0$. Since
$\mathfrak{m}=\mathfrak{c}_{\mathfrak{m}}\oplus\lbrack\mathfrak{m}%
,\mathfrak{m}]$, $\mathfrak{h}=\mathfrak{c}_{\mathfrak{h}}\oplus\lbrack\mathfrak{h}%
,\mathfrak{h}]$, $[\mathfrak{m}, \mathfrak{m}]=[\mathfrak{h}, \mathfrak{h}]$,  $\mathfrak{m}/\mathfrak{h}=\mathfrak{c}_{\mathfrak{m}%
}/\mathfrak{c}_{\mathfrak{h}}=\mathfrak{a}/\left(  \mathfrak{h}\cap
\mathfrak{a}\right)  $ thus
\[
\mathfrak{m/\mathfrak{h}}=\{v\in\mathfrak{g/\mathfrak{h}}\, |\, \overline
{ad}(x)v=0,x\in\mathfrak{m}\}.
\]
This together with $[J, \overline{ad}(x)]=0$ implies
\[
J(\mathfrak{m}/\mathfrak{h})\subset\, \mathfrak{m}/\mathfrak{h}.
\]
We also note that since $\mathfrak{p=m}_{\mathbb{C}}\oplus\mathfrak{n}$ and
$\mathfrak{n}$ is contained in $p^{-1} ((\mathfrak{g}/\mathfrak{h})_+)$ the
second assertion follows.
\end{proof}

\section{The classification}

The aim of this section is to give a classification of the triples
$(\mathfrak{g},\mathfrak{h},J)$ with $\mathfrak{g}$ a finite dimensional Lie
algebra over $\mathbb{R}$ that admits an $\mathfrak{g}$-invariant inner
product $\langle \cdot, \cdot\rangle$ (i.e. is the Lie algebra of a compact Lie group), which will be fixed for the discussion below, $\mathfrak{h}$ is a
Lie subalgebra and $J$ is an integrable complex structure on $\mathfrak{g}%
/\mathfrak{h}$.

The following  result, together with Theorem \ref{thm:The-parabolic}, Corollary \ref{coro:12}, implies  Theorem \ref{thm:main1-wang} in the introduction, which contains the main results of Wang \cite{Wang}.

\begin{theorem}
\label{Wang} Let $\mathfrak{g}$ be a finite dimensional Lie algebra over
$\mathbb{R}$ that admits a $\mathfrak{g}$--invariant inner product and let
$\mathfrak{h}$ be a Lie subalgebra such that $\dim\mathfrak{g}/\mathfrak{h}$
is even. A necessary and sufficient condition that $\mathfrak{g}/\mathfrak{h}
$ admits an integrable $\mathfrak{h}$--invariant complex structure is that
there exists an abelian subalgebra, $\mathfrak{t}$, of $\mathfrak{g}$ such
that  $\mathfrak{m}=C_\mathfrak{g}(\mathfrak{t})$, the centralizer of $\mathfrak{t}$ in $\mathfrak{g}$
satisfies  $[\mathfrak{m}, \mathfrak{m]=}%
[\mathfrak{h}, \mathfrak{h]}$.
\end{theorem}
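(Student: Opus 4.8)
The plan is to establish the two implications separately; necessity is essentially immediate from the machinery already assembled, whereas sufficiency is where the real work lies. For necessity, suppose $\mathfrak{g}/\mathfrak{h}$ admits an integrable $\mathfrak{h}$-invariant complex structure $J$ (which already forces $\dim\mathfrak{g}/\mathfrak{h}$ to be even). Theorem \ref{thm:The-parabolic} produces a parabolic subalgebra $\mathfrak{p}$ of $\mathfrak{g}_{\mathbb{C}}$ with $\mathfrak{m}:=\mathfrak{p}\cap\mathfrak{g}\supset\mathfrak{h}$ and $[\mathfrak{m},\mathfrak{m}]=[\mathfrak{h},\mathfrak{h}]$, and, as recalled just before that theorem, this $\mathfrak{m}$ is the centralizer in $\mathfrak{g}$ of its own center. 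Hence one simply takes $\mathfrak{t}:=\mathfrak{c}_{\mathfrak{m}}$: it is abelian, $C_{\mathfrak{g}}(\mathfrak{t})=\mathfrak{m}$, and $[\mathfrak{m},\mathfrak{m}]=[\mathfrak{h},\mathfrak{h}]$, which is exactly the asserted condition.

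For sufficiency the first, and subtlest, task is to reduce to the situation $\mathfrak{h}\subset\mathfrak{m}$. The hypothesis only supplies \emph{some} abelian $\mathfrak{t}$, and for such a $\mathfrak{t}$ the centralizer $\mathfrak{m}=C_{\mathfrak{g}}(\mathfrak{t})$ need not contain $\mathfrak{h}$ (already visible for two distinct maximal tori of $\mathfrak{su}(2)$), so I would conjugate $\mathfrak{t}$ suitably. Put $\mathfrak{k}=[\mathfrak{h},\mathfrak{h}]=[\mathfrak{m},\mathfrak{m}]$; since $\mathfrak{g},\mathfrak{h},\mathfrak{m}$ are reductive one has $\mathfrak{h}=\mathfrak{k}\oplus\mathfrak{c}_{\mathfrak{h}}$, $\mathfrak{m}=\mathfrak{k}\oplus\mathfrak{c}_{\mathfrak{m}}$ and $\mathfrak{t}\subset\mathfrak{c}_{\mathfrak{m}}$, while $\mathfrak{c}_{\mathfrak{h}}$ and $\mathfrak{c}_{\mathfrak{m}}$ both lie in the reductive subalgebra $\mathfrak{d}:=C_{\mathfrak{g}}(\mathfrak{k})$, which by definition centralizes $\mathfrak{k}$. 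Let $K\subset G$ be the compact connected subgroup with $Lie(K)=\mathfrak{k}$ and $D$ the identity component of its centralizer in $G$, a compact connected subgroup with $Lie(D)=\mathfrak{d}$. Extending $\mathfrak{c}_{\mathfrak{h}}$ and $\mathfrak{t}$ to maximal tori of $D$ and invoking their conjugacy, there is $g\in D$ with $[Ad(g)\mathfrak{t},\mathfrak{c}_{\mathfrak{h}}]=0$; since $g$ commutes with $K$, $Ad(g)$ fixes $\mathfrak{k}$ pointwise, so $\mathfrak{m}':=C_{\mathfrak{g}}(Ad(g)\mathfrak{t})=Ad(g)\mathfrak{m}$ still satisfies $[\mathfrak{m}',\mathfrak{m}']=Ad(g)\mathfrak{k}=\mathfrak{k}$, while $\mathfrak{c}_{\mathfrak{h}}\subset\mathfrak{m}'$ and hence $\mathfrak{h}=\mathfrak{k}\oplus\mathfrak{c}_{\mathfrak{h}}\subset\mathfrak{m}'$. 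Replacing $\mathfrak{t}$ by $Ad(g)\mathfrak{t}$ I may assume $\mathfrak{h}\subset\mathfrak{m}=C_{\mathfrak{g}}(\mathfrak{t})$.

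With $\mathfrak{h}\subset\mathfrak{m}$ in hand I would build $J$ following the recipe indicated in the introduction. Extend $\mathfrak{t}$ to a Cartan subalgebra of $\mathfrak{g}$, so that $\mathfrak{m}_{\mathbb{C}}$ is a Levi subalgebra of $\mathfrak{g}_{\mathbb{C}}$; choose a parabolic subalgebra $\mathfrak{p}$ with Levi factor $\mathfrak{m}_{\mathbb{C}}$ and nil-radical $\mathfrak{n}$, so that $\mathfrak{p}\cap\mathfrak{g}=\mathfrak{m}$ and $\mathfrak{g}_{\mathbb{C}}=\tau(\mathfrak{n})\oplus\mathfrak{m}_{\mathbb{C}}\oplus\mathfrak{n}$; then $\dim\mathfrak{g}/\mathfrak{m}=2\dim_{\mathbb{C}}\mathfrak{n}$ is even, so $\dim\mathfrak{m}/\mathfrak{h}$ is even too. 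Pick any complex structure $J_{1}$ on $\mathfrak{m}/\mathfrak{h}$; it is integrable because $[\mathfrak{m},\mathfrak{m}]=\mathfrak{k}\subset\mathfrak{h}$, so Lemma \ref{lmm:12} makes $p^{-1}((\mathfrak{m}/\mathfrak{h})_{+})$ a Lie subalgebra lying in $\mathfrak{m}_{\mathbb{C}}$, and Lemma \ref{lmm:13} gives $p^{-1}((\mathfrak{m}/\mathfrak{h})_{+})\cap\mathfrak{m}=\mathfrak{h}$. Set
\[
\mathfrak{l}:=\mathfrak{n}\oplus p^{-1}((\mathfrak{m}/\mathfrak{h})_{+}).
\]
A direct check (using that $\mathfrak{n}$ is an ideal in $\mathfrak{p}$) shows $\mathfrak{l}$ is a complex Lie subalgebra of $\mathfrak{g}_{\mathbb{C}}$ containing $\mathfrak{h}_{\mathbb{C}}$ with $\dim_{\mathbb{C}}\mathfrak{l}=\frac{\dim\mathfrak{g}+\dim\mathfrak{h}}{2}$, and applying $\tau$ together with $\mathfrak{g}_{\mathbb{C}}=\tau(\mathfrak{n})\oplus\mathfrak{m}_{\mathbb{C}}\oplus\mathfrak{n}$ shows $\mathfrak{l}\cap\mathfrak{g}=p^{-1}((\mathfrak{m}/\mathfrak{h})_{+})\cap\mathfrak{m}=\mathfrak{h}$. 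Thus $p(\mathfrak{l})$ is a complex subspace of $(\mathfrak{g}/\mathfrak{h})_{\mathbb{C}}$ of complex dimension $\frac{1}{2}\dim_{\mathbb{R}}(\mathfrak{g}/\mathfrak{h})$ meeting $\mathfrak{g}/\mathfrak{h}$ in $0$, so by the correspondence of Section 2 it is $(\mathfrak{g}/\mathfrak{h})_{+}$ for a unique complex structure $J$, and $p^{-1}((\mathfrak{g}/\mathfrak{h})_{+})=\mathfrak{l}$. Since $\mathfrak{h}\subset\mathfrak{l}$ and $\mathfrak{l}$ is a subalgebra, each $\overline{ad}(x)$, $x\in\mathfrak{h}$, preserves $(\mathfrak{g}/\mathfrak{h})_{+}$ and its conjugate, hence commutes with $J$, so $J$ is $\mathfrak{h}$-invariant; and $J$ is integrable by Lemma \ref{lmm:12} since $\mathfrak{l}$ is a Lie algebra. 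The main obstacle is precisely the conjugation step: the given $\mathfrak{t}$ must be moved so that $\mathfrak{h}\subset\mathfrak{m}$ without disturbing $[\mathfrak{m},\mathfrak{m}]=[\mathfrak{h},\mathfrak{h}]$, which forces one to conjugate inside the centralizer of $K$; thereafter the remaining claims are routine bookkeeping with the $\tau$-stable decomposition of $\mathfrak{g}_{\mathbb{C}}$ and Lemmas \ref{lmm:12} and \ref{lmm:13}.
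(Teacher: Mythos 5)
Your proposal is correct and follows essentially the same route as the paper: necessity via Theorem \ref{thm:The-parabolic}, and sufficiency by first arranging $\mathfrak{h}\subset\mathfrak{m}$ and then taking the $\mathbf{i}$-eigenspace to be $\mathfrak{n}\oplus p^{-1}((\mathfrak{m}/\mathfrak{h})_{+})$ modulo $\mathfrak{h}_{\mathbb{C}}$, exactly as in Lemma \ref{lmm:21} and Proposition \ref{prop:21}. Your reduction step (conjugating $\mathfrak{t}$ by an element of the identity component of $C_{G}([\mathfrak{h},\mathfrak{h}])$ via conjugacy of maximal tori there) is only a cosmetic variant of the paper's Lemma \ref{lemma:HK}, which instead replaces $\mathfrak{t}$ by a Cartan subalgebra of $C_{\mathfrak{g}}([\mathfrak{h},\mathfrak{h}])$ containing $\mathfrak{c}_{\mathfrak{h}}$ using the same conjugacy.
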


\begin{proof}
The necessity is an immediate consequence of Theorem \ref{thm:The-parabolic}.
Below we will prove the sufficiency by producing a method of finding
integrable $\mathfrak{h}$--invariant complex structures. This needs some preparation which will be carried out below.
\end{proof}

\begin{lemma}
\label{lmm:21} Let $\mathfrak{m}$ be the centralizer of an abelian subalgebra
of $\mathfrak{g}$. Then there exists a parabolic subalgebra $\mathfrak{p}$ of $\mathfrak{g}%
_{\mathbb{C}}$ such that $\mathfrak{m}=\mathfrak{p}\cap\mathfrak{g}$.
\end{lemma}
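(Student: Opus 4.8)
The plan is to reduce this statement to a case where we can invoke standard structure theory of parabolic subalgebras of reductive complex Lie algebras. First I would write $\mathfrak{g}=\mathfrak{c}_{\mathfrak{g}}\oplus[\mathfrak{g},\mathfrak{g}]$ (since $\mathfrak{g}$ is compact, hence reductive) and correspondingly $\mathfrak{g}_{\mathbb{C}}=(\mathfrak{c}_{\mathfrak{g}})_{\mathbb{C}}\oplus[\mathfrak{g},\mathfrak{g}]_{\mathbb{C}}$, with $[\mathfrak{g},\mathfrak{g}]_{\mathbb{C}}$ semisimple. Given the abelian subalgebra $\mathfrak{t}\subset\mathfrak{g}$ and $\mathfrak{m}=C_{\mathfrak{g}}(\mathfrak{t})$, note that $\mathfrak{t}$ always contains $\mathfrak{c}_{\mathfrak{g}}$, so the center of $\mathfrak{g}$ contributes trivially and the problem localizes to $[\mathfrak{g},\mathfrak{g}]$; it therefore suffices to treat the semisimple case, with the parabolic in the general case obtained by adjoining $(\mathfrak{c}_{\mathfrak{g}})_{\mathbb{C}}$.

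So assume $\mathfrak{g}$ is compact semisimple. The key observation is that $\mathfrak{m}=C_{\mathfrak{g}}(\mathfrak{t})$ is itself a compact subalgebra containing a maximal torus of $\mathfrak{g}$: indeed, extend $\mathfrak{t}$ to a maximal abelian subalgebra $\mathfrak{a}$ of $\mathfrak{g}$ (a maximal torus), and since $\mathfrak{a}$ is abelian we have $\mathfrak{a}\subset C_{\mathfrak{g}}(\mathfrak{t})=\mathfrak{m}$. Complexifying, $\mathfrak{m}_{\mathbb{C}}=C_{\mathfrak{g}_{\mathbb{C}}}(\mathfrak{t}_{\mathbb{C}})$ is the centralizer of a subspace of the Cartan subalgebra $\mathfrak{a}_{\mathbb{C}}$, so it decomposes as $\mathfrak{m}_{\mathbb{C}}=\mathfrak{a}_{\mathbb{C}}\oplus\bigoplus_{\alpha\in\Psi}(\mathfrak{g}_{\mathbb{C}})_{\alpha}$ where $\Psi=\{\alpha\in\Phi(\mathfrak{g}_{\mathbb{C}},\mathfrak{a}_{\mathbb{C}})\mid \alpha_{|\mathfrak{t}_{\mathbb{C}}}=0\}$, a root subsystem that is symmetric ($\Psi=-\Psi$). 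The plan is then to choose a system of positive roots $\Phi^{+}$ for $\Phi(\mathfrak{g}_{\mathbb{C}},\mathfrak{a}_{\mathbb{C}})$ compatibly, i.e. so that $\Psi$ is generated as a root system by the simple roots it contains — concretely, pick a generic linear functional on $\mathfrak{t}$ (real-valued, vanishing nowhere on $\mathfrak{t}\setminus\{0\}$ among the restricted roots) to single out which roots outside $\Psi$ are positive — and set $\mathfrak{p}=\mathfrak{a}_{\mathbb{C}}\oplus\bigoplus_{\alpha\in\Psi}(\mathfrak{g}_{\mathbb{C}})_{\alpha}\oplus\bigoplus_{\alpha\in\Phi^{+}\setminus\Psi}(\mathfrak{g}_{\mathbb{C}})_{\alpha}$. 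This $\mathfrak{p}$ contains the Borel $\mathfrak{a}_{\mathbb{C}}\oplus\bigoplus_{\alpha\in\Phi^{+}}(\mathfrak{g}_{\mathbb{C}})_{\alpha}$, hence is parabolic, and by construction $\mathfrak{p}\cap\tau(\mathfrak{p})=\mathfrak{m}_{\mathbb{C}}$, which gives $\mathfrak{p}\cap\mathfrak{g}=\mathfrak{m}$.

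The main obstacle I anticipate is verifying that the chosen positive system makes $\mathfrak{p}$ genuinely a subalgebra — that is, that $\Psi\cup(\Phi^{+}\setminus\Psi)$ is closed under addition of roots whenever the sum is a root. This is exactly the standard fact that a subset of the root system of the form "all roots vanishing on a subspace, together with all roots positive with respect to a functional supported on that subspace" is closed; it follows from choosing the functional defining $\Phi^{+}$ to be of the form $\ell = \ell_0 + \varepsilon\,\ell_1$ where $\ell_0$ vanishes on $\mathfrak{t}$ and $\ell_1$ is strictly positive on the restricted roots determined by $\mathfrak{t}$, with $\varepsilon>0$ small. Once the functional is set up this way, closure is immediate from the sign computation, and the equality $\mathfrak{p}\cap\tau(\mathfrak{p})=\mathfrak{m}_{\mathbb{C}}$ follows since $\alpha\in\Phi^{+}$ and $-\alpha\in\Phi^{+}$ forces $\ell(\alpha)=0$ hence (for generic $\ell_1$) $\alpha\in\Psi$. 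Alternatively, and perhaps more cleanly for the exposition, one can cite the description of parabolic subalgebras as $\mathrm{normalizers}$ of their nil-radicals together with the one-to-one correspondence between parabolics containing a fixed Borel and subsets of simple roots (as recalled in the paragraph preceding Theorem \ref{thm:The-parabolic}), reducing the whole lemma to: every centralizer of a subset of a Cartan subalgebra is a standard Levi factor for some choice of positive roots.
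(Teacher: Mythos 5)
Your argument is essentially the paper's: extend $\mathfrak{t}$ to a maximal abelian subalgebra $\mathfrak{a}\subset\mathfrak{m}$, decompose $\mathfrak{g}_{\mathbb{C}}$ into $\mathfrak{a}_{\mathbb{C}}$-root spaces, use a generic element of $\mathbf{i}\mathfrak{t}$ (equivalently, your functional) to split the roots not vanishing on $\mathfrak{t}$ and set $\mathfrak{p}=\mathfrak{m}_{\mathbb{C}}\oplus\mathfrak{n}$, checking it contains a Borel. The only blemish is the claim that $\mathfrak{t}$ ``always contains $\mathfrak{c}_{\mathfrak{g}}$'' (false as stated, though harmless: replace $\mathfrak{t}$ by $\mathfrak{t}+\mathfrak{c}_{\mathfrak{g}}$ without changing $\mathfrak{m}$; the paper simply skips this reduction and works with the reductive $\mathfrak{g}$ directly).
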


\begin{proof}
Let $\mathfrak{t}$ be an abelian subalgebra of $\mathfrak{g}$ such that
$\mathfrak{m}$ is the centralizer of $\mathfrak{t}$. If $\mathfrak{t}$ is central take $\mathfrak{p}$ =$\mathfrak{g}_{\mathbb{C}}$.
Otherwise we argue as follows:
 Let $\mathfrak{a}$ be a
maximal abelian subalgebra in $\mathfrak{g}$ containing $\mathfrak{t}$. Clearly,
$\mathfrak{a}\subset\mathfrak{m}$. By hypothesis
\[
\mathfrak{m}_{\mathbb{C}}=\mathfrak{a}_{\mathbb{C}}\oplus\bigoplus_{%
\begin{array}
[c]{c}%
\alpha\in\Phi(\mathfrak{g}_{\mathbb{C}},\mathfrak{a}_{\mathbb{C}}),\\
\alpha(\mathfrak{t})=0
\end{array}
}\mathfrak{g}_{\alpha}.
\]
Let $Q=\{\alpha\in\Phi(\mathfrak{g}_{\mathbb{C}},\mathfrak{a}_{\mathbb{C}})\,
|\, \alpha(\mathfrak{t})\neq0\}$. Since $Q$ is finite there exists $h\in
{\bf{i}}\mathfrak{t}$ such that $\alpha(h_{{}})\neq0, \forall \alpha\in Q$. Let $Q^{+}%
=\{\alpha\in Q\, |\, \alpha(h)>0\}.$ Set $\mathfrak{n=\bigoplus}_{\alpha\in
Q^{+}}\mathfrak{g}_{\alpha}$. Let $\mathfrak{b}_{1}$ be a Borel subalgebra in
$\mathfrak{m}_{\mathbb{C}}$. Then $\mathfrak{b}=\mathfrak{b}_{1}%
\oplus\mathfrak{n}$ is a Borel subalgebra of $\mathfrak{g}_{\mathbb{C}}$
contained in $\mathfrak{m}_{\mathbb{C}}\oplus\mathfrak{n}$. Thus
$\mathfrak{p}=\mathfrak{m}_{\mathbb{C}}\oplus\mathfrak{n}$ is a parabolic
subalgebra such that $\mathfrak{p}\cap\mathfrak{g=\mathfrak{m}}$.
\end{proof}
\begin{lemma}\label{lemma:HK} For any subalgebra $\mathfrak{t}\subset \mathfrak{g}$, let $C_{\mathfrak{g}}(\mathfrak{t})$ denote the centralizer of $\mathfrak{t}$.
Let $\mathfrak{h}$ be a Lie subalgebra of $\mathfrak{g}$ such that there
exists an abelian subalgebra, $\mathfrak{t}$, of $\mathfrak{g}$ such that $%
\left[ C_{\mathfrak{g}}(\mathfrak{t}),C_{\mathfrak{g}}(\mathfrak{t})\right] =%
\mathfrak{[\mathfrak{h}},\mathfrak{h}]$ then there exists an abelian subalgebra $\mathfrak{t}%
^{\prime }$  such that $\mathfrak{h}\subset C_{%
\mathfrak{g}}(\mathfrak{t}^{\prime })$ and $\left[ C_{\mathfrak{g}}(%
\mathfrak{t}^{\prime }),C_{\mathfrak{g}}(\mathfrak{t}^{\prime })\right] =%
\mathfrak{[\mathfrak{h}},\mathfrak{h}]$.
\end{lemma}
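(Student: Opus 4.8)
The plan is to show that the abelian subalgebra $\mathfrak{t}$ that centralizes $\mathfrak{m} = C_{\mathfrak g}(\mathfrak t)$ can be conjugated, by an inner automorphism of $\mathfrak g$ coming from a compact group element, so that afterwards $\mathfrak h$ is contained in the centralizer of the conjugated subalgebra, while the commutator condition is preserved. Concretely, set $\mathfrak m = C_{\mathfrak g}(\mathfrak t)$, so $[\mathfrak m,\mathfrak m]=[\mathfrak h,\mathfrak h]=:\mathfrak k$. By Lemma \ref{lmm:21} there is a parabolic subalgebra $\mathfrak p\subset\mathfrak g_{\mathbb C}$ with $\mathfrak p\cap\mathfrak g=\mathfrak m$, and $\mathfrak m$ is the centralizer in $\mathfrak g$ of its own center $\mathfrak c_{\mathfrak m}$. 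Since $\mathfrak m = \mathfrak c_{\mathfrak m}\oplus\mathfrak k$ and $\mathfrak h = \mathfrak c_{\mathfrak h}\oplus\mathfrak k$ with $\mathfrak k$ compact semisimple, both $\mathfrak h$ and $\mathfrak m$ contain $\mathfrak k$ and differ only in their abelian parts; moreover $\mathfrak h\subset\mathfrak m$ is \emph{not} yet assumed, and getting an inclusion of the form $\mathfrak h\subset C_{\mathfrak g}(\mathfrak t')$ is exactly what must be arranged.

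First I would reduce to a statement inside the reductive Lie algebra $\mathfrak m$: the center $\mathfrak c_{\mathfrak h}$ of $\mathfrak h$ commutes with $\mathfrak k=[\mathfrak h,\mathfrak h]$, so if we can find inside $\mathfrak m$ an abelian subalgebra $\mathfrak s$ containing $\mathfrak c_{\mathfrak h}$ (after conjugation) whose centralizer in $\mathfrak m$ is again $\mathfrak m$ — equivalently $\mathfrak s\subset\mathfrak c_{\mathfrak m}$ — then $\mathfrak h = \mathfrak c_{\mathfrak h}\oplus\mathfrak k\subset C_{\mathfrak g}(\mathfrak s)$ provided $C_{\mathfrak g}(\mathfrak s)\supset\mathfrak m$. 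The key point is that $\mathfrak c_{\mathfrak h}$, being an abelian subalgebra of the compact Lie algebra $\mathfrak g$ that commutes with $\mathfrak k$, lies in the centralizer $C_{\mathfrak g}(\mathfrak k)$, which is a reductive subalgebra whose derived algebra is contained in $\mathfrak m$; by conjugacy of maximal tori in the compact group with Lie algebra $C_{\mathfrak g}(\mathfrak k)$, I can move $\mathfrak c_{\mathfrak h}$ by an element $g$ centralizing $\mathfrak k$ into a maximal torus of $C_{\mathfrak g}(\mathfrak k)$ that contains $\mathfrak c_{\mathfrak m}$. Then set $\mathfrak t' := \operatorname{Ad}(g)\mathfrak t$; since $g$ centralizes $\mathfrak k=[\mathfrak m,\mathfrak m]$ and $\mathfrak c_{\mathfrak m}$ is central in $\mathfrak m$, one checks $\operatorname{Ad}(g)$ fixes $\mathfrak c_{\mathfrak m}$ and hence $C_{\mathfrak g}(\mathfrak t') = \operatorname{Ad}(g)C_{\mathfrak g}(\mathfrak t) = \operatorname{Ad}(g)\mathfrak m$, whose derived algebra is $\operatorname{Ad}(g)\mathfrak k = \mathfrak k = [\mathfrak h,\mathfrak h]$, so the commutator condition survives.

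The remaining task is to verify $\mathfrak h\subset C_{\mathfrak g}(\mathfrak t')$. We have $\mathfrak k\subset\operatorname{Ad}(g)\mathfrak m = C_{\mathfrak g}(\mathfrak t')$ automatically, and by the choice of $g$ the conjugated center $\operatorname{Ad}(g)\mathfrak c_{\mathfrak h}$ lies in a maximal torus of $C_{\mathfrak g}(\mathfrak k)$ containing $\mathfrak c_{\mathfrak m}$; since $\mathfrak t'\subset\mathfrak c_{\mathfrak m}$ (as $\mathfrak t$ was central in $\mathfrak m$ and $\operatorname{Ad}(g)$ fixes $\mathfrak c_{\mathfrak m}$), elements of that torus commute with $\mathfrak t'$, so $\operatorname{Ad}(g)\mathfrak c_{\mathfrak h}\subset C_{\mathfrak g}(\mathfrak t')$. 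Here there is a subtlety: what we want is $\mathfrak h$, not $\operatorname{Ad}(g)\mathfrak h$, inside the centralizer — so one should instead define $\mathfrak t' := \operatorname{Ad}(g)^{-1}\mathfrak t$, run the conjugacy argument to bring $\mathfrak c_{\mathfrak h}$ (rather than its conjugate) into position relative to $\mathfrak c_{\mathfrak m}$, and then the same bookkeeping gives $\mathfrak c_{\mathfrak h}\subset C_{\mathfrak g}(\mathfrak t')$ and $[C_{\mathfrak g}(\mathfrak t'),C_{\mathfrak g}(\mathfrak t')]=[\mathfrak h,\mathfrak h]$ directly. I expect the main obstacle to be precisely this careful tracking of which direction one conjugates, together with confirming that the element $g\in C_{\mathfrak g}(\mathfrak k)$ produced by torus-conjugacy genuinely fixes $\mathfrak c_{\mathfrak m}$ pointwise (so that $C_{\mathfrak g}(\mathfrak t')$ stays reductive with the right derived algebra) — this hinges on $\mathfrak c_{\mathfrak m}$ being contained in \emph{every} maximal torus of $C_{\mathfrak g}(\mathfrak k)$ through which we route $\mathfrak c_{\mathfrak h}$, which in turn follows from $\mathfrak c_{\mathfrak m}$ being central in the reductive algebra $C_{\mathfrak g}(\mathfrak k)$ (equivalently, $\mathfrak c_{\mathfrak m}\subset\mathfrak c_{C_{\mathfrak g}(\mathfrak k)}$, a fact I would check via $\mathfrak m = C_{\mathfrak g}(\mathfrak c_{\mathfrak m})$ and $[\mathfrak m,\mathfrak m]=\mathfrak k$).
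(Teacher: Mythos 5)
Your strategy --- pass to the compact reductive algebra $C_{\mathfrak{g}}(\mathfrak{k})$, where $\mathfrak{k}=[\mathfrak{h},\mathfrak{h}]$, and use conjugacy of its maximal abelian subalgebras to bring $\mathfrak{c}_{\mathfrak{h}}$ and $\mathfrak{c}_{\mathfrak{m}}$ into a common position --- is in essence the paper's strategy, but the fact on which you hang the whole verification is false. $\mathfrak{c}_{\mathfrak{m}}$ is \emph{not} central in $C_{\mathfrak{g}}(\mathfrak{k})$ in general: if it were, then $C_{\mathfrak{g}}(\mathfrak{k})\subset C_{\mathfrak{g}}(\mathfrak{c}_{\mathfrak{m}})=\mathfrak{m}=\mathfrak{c}_{\mathfrak{m}}\oplus\mathfrak{k}$, and since $C_{\mathfrak{g}}(\mathfrak{k})$ meets the semisimple algebra $\mathfrak{k}$ only in $0$, this would force $C_{\mathfrak{g}}(\mathfrak{k})=\mathfrak{c}_{\mathfrak{m}}$ to be abelian; but then $\mathfrak{c}_{\mathfrak{h}}\subset C_{\mathfrak{g}}(\mathfrak{k})=\mathfrak{c}_{\mathfrak{m}}$ already, i.e.\ $\mathfrak{h}\subset\mathfrak{m}$ and the lemma is vacuous with $\mathfrak{t}'=\mathfrak{t}$. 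So your claim holds exactly when there is nothing to prove. In a genuine case such as $\mathfrak{g}=\mathfrak{su}(4)$, $\mathfrak{m}=\mathfrak{s}(\mathfrak{u}(2)\oplus\mathfrak{u}(1)\oplus\mathfrak{u}(1))$, $\mathfrak{k}=\mathfrak{su}(2)$, one has $C_{\mathfrak{g}}(\mathfrak{k})\cong\mathfrak{u}(2)$, nonabelian, with $\mathfrak{c}_{\mathfrak{m}}$ a noncentral Cartan subalgebra. The companion claim that $Ad(g)$ fixes $\mathfrak{c}_{\mathfrak{m}}$ fails for the same reason, and would moreover be self-defeating: an element fixing both $\mathfrak{c}_{\mathfrak{m}}$ and $\mathfrak{k}$ pointwise fixes all of $\mathfrak{m}$, so the conjugation would move nothing and you would be asserting $\mathfrak{h}\subset\mathfrak{m}$ outright, which is not a hypothesis.

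The statement you actually need --- and the one the paper proves as its first step --- is the weaker one that $\mathfrak{c}_{\mathfrak{m}}$ is a \emph{Cartan} subalgebra of $C_{\mathfrak{g}}(\mathfrak{k})$, i.e.\ self-centralizing there: if $X\in C_{\mathfrak{g}}(\mathfrak{k})$ and $[X,\mathfrak{c}_{\mathfrak{m}}]=0$, then $[X,\mathfrak{m}]=[X,\mathfrak{c}_{\mathfrak{m}}\oplus\mathfrak{k}]=0$, so $X\in C_{\mathfrak{g}}(\mathfrak{t})=\mathfrak{m}$ and in fact $X\in\mathfrak{c}_{\mathfrak{m}}$. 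Granting this, choose a Cartan subalgebra $\mathfrak{w}$ of $C_{\mathfrak{g}}(\mathfrak{k})$ containing the abelian subalgebra $\mathfrak{c}_{\mathfrak{h}}$ (which lies in $C_{\mathfrak{g}}(\mathfrak{k})$ because it commutes with $[\mathfrak{h},\mathfrak{h}]$), and use conjugacy of Cartan subalgebras in the connected compact group $U$ with Lie algebra $C_{\mathfrak{g}}(\mathfrak{k})$ to find $u\in U$ with $Ad(u)\mathfrak{w}=\mathfrak{c}_{\mathfrak{m}}$. Since $Ad(u)$ is the identity on $\mathfrak{k}$, applying $Ad(u)^{-1}$ to the identity $C_{\mathfrak{g}}(\mathfrak{c}_{\mathfrak{m}})=\mathfrak{c}_{\mathfrak{m}}\oplus\mathfrak{k}$ yields $C_{\mathfrak{g}}(\mathfrak{w})=\mathfrak{w}\oplus\mathfrak{k}\supset\mathfrak{c}_{\mathfrak{h}}\oplus\mathfrak{k}=\mathfrak{h}$ with derived algebra $\mathfrak{k}$, so $\mathfrak{t}'=\mathfrak{w}$ works. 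Nothing needs to fix $\mathfrak{c}_{\mathfrak{m}}$; one only transports the centralizer identity along the conjugation, and this also disposes of your direction-of-conjugation worry.
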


\begin{proof}
Let $\mathfrak{c}$ be the center of $C_{\mathfrak{g}}(\mathfrak{t})$ then we
assert that $\mathfrak{c}$ is a Cartan subalgebra of $C_{\mathfrak{g}}([%
\mathfrak{h},\mathfrak{h}]).$ Indeed, if $X\in C_{\mathfrak{g}}([\mathfrak{h}%
,\mathfrak{h}])$ and $[X,\mathfrak{c}]=0$ then since, $C_{\mathfrak{g}}(%
\mathfrak{t})=\mathfrak{c}\oplus \mathfrak{[\mathfrak{h}},\mathfrak{h}],$ $%
[X,C_{\mathfrak{g}}(\mathfrak{t})]=0,$ so $X\in \mathfrak{c}$. We also note
that if $\mathfrak{a}$ is a Cartan subalgebra of $C_{\mathfrak{g}}([%
\mathfrak{h},\mathfrak{h}])$ then $\mathfrak{a}\oplus \lbrack \mathfrak{h},%
\mathfrak{h}]=C_{\mathfrak{g}}(\mathfrak{a})$. To see this let $G$ be a
connected compact Lie group with Lie algebra $\mathfrak{g}$ and let $U$ be
the connected subgroup of $G$ with Lie algebra $C_{\mathfrak{g}}([\mathfrak{h%
},\mathfrak{h}])$. Then there exists $u\in U\subset G$ such that $Ad(u)%
\mathfrak{a=\mathfrak{c}}$. Thus $Ad(u)C_{\mathfrak{g}}(\mathfrak{a})=C_{%
\mathfrak{g}}(\mathfrak{c})$. Let $\mathfrak{c}^{\prime }$ be the center of $%
\mathfrak{h}$ and let $\mathfrak{w}$ be a Cartan subalgebra of $C_{\mathfrak{%
g}}([\mathfrak{h},\mathfrak{h}])$ containing $\mathfrak{c}^{\prime }$. Then $%
C_{\mathfrak{g}}(\mathfrak{w})\supset \mathfrak{c}^{\prime }\oplus \lbrack
\mathfrak{h},\mathfrak{h}]=\mathfrak{h}$ and $[C_{\mathfrak{g}}(\mathfrak{w}%
),C_{\mathfrak{g}}(\mathfrak{w})]=[\mathfrak{h},\mathfrak{h}]$.
\end{proof}
There is a  geometric proof of the Lie group analogue of this lemma in \cite{HK}.

We now give the promised construction. Let $\mathfrak{h}$ be a subalgebra of
$\mathfrak{g}$ such that there exists $\mathfrak{m}$, the centralizer of an
abelian subalgebra of $\mathfrak{g}$, such that $[\mathfrak{m}, \mathfrak{m]=}%
[\mathfrak{h}, \mathfrak{h]}$. The previous lemma allows us to assume that
$\frak{m}\supset \frak h$.  Let $P(\mathfrak{h)}$ be the set of parabolic
subalgebras, $\mathfrak{p}$, of $\mathfrak{g}_{\mathbb{C}}$ such that if
$\mathfrak{m}_{\mathfrak{p}}=\mathfrak{p\cap\mathfrak{g}}$ then  $\mathfrak{m}_{\mathfrak{p}} \supset \mathfrak{h} $ and $[\mathfrak{m}%
_{\mathfrak{p}},\mathfrak{m}_{\mathfrak{p}}]=[\mathfrak{h},\mathfrak{h}]$.  The above two lemmata imply that $P(\mathfrak{h)}$ is not empty.
Let $\mathfrak{p}\in P(\mathfrak{h})$ and  denote its nil-radical by $\mathfrak{n}_{\mathfrak{p}}$.
If $J_{1}$ is an $\mathfrak{h}$--invariant complex structure on
$\mathfrak{m}_{\mathfrak{p}}/\mathfrak{h}$ then the structure is integrable
since $\mathfrak{m}_{\mathfrak{p}}/\mathfrak{h}$ is an abelian Lie algebra.
The sufficiency in the above theorem now follows from the result below.

\begin{proposition}
\label{prop:21} There exists an integrable $\mathfrak{h}$--invariant complex
structure, $J=J(\mathfrak{p},J_{1})$ on $\mathfrak{g}/\mathfrak{h}$ such that
the ${\bf{i}}$ eigenspace for $J$ is
\[
p\left(p^{-1}\left(  (\mathfrak{m}_{\mathfrak{p}}/\mathfrak{h})  _{+}\right)%
\oplus\mathfrak{n}_{\mathfrak{p}}\right).
\]

\end{proposition}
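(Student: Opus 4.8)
The plan is to construct the candidate $\mathbf{i}$-eigenspace explicitly, verify it has the right dimension and meets $\mathfrak{g}$ trivially (so that it defines a genuine almost complex structure via the observation in Section 2), check that it is a subalgebra (so that the structure is integrable by Lemma \ref{lmm:12}), and finally confirm that the induced $J$ is $\mathfrak{h}$-invariant and restricts to $J_1$ on $\mathfrak{m}_{\mathfrak{p}}/\mathfrak{h}$.

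First I would set $X = p^{-1}\left((\mathfrak{m}_{\mathfrak{p}}/\mathfrak{h})_+\right) \oplus \mathfrak{n}_{\mathfrak{p}}$, where $(\mathfrak{m}_{\mathfrak{p}}/\mathfrak{h})_+ \subset (\mathfrak{m}_{\mathfrak{p}})_{\mathbb{C}}/\mathfrak{h}_{\mathbb{C}}$ is the $\mathbf{i}$-eigenspace of $J_1$. Note $p^{-1}((\mathfrak{m}_{\mathfrak{p}}/\mathfrak{h})_+)$ is taken inside $(\mathfrak{m}_{\mathfrak{p}})_{\mathbb{C}}$, and its image in $\mathfrak{g}_{\mathbb{C}}/\mathfrak{h}_{\mathbb{C}}$ is exactly $p(X)$ since $\mathfrak{n}_{\mathfrak{p}} \cap \mathfrak{m}_{\mathbb{C}} = 0$. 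For the dimension count: $\dim_{\mathbb{C}} p^{-1}((\mathfrak{m}_{\mathfrak{p}}/\mathfrak{h})_+) = \frac{\dim\mathfrak{m}_{\mathfrak{p}} - \dim\mathfrak{h}}{2} + \dim\mathfrak{h}$ by the exact sequence argument used in Proposition \ref{prop:11}, and $\dim_{\mathbb{C}}\mathfrak{n}_{\mathfrak{p}} = \frac{\dim\mathfrak{g} - \dim\mathfrak{m}_{\mathfrak{p}}}{2}$ since $\mathfrak{p} = (\mathfrak{m}_{\mathfrak{p}})_{\mathbb{C}} \oplus \mathfrak{n}_{\mathfrak{p}}$ is the Langlands decomposition and $\dim_{\mathbb{C}}\mathfrak{p} = \frac{\dim\mathfrak{g} + l_{\mathfrak{g}}}{2}$ while $\dim_{\mathbb{C}}(\mathfrak{m}_{\mathfrak{p}})_{\mathbb{C}} = \dim\mathfrak{m}_{\mathfrak{p}}$ is not quite the shape I want — better to use $\dim_{\mathbb{C}}\mathfrak{n}_{\mathfrak{p}} = \dim_{\mathbb{C}}\mathfrak{p} - \dim_{\mathbb{C}}(\mathfrak{m}_{\mathfrak{p}})_{\mathbb{C}} = \frac{\dim\mathfrak{g} - \dim\mathfrak{m}_{\mathfrak{p}}}{2}$. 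Adding, $\dim_{\mathbb{C}} p(X) = \frac{\dim\mathfrak{m}_{\mathfrak{p}} - \dim\mathfrak{h}}{2} + \frac{\dim\mathfrak{g} - \dim\mathfrak{m}_{\mathfrak{p}}}{2} = \frac{\dim\mathfrak{g} - \dim\mathfrak{h}}{2} = \frac{1}{2}\dim_{\mathbb{R}}(\mathfrak{g}/\mathfrak{h})$, as required.

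Next I would verify $p(X) \cap (\mathfrak{g}/\mathfrak{h}) = 0$: if $\xi \in p(X)$ is real, write it via the decomposition; its $\mathfrak{n}_{\mathfrak{p}}$-component must vanish because $\mathfrak{n}_{\mathfrak{p}} \cap \tau(\mathfrak{n}_{\mathfrak{p}}) = 0$ (nilradicals of opposite parabolics intersect trivially), reducing to the statement that $(\mathfrak{m}_{\mathfrak{p}}/\mathfrak{h})_+ \cap (\mathfrak{m}_{\mathfrak{p}}/\mathfrak{h}) = 0$, which holds since $J_1$ is a complex structure. By the uniqueness observation in Section 2 there is then a unique complex structure $J$ on $\mathfrak{g}/\mathfrak{h}$ with $(\mathfrak{g}/\mathfrak{h})_+ = p(X)$. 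Integrability follows from Lemma \ref{lmm:12} once I show $p^{-1}((\mathfrak{g}/\mathfrak{h})_+) = X + \mathfrak{h}_{\mathbb{C}} = p^{-1}((\mathfrak{m}_{\mathfrak{p}}/\mathfrak{h})_+) \oplus \mathfrak{n}_{\mathfrak{p}}$ is a subalgebra: $p^{-1}((\mathfrak{m}_{\mathfrak{p}}/\mathfrak{h})_+)$ is a subalgebra of $(\mathfrak{m}_{\mathfrak{p}})_{\mathbb{C}}$ because $J_1$ is integrable (Lemma \ref{lmm:12} applied to the pair $(\mathfrak{m}_{\mathfrak{p}}, \mathfrak{h})$), it normalizes $\mathfrak{n}_{\mathfrak{p}}$ since $(\mathfrak{m}_{\mathfrak{p}})_{\mathbb{C}} \subset \mathfrak{p}$ and $\mathfrak{p}$ normalizes its nilradical, and $\mathfrak{n}_{\mathfrak{p}}$ is itself a subalgebra. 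For $\mathfrak{h}$-invariance, $[\mathfrak{h}, X] \subset X$: $\mathfrak{h}_{\mathbb{C}} \subset p^{-1}((\mathfrak{m}_{\mathfrak{p}}/\mathfrak{h})_+)$ stabilizes that summand, and $[\mathfrak{h}, \mathfrak{n}_{\mathfrak{p}}] \subset \mathfrak{n}_{\mathfrak{p}}$ since $\mathfrak{h} \subset \mathfrak{m}_{\mathfrak{p}} \subset \mathfrak{p}$. Finally the restriction: under the identification $\mathfrak{m}_{\mathfrak{p}}/\mathfrak{h} \hookrightarrow \mathfrak{g}/\mathfrak{h}$ (legitimate because $\mathfrak{h} \subset \mathfrak{m}_{\mathfrak{p}}$), the $\mathbf{i}$-eigenspace of $J|_{\mathfrak{m}_{\mathfrak{p}}/\mathfrak{h}}$ is $p(X) \cap ((\mathfrak{m}_{\mathfrak{p}})_{\mathbb{C}}/\mathfrak{h}_{\mathbb{C}}) = (\mathfrak{m}_{\mathfrak{p}}/\mathfrak{h})_+$ (the $\mathfrak{n}_{\mathfrak{p}}$-part drops out by the direct-sum decomposition of $\mathfrak{g}_{\mathbb{C}}$), which is precisely the $\mathbf{i}$-eigenspace of $J_1$; hence $J_1 = J|_{\mathfrak{m}_{\mathfrak{p}}/\mathfrak{h}}$.

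The main obstacle I anticipate is bookkeeping with the three nested domains in which $p^{-1}$ is applied — inside $(\mathfrak{m}_{\mathfrak{p}})_{\mathbb{C}}$, inside $\mathfrak{g}_{\mathbb{C}}$, and the passage between them — together with making sure the complementary directions ($\mathfrak{n}_{\mathfrak{p}}$ and $\tau(\mathfrak{n}_{\mathfrak{p}})$) genuinely decouple, so that neither the dimension count nor the $p(X) \cap \mathfrak{g}/\mathfrak{h} = 0$ argument secretly double-counts $\mathfrak{h}_{\mathbb{C}}$. The one genuinely structural input is that $\mathfrak{p}$ normalizes $\mathfrak{n}_{\mathfrak{p}}$ and $(\mathfrak{m}_{\mathfrak{p}})_{\mathbb{C}} = \mathfrak{p} \cap \tau(\mathfrak{p})$, both recalled in the preamble to Theorem \ref{thm:The-parabolic}; everything else is the exact-sequence dimension arithmetic already rehearsed in Propositions \ref{prop:11} and \ref{prop:13}.
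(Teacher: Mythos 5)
Your proposal is correct and follows essentially the same route as the paper's proof: establish the dimension count for $(\mathfrak{m}_{\mathfrak{p}}/\mathfrak{h})_{+}\oplus p(\mathfrak{n}_{\mathfrak{p}})$, check it meets $\mathfrak{g}/\mathfrak{h}$ trivially, and deduce integrability from Lemma \ref{lmm:12} since $p^{-1}\left((\mathfrak{m}_{\mathfrak{p}}/\mathfrak{h})_{+}\right)$ is a subalgebra normalizing $\mathfrak{n}_{\mathfrak{p}}$. You merely spell out a few steps the paper leaves implicit (the trivial-intersection argument via the triangular decomposition, the $\mathfrak{h}$--invariance, and the restriction to $J_{1}$), which is harmless.
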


\begin{proof}
Since $p^{-1}\left( ( \mathfrak{m}_{\mathfrak{p}}/\mathfrak{h})
_{+}\right)\supset\mathfrak{h}_{\mathbb{C}}$ we see that $$p^{-1}(p(p^{-1}\left((
\mathfrak{m}_{\mathfrak{p}}/\mathfrak{h})  _{+}\right)\oplus\mathfrak{n}%
_{\mathfrak{p}})=p^{-1}\left( ( \mathfrak{m}_{\mathfrak{p}}/\mathfrak{h}%
)  _{+}\right)\oplus\mathfrak{n}_{\mathfrak{p}}.$$ Hence%
\[
p\left(p^{-1}\left(  (\mathfrak{m}_{\mathfrak{p}}/\mathfrak{h})  _{+}\right)%
\oplus\mathfrak{n}_{\mathfrak{p}}\right)=\left(  \mathfrak{m}_{\mathfrak{p}%
}/\mathfrak{h}\right)  _{+}\oplus p(\mathfrak{n}_{\mathfrak{p}})
\]
and $p$ is injective on $\mathfrak{n}_{\mathfrak{p}}$. Now,
\begin{eqnarray*}
\dim_{\mathbb{C}}\mathfrak{n}_{\mathfrak{p}}&=&\frac{n-\dim\mathfrak{m}%
_{\mathfrak{p}}}{2},\\
  \dim_{\mathbb{C}}\left(  \mathfrak{m}_{\mathfrak{p}}/\mathfrak{h}\right)_{+}&=&\frac{\dim\mathfrak{m}_{\mathfrak{p}}%
-\dim\mathfrak{h}}{2}.
\end{eqnarray*}
The first equation above follows from expressing $\mathfrak{p}$ in terms of the root spaces and the corresponding expression for $\mathfrak{n}_{\mathfrak{p}}$.
Thus
\[
\dim_{\mathbb{C}}p\left(p^{-1}\left( ( \mathfrak{m}_{\mathfrak{p}}/\mathfrak{h}%
)  _{+}\right)\oplus\mathfrak{n}_{\mathfrak{p}}\right)=\frac{n-\dim\mathfrak{h}}{2}.
\]
We also note that %
\[
p\left(p^{-1}\left( ( \mathfrak{m}_{\mathfrak{p}}/\mathfrak{h})  _{+}\right) \oplus \mathfrak{n}_{\mathfrak{p}}\right)\cap\mathfrak{g}/\mathfrak{h}=0.
\]
Hence $\left(\mathfrak{m}_{\mathfrak{p}}/\mathfrak{h}\right)_{+}\oplus p(\mathfrak{n}_{\mathfrak{p}})$ defines a complex structure on $(\mathfrak{g}/\mathfrak{h})_\mathbb{C}$.
Also since $J_1$ is integrable $p^{-1}\left((\mathfrak{m}_{\mathfrak{p}}/\mathfrak{h})_{+}\right)$ is a Lie subalgebra of $(\mathfrak{m}_\mathfrak{p})_{\mathbb{C}}$, which also normalizes $\mathfrak{n}_{\mathfrak{p}}$.
Thus  $p^{-1}\left((\mathfrak{m}_{\mathfrak{p}}/\mathfrak{h})_{+}\right) \oplus \mathfrak{n}_{\mathfrak{p}}$ is a Lie algebra. By  Lemma \ref{lmm:12},  its induced complex structure $J$ is integrable.
\end{proof}

\begin{theorem}
\label{thm:22} \label{Param2}If $\mathfrak{g}\supset\mathfrak{h}$ satisfy the
conditions of Theorem \ref{Wang} then the set of integrable $\mathfrak{h}
$--invariant complex structures on $\mathfrak{g}/\mathfrak{h}$ is the set
$\{P(\mathfrak{p},J_{1})\, |\, \mathfrak{p}\in P(\mathfrak{h}),J_{1}$ a
complex structure on $\mathfrak{m}_{\mathfrak{p}}/\mathfrak{h}\}$.
\end{theorem}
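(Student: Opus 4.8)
The plan is to establish the two inclusions of the asserted set equality, each of which reduces to results already proved in the excerpt.

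\emph{The inclusion $\supseteq$.} Here I would simply quote Proposition \ref{prop:21}. For any $\mathfrak{p}\in P(\mathfrak{h})$ the space $\mathfrak{m}_{\mathfrak{p}}/\mathfrak{h}$ is abelian and $\mathfrak{h}$ acts trivially on it (this is the computation in the proof of Corollary \ref{BasicObs}, using $\mathfrak{h}\subset\mathfrak{m}_{\mathfrak{p}}$ and $[\mathfrak{m}_{\mathfrak{p}},\mathfrak{m}_{\mathfrak{p}}]=[\mathfrak{h},\mathfrak{h}]$), so every complex structure $J_{1}$ on $\mathfrak{m}_{\mathfrak{p}}/\mathfrak{h}$ is automatically $\mathfrak{h}$-invariant and integrable; its dimension is even because $\dim\mathfrak{g}-\dim\mathfrak{m}_{\mathfrak{p}}$ is even (from the root-space decomposition $\mathfrak{g}_{\mathbb{C}}=(\mathfrak{m}_{\mathfrak{p}})_{\mathbb{C}}\oplus\mathfrak{n}_{\mathfrak{p}}\oplus\tau(\mathfrak{n}_{\mathfrak{p}})$) while $\dim\mathfrak{g}-\dim\mathfrak{h}$ is even by the hypothesis of Theorem \ref{Wang}. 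Proposition \ref{prop:21} then produces $J(\mathfrak{p},J_{1})$, an integrable $\mathfrak{h}$-invariant complex structure on $\mathfrak{g}/\mathfrak{h}$, and Lemmata \ref{lmm:21} and \ref{lemma:HK} guarantee $P(\mathfrak{h})\neq\emptyset$. Hence every member of the right-hand set is a bona fide integrable $\mathfrak{h}$-invariant complex structure.

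\emph{The inclusion $\subseteq$.} Let $J$ be an arbitrary integrable $\mathfrak{h}$-invariant complex structure on $\mathfrak{g}/\mathfrak{h}$ and put $\mathfrak{l}=p^{-1}((\mathfrak{g}/\mathfrak{h})_{+})$. By Theorem \ref{thm:The-parabolic} the normalizer $\mathfrak{p}$ of $\mathfrak{l}$ in $\mathfrak{g}_{\mathbb{C}}$ is a parabolic subalgebra with $\mathfrak{m}:=\mathfrak{p}\cap\mathfrak{g}\supset\mathfrak{h}$ and $[\mathfrak{m},\mathfrak{m}]=[\mathfrak{h},\mathfrak{h}]$; thus $\mathfrak{p}\in P(\mathfrak{h})$ and $\mathfrak{m}=\mathfrak{m}_{\mathfrak{p}}$. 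By Corollary \ref{BasicObs}, $J$ preserves $\mathfrak{m}_{\mathfrak{p}}/\mathfrak{h}$, so $J_{1}:=J|_{\mathfrak{m}_{\mathfrak{p}}/\mathfrak{h}}$ is a complex structure on $\mathfrak{m}_{\mathfrak{p}}/\mathfrak{h}$, and moreover
\[
\mathfrak{l}=p^{-1}\big((\mathfrak{m}_{\mathfrak{p}}/\mathfrak{h})_{+}\big)\oplus\mathfrak{n}_{\mathfrak{p}},
\]
where $(\mathfrak{m}_{\mathfrak{p}}/\mathfrak{h})_{+}$ is the ${\bf{i}}$-eigenspace of $J_{1}$, equivalently of $J$ since $J$ restricts to $J_{1}$. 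Applying $p$ (and noting $\mathfrak{h}_{\mathbb{C}}\subset p^{-1}((\mathfrak{m}_{\mathfrak{p}}/\mathfrak{h})_{+})$), the ${\bf{i}}$-eigenspace of $J$ is $p(\mathfrak{l})=p\big(p^{-1}((\mathfrak{m}_{\mathfrak{p}}/\mathfrak{h})_{+})\oplus\mathfrak{n}_{\mathfrak{p}}\big)$, which is precisely the ${\bf{i}}$-eigenspace prescribed for $J(\mathfrak{p},J_{1})$ in Proposition \ref{prop:21}. Since a complex structure on a real vector space is uniquely determined by its ${\bf{i}}$-eigenspace in the complexification (as observed at the beginning of Section 2), $J=J(\mathfrak{p},J_{1})$, so $J$ belongs to the claimed set.

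The conceptual content was already carried out in Theorem \ref{thm:The-parabolic} and Corollary \ref{BasicObs}, so I do not expect a genuine obstacle. The only point requiring care is bookkeeping: checking that the parabolic subalgebra canonically attached to a given $J$ really lies in $P(\mathfrak{h})$ (which is exactly the content of Theorem \ref{thm:The-parabolic}), and that the two ${\bf{i}}$-eigenspaces coincide \emph{on the nose} — not merely up to isomorphism — so that the reconstruction $J(\mathfrak{p},J_{1})$ returns the original $J$ and not some conjugate of it.
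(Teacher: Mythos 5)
Your proof is correct and follows exactly the route the paper intends: the paper disposes of this theorem in one line ("a direct consequence of Corollary \ref{BasicObs}"), and your two inclusions are precisely the expansion of that remark — Proposition \ref{prop:21} gives surjectivity of the construction onto the right-hand set, while Theorem \ref{thm:The-parabolic} together with the eigenspace decomposition of Corollary \ref{BasicObs} recovers any given $J$ as $J(\mathfrak{p},J_{1})$ on the nose. Your write-up supplies the bookkeeping (the $\mathfrak{h}$-invariance and evenness of $\mathfrak{m}_{\mathfrak{p}}/\mathfrak{h}$, and the identification of ${\bf{i}}$-eigenspaces) that the paper leaves implicit; no gaps.
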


\begin{proof}
This is a direct consequence of Corollary \ref{BasicObs}.
\end{proof}
\begin{theorem}
\label{LieAlgIrr}
Let $\mathfrak{g}$ and $\mathfrak{h}$ be as in the previous theorem. If
$\mathfrak{g}/\mathfrak{h}(\ne 0)$ has an $\mathfrak{h}$--invariant, integrable
complex structure and $\mathfrak{h}$ acts effectively and irreducibly on $%
\mathfrak{g}/\mathfrak{h}$ then $\mathfrak{g}$ is semisimple and the pair $(%
\mathfrak{g},\mathfrak{h})$ is an irreducible Hermitian symmetric pair of
compact type.
\end{theorem}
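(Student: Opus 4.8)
\textbf{Proof plan for Theorem \ref{LieAlgIrr}.}
The plan is to combine the fibration structure coming from Theorems \ref{thm:main1-wang}--\ref{thm:The-parabolic} with the irreducibility hypothesis to collapse all the freedom in the construction. First I would invoke Theorem \ref{thm:The-parabolic} (and Corollary \ref{coro:12}) to produce the canonical subalgebra $\mathfrak{m} = \mathfrak{p}\cap\mathfrak{g}$ with $\mathfrak{h}\subset\mathfrak{m}$ and $[\mathfrak{m},\mathfrak{m}]=[\mathfrak{h},\mathfrak{h}]$, and the associated real subspace $\mathfrak{m}/\mathfrak{h}\subset\mathfrak{g}/\mathfrak{h}$, which by Corollary \ref{BasicObs} is $J$-invariant. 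Since $\mathfrak{h}$ acts on $\mathfrak{g}/\mathfrak{h}$ and $\mathfrak{m}/\mathfrak{h}$ is an $\overline{ad}(\mathfrak{h})$-invariant subspace (indeed, by Corollary \ref{BasicObs}, $\mathfrak{m}/\mathfrak{h}$ is precisely the subspace on which $\overline{ad}(\mathfrak{m})$, hence $\overline{ad}(\mathfrak{h})$, acts trivially, using $[\mathfrak{m},\mathfrak{m}]=[\mathfrak{h},\mathfrak{h}]$), irreducibility forces either $\mathfrak{m}/\mathfrak{h}=\mathfrak{g}/\mathfrak{h}$ or $\mathfrak{m}/\mathfrak{h}=0$. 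The first case would make $\overline{ad}(\mathfrak{h})$ act trivially on all of $\mathfrak{g}/\mathfrak{h}$, contradicting effectiveness (as $\mathfrak{g}/\mathfrak{h}\ne 0$); so $\mathfrak{m}=\mathfrak{h}$, and therefore $\mathfrak{p}$ is a parabolic subalgebra with $\mathfrak{p}\cap\mathfrak{g}=\mathfrak{h}$ and $[\mathfrak{h},\mathfrak{h}]=[\mathfrak{m},\mathfrak{m}]$ with $\mathfrak{h}$ already equal to the centralizer of its own center.

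Next I would analyze the center. Write $\mathfrak{h}=\mathfrak{k}\oplus\mathfrak{c}_\mathfrak{h}$ with $\mathfrak{k}=[\mathfrak{h},\mathfrak{h}]$ semisimple. The center $\mathfrak{c}_\mathfrak{g}$ of $\mathfrak{g}$ acts trivially on $\mathfrak{g}/\mathfrak{h}$, so by effectiveness $\mathfrak{c}_\mathfrak{g}\subset\mathfrak{h}$; but $\mathfrak{c}_\mathfrak{g}$ is then a subspace of $\mathfrak{g}/\mathfrak{h}$'s complement acted on trivially, and more to the point $\mathfrak{c}_\mathfrak{g}\subset\mathfrak{c}_\mathfrak{h}\subset\mathfrak{m}=\mathfrak{h}$; combined with $\mathfrak{m}/\mathfrak{h}=0$ one gets $\mathfrak{c}_\mathfrak{g}\subset\mathfrak{h}$ contributes nothing, and in fact decomposing $\mathfrak{g}=\mathfrak{c}_\mathfrak{g}\oplus[\mathfrak{g},\mathfrak{g}]$ the $\mathfrak{h}$-module $\mathfrak{g}/\mathfrak{h}$ is a quotient of $[\mathfrak{g},\mathfrak{g}]$; irreducibility and the existence of the complex structure $J$ (which commutes with $\overline{ad}(\mathfrak{h})$) then shows $\mathfrak{c}_\mathfrak{g}=0$, i.e. $\mathfrak{g}$ is semisimple. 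Here I would use that $J$ being $\mathfrak{h}$-invariant and $\mathfrak{g}/\mathfrak{h}$ being $\mathfrak{h}$-irreducible over $\mathbb{R}$ means $\mathfrak{g}/\mathfrak{h}$ is an irreducible complex $\mathfrak{h}$-module (otherwise a proper complex, hence real, submodule would violate real irreducibility); this will pin down $\mathfrak{h}_\mathbb{C}$ inside $\mathfrak{g}_\mathbb{C}$.

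The heart of the argument is then to show that $(\mathfrak{g},\mathfrak{h})$ is Hermitian symmetric. With $\mathfrak{g}$ semisimple and $\mathfrak{p}\cap\mathfrak{g}=\mathfrak{h}$, $\mathfrak{p}$ is a parabolic subalgebra of $\mathfrak{g}_\mathbb{C}$ whose reductive part $\mathfrak{p}\cap\tau(\mathfrak{p})=\mathfrak{h}_\mathbb{C}$. The nilradical $\mathfrak{n}$ of $\mathfrak{p}$ satisfies $p^{-1}((\mathfrak{g}/\mathfrak{h})_+)=p^{-1}((\mathfrak{m}/\mathfrak{h})_+)\oplus\mathfrak{n}=\mathfrak{h}_\mathbb{C}\oplus\mathfrak{n}$ by Corollary \ref{BasicObs} (since $\mathfrak{m}=\mathfrak{h}$ kills the torus factor), so $(\mathfrak{g}/\mathfrak{h})_+\cong\mathfrak{n}$ as an $\mathfrak{h}_\mathbb{C}$-module, and $\mathfrak{g}_\mathbb{C}=\mathfrak{n}\oplus\mathfrak{h}_\mathbb{C}\oplus\bar{\mathfrak{n}}$. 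Irreducibility of $\mathfrak{n}$ as an $\mathfrak{h}_\mathbb{C}$-module means $\mathfrak{p}$ is a \emph{maximal} parabolic subalgebra, corresponding to deleting a single simple root $\alpha_0$, and the $\mathbb{Z}$-grading of $\mathfrak{g}_\mathbb{C}$ determined by $\mathfrak{p}$ has only three pieces $\mathfrak{g}_\mathbb{C}=\mathfrak{g}_{-1}\oplus\mathfrak{g}_0\oplus\mathfrak{g}_1$ with $\mathfrak{g}_1=\mathfrak{n}$, $\mathfrak{g}_0=\mathfrak{h}_\mathbb{C}$, $\mathfrak{g}_{-1}=\bar{\mathfrak{n}}$ — which is exactly the condition that $\alpha_0$ occurs with coefficient $1$ in the highest root, i.e. the condition for Hermitian symmetric type. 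I expect the main obstacle to be this last step: showing that $\mathfrak{n}$ irreducible forces the grading to be a $3$-term (abelian nilradical) grading rather than a longer one. I would handle it by noting that $[\mathfrak{n},\mathfrak{n}]$ is an $\mathfrak{h}_\mathbb{C}$-submodule of $\mathfrak{n}$ (as $\mathfrak{h}_\mathbb{C}=\mathfrak{g}_0$ normalizes the grading), so either $[\mathfrak{n},\mathfrak{n}]=0$ or $[\mathfrak{n},\mathfrak{n}]=\mathfrak{n}$; the latter is impossible since $\mathfrak{n}$ is nilpotent, hence $\mathfrak{n}$ is abelian, the grading stops at $\pm 1$, and we are in the Hermitian symmetric situation. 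Finally I would observe that the Cartan involution $\theta$ with $+1$-eigenspace $\mathfrak{h}$ and $-1$-eigenspace $p(\mathfrak{n}\oplus\bar{\mathfrak{n}})\cap\mathfrak{g}$ is a Lie algebra automorphism precisely because $\mathfrak{n}$ (and $\bar{\mathfrak{n}}$) is abelian and $[\mathfrak{n},\bar{\mathfrak{n}}]\subset\mathfrak{h}_\mathbb{C}$, so $(\mathfrak{g},\mathfrak{h},\theta)$ is an orthogonal symmetric Lie algebra of compact type, irreducible by hypothesis, with the invariant complex structure $J$ making it Hermitian symmetric; semisimplicity rules out the Euclidean factor, completing the proof.
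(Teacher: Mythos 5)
Your proposal is correct and its skeleton matches the paper's: reduce to $\mathfrak{m}=\mathfrak{h}$ via irreducibility and effectiveness, show the nilradical $\mathfrak{n}$ is abelian because $[\mathfrak{n},\mathfrak{n}]$ is a proper $\mathfrak{h}$--invariant subspace of the irreducible module $\mathfrak{n}\cong(\mathfrak{g}/\mathfrak{h})_{+}$, establish $[\bar{\mathfrak{n}},\mathfrak{n}]\subset\mathfrak{h}_{\mathbb{C}}$, and then read off the symmetric pair from the involution $\theta=\operatorname{id}$ on $\mathfrak{h}$, $-\operatorname{id}$ on $(\mathfrak{n}\oplus\bar{\mathfrak{n}})\cap\mathfrak{g}$. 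The genuine divergence is in the step $[\bar{\mathfrak{n}},\mathfrak{n}]\subset\mathfrak{h}_{\mathbb{C}}$: you route this through the root-theoretic structure of parabolics (irreducibility of $\mathfrak{n}$ forces $\mathfrak{p}$ maximal, the associated $\mathbb{Z}$--grading collapses to $\mathfrak{g}_{-1}\oplus\mathfrak{g}_{0}\oplus\mathfrak{g}_{1}$ once $\mathfrak{n}$ is abelian, and then $[\mathfrak{g}_{1},\mathfrak{g}_{-1}]\subset\mathfrak{g}_{0}$ is automatic), whereas the paper stays elementary: since $\mathfrak{h}=\mathfrak{m}$ is the centralizer of its center $\mathfrak{c}_{\mathfrak{h}}\neq 0$, irreducibility forces $(\mathfrak{c}_{\mathfrak{h}})_{\mathbb{C}}$ to act on $\mathfrak{n}$ by a single purely imaginary weight $\alpha$ and on $\bar{\mathfrak{n}}$ by $-\alpha$, so $[\bar{\mathfrak{n}},\mathfrak{n}]$ has weight $0$ and lands in $C_{\mathfrak{g}_{\mathbb{C}}}(\mathfrak{c}_{\mathfrak{h}})=\mathfrak{h}_{\mathbb{C}}$. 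Your version buys a cleaner conceptual picture (the three-term grading of Hermitian symmetric type) at the cost of invoking the classification of parabolic gradings; the paper's single-weight argument is self-contained and consistent with its stated aim of elementarity. Two small points to tighten: the assertion ``by effectiveness $\mathfrak{c}_{\mathfrak{g}}\subset\mathfrak{h}$'' is not literally right, since effectiveness only constrains elements of $\mathfrak{h}$ --- you first need irreducibility to conclude $p(\mathfrak{c}_{\mathfrak{g}})=0$ (its being all of $\mathfrak{g}/\mathfrak{h}$ would make $\mathfrak{h}$ act trivially), and only then does effectiveness kill $\mathfrak{c}_{\mathfrak{g}}\subset\mathfrak{h}$; and ``the grading stops at $\pm1$'' should be justified either by $[\mathfrak{g}_{1},\mathfrak{g}_{k}]=\mathfrak{g}_{k+1}$ for $k\geq1$ or, more simply, by noting each $\mathfrak{g}_{k}$ ($k\geq1$) is already a $\mathfrak{g}_{0}$--submodule of the irreducible module $\mathfrak{n}$, so $\mathfrak{n}=\mathfrak{g}_{1}$.
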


\begin{proof}
By the above we may assume that the  complex structure $J$ is $J(%
\mathfrak{\mathfrak{p}},J_{1})$ with $\mathfrak{p}$ a parabolic subalgebra
of $\mathfrak{g}_{\mathbb{C}}$ such that $\mathfrak{m}=\mathfrak{p\cap
\mathfrak{g}}$ contains $\mathfrak{h}$ and $[\mathfrak{m},\mathfrak{m}]=[\mathfrak{h},\mathfrak{h}]$
and $J_{1}$ is a complex structure on $\mathfrak{m}/\mathfrak{h}$.
 Since we are assuming that the action of $\mathfrak{h}$ is irreducible on $\mathfrak{g}/\mathfrak{h}$  we must have $\mathfrak{m}=\mathfrak{h}$ or $\mathfrak{m}=\mathfrak{g}.$
If $\mathfrak{m}=\mathfrak{g}$ then $\mathfrak{h}$ would act by $0$ on
$\mathfrak{g}/\mathfrak{h}$ but $\dim \mathfrak{g}/\mathfrak{h}\geq 2$ which
contradicts the irreducibility. Using $J$ we  consider the action of $%
\mathfrak{h}$ on $\mathfrak{g}/\mathfrak{h}$ as a complex representation it
is equivalent to the representation on $\left( \mathfrak{g}/\mathfrak{h}%
\right) _{+}$.  Thus, Corollary \ref{BasicObs}  implies that (in the notation of the
corollary) that $\mathfrak{\mathfrak{n}}$ is equivalent with $\mathfrak{g}/%
\mathfrak{h}$ as a real representation. Since $\left[ \mathfrak{n},\mathfrak{%
n}\right] \neq \mathfrak{n}$ and $\left[ \mathfrak{n},\mathfrak{n}\right] $
is $\mathfrak{h}$--invariant, $\mathfrak{n}$ is abelian. Also if $\mathfrak{%
\bar{n}}$ is the complex conjugate of $\mathfrak{n}$, since $\mathfrak{h}=\mathfrak{m}$, then
\[
\mathfrak{g}_{\mathbb{C}}=\mathfrak{h}_{\mathbb{C}}\oplus \left(\bar{\mathfrak{n}}%
\oplus \mathfrak{n}\right).
\]%
Now, $[\mathfrak{\bar{n}\oplus
\mathfrak{n}},\mathfrak{\bar{n}\oplus \mathfrak{n}}]=[\mathfrak{\bar{n},%
\mathfrak{n].}}$ We claim that
\begin{equation}\label{eq:helpsym}
\lbrack \mathfrak{\bar{n},\mathfrak{n}}]\subset \mathfrak{h}_{\mathbb{C}}%
\mathfrak{.}
\end{equation}
Indeed, since $\mathfrak{h}=\mathfrak{m}$
\[
\mathfrak{p=\mathfrak{h}}_{\mathbb{C}}\oplus \mathfrak{n.}
\]%
Also, $\mathfrak{h=\mathfrak{c}}_{\mathfrak{h}}\oplus \lbrack \mathfrak{h},%
\mathfrak{h]}$ and since we have shown that, since $\mathfrak{h}=\mathfrak{m}$,  $\mathfrak{h}$ is the centralizer
of $\mathfrak{c}_{\mathfrak{h}}$ in $\mathfrak{g,}$ and our assumptions
imply $\mathfrak{h\neq \mathfrak{g.}}$ Thus $\mathfrak{\mathfrak{c}}_{%
\mathfrak{h}}\neq 0.$ If $\alpha $ is a non-zero weight of $\left( \mathfrak{%
c}_{\mathfrak{h}}\right) _{\mathbb{C}}$ on $\mathfrak{n}$ then the weight
space for $\alpha $ in $\mathfrak{n}$ is $\mathfrak{h}$--invariant and
non-zero. This implies, by irreducibility, that $\left( \mathfrak{c}_{%
\mathfrak{h}}\right) _{\mathbb{C}}$ acts on $\mathfrak{n}$ by multiplication
by $\alpha $. Noting that $\alpha (\mathfrak{c}_{\mathfrak{h}})\subset \mathbf{i}%
\mathbb{R}$ we see that $\left( \mathfrak{c}_{\mathfrak{h}}\right) _{\mathbb{%
C}}$ acts on $\mathfrak{\bar{n}}$ by $-\alpha.$ Thus $[\mathfrak{\bar{n},%
\mathfrak{n}}]$ is contained in the centralizer of $\mathfrak{c}_{\mathfrak{h%
}}$ in $\mathfrak{g}_{\mathbb{C}}$ so $[\mathfrak{\bar{n},\mathfrak{n}}%
]\subset \mathfrak{h}_{\mathbb{C}}$, as asserted.

If we set $V=\mathfrak{(\bar{n}\oplus \mathfrak{n)\cap \mathfrak{g}}}$ then by (\ref{eq:helpsym}), $%
\mathfrak{g}=\mathfrak{h}\oplus V$ with $[V,V]\subset \mathfrak{h}$. Define $%
\theta :\mathfrak{g}\rightarrow \mathfrak{g}$ to be $\operatorname{id}$ on $\mathfrak{h}$
and $-\operatorname{id}$ on $V$. Then $\theta $ defines an involutive automorphism of $%
\mathfrak{g}$ hence $\theta :\mathfrak{c}_{\mathfrak{g}}\rightarrow
\mathfrak{c}_{\mathfrak{g}}$. This implies that%
\[
\mathfrak{c}_{\mathfrak{g}}=\mathfrak{h}\cap \mathfrak{c}_{\mathfrak{g}%
}\oplus V\cap \mathfrak{c}_{\mathfrak{g}}\text{.}
\]%
Now $\mathfrak{h}\cap \mathfrak{c}_{\mathfrak{g}}=0$ since the action is
effective. If $V\cap \mathfrak{c}_{\mathfrak{g}}\neq 0$ then the
irreducibility would imply that $\dim \mathfrak{g}/\mathfrak{h}=1$ which is
impossible since $\mathfrak{g}/\mathfrak{h}$ is even dimensional. Thus $%
\mathfrak{g}$ is semisimple, and by the above  Cartan decomposition we know that $(\mathfrak{g}, \mathfrak{h})$
is a Hermitian symmetric pair of compact type.
\end{proof}

\section{Applications}

In this section we show how results of Wang and Tits follow from
the Lie algebraic results in this paper. We first note that if $G$ is a
compact Lie group and $H$ is a closed subgroup such that $G/H$ has a
$G$--invariant complex structure. In other words, $G/H$ has an almost complex
structure $J$ that is $G$--invariant and the corresponding Nijenhuis tensor is
$0$.  Let $\mathfrak{g}$ and $\mathfrak{h}$ be respectively the Lie algebras of
$G$ and $H$ with $\mathfrak{h}\subset\mathfrak{g}$ and $J$ is a $G$--invariant
almost complex structure on $G/H$ then under the natural identification of
$\mathfrak{g}/\mathfrak{h}$ with $T_{eH}(G/H)$  we can pull back $J_{eH}$ to
$\mathfrak{g}/\mathfrak{h}$ to make it into a complex vector space with
$ad(\mathfrak{h)}$ acting by complex transformations. If $N $ is the Nijenhuis
tensor corresponding to $J$ then the pull back of $N_{eH}$ to $\mathfrak{g}%
/\mathfrak{h}$ is our tensor $N$. Thus our integrability condition implies
that if $G$ and $H$ are connected then $G/H$ has a $G$--invariant complex
structure with $J$ the associated almost complex structure.

The first result is an extension of results of Wang \cite{Wang} and Tits
\cite{Tits}

\begin{theorem}
\label{thm:31} Let $G$ be a compact connected Lie group and $H$ a closed
connected subgroup. Then $G/H$ has a $G$--invariant complex structure if and
only if $H $ is a subgroup of another subgroup $M$ which is the centralizer of
a torus in $G$ such that $[M,M]=[H,H]$.
\end{theorem}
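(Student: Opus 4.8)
The plan is to reduce Theorem \ref{thm:31} to the Lie algebraic statement in Theorem \ref{Wang} together with the structural results of Section 2, translating between the group-level data $(G,H)$ and the algebra-level data $(\mathfrak{g},\mathfrak{h})$. First I would observe that a $G$--invariant almost complex structure $J$ on $G/H$ is the same thing, under the identification $T_{eH}(G/H)\cong\mathfrak{g}/\mathfrak{h}$, as an $\mathfrak{h}$--invariant $J\in\mathrm{End}(\mathfrak{g}/\mathfrak{h})$ with $J^2=-\operatorname{id}$, and that by the discussion opening this section the Nijenhuis tensor of $J$ on $G/H$ pulls back to the tensor $N$ of Definition \ref{NDef}; hence $J$ is a $G$--invariant complex structure precisely when $J$ is an integrable $\mathfrak{h}$--invariant complex structure on $\mathfrak{g}/\mathfrak{h}$. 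Since $G$ is compact, $\mathfrak{g}$ carries a $G$--invariant (hence $\mathfrak{g}$--invariant) inner product, so the hypotheses of Theorem \ref{Wang} are met, and $\dim\mathfrak{g}/\mathfrak{h}=\dim_{\mathbb{R}} T_{eH}(G/H)$ is even because $G/H$ has an almost complex structure.

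For the forward direction, assume $G/H$ has a $G$--invariant complex structure. By Theorem \ref{Wang} there is an abelian subalgebra $\mathfrak{t}\subset\mathfrak{g}$ whose centralizer $\mathfrak{m}=C_{\mathfrak{g}}(\mathfrak{t})$ satisfies $[\mathfrak{m},\mathfrak{m}]=[\mathfrak{h},\mathfrak{h}]$; by Lemma \ref{lemma:HK} (applied with the given $\mathfrak{t}$) we may moreover choose $\mathfrak{t}'$ abelian with $\mathfrak{h}\subset C_{\mathfrak{g}}(\mathfrak{t}')=:\mathfrak{m}$ and $[\mathfrak{m},\mathfrak{m}]=[\mathfrak{h},\mathfrak{h}]$. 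Enlarging $\mathfrak{t}'$ to a maximal abelian subalgebra $\mathfrak{a}$ of $\mathfrak{m}$ does not change the centralizer (since $\mathfrak{a}\subset\mathfrak{m}=C_{\mathfrak{g}}(\mathfrak{t}')$ forces $C_{\mathfrak{g}}(\mathfrak{a})\subset\mathfrak{m}$, while $\mathfrak{a}$ being maximal abelian in $\mathfrak{m}$ and $\mathfrak{m}$ reductive gives the reverse inclusion), so we may assume $\mathfrak{t}'$ is the Lie algebra of a maximal torus $T$ of $G$ meeting $C_G(H)$ appropriately. Then let $M$ be the connected subgroup of $G$ with Lie algebra $\mathfrak{m}=C_{\mathfrak{g}}(\mathfrak{t}')$: because $\mathfrak{m}$ is the centralizer of a torus and $G$ is compact, $M=C_G(T')$ is closed and connected (centralizers of tori in compact connected groups are connected), it contains $H$ (as $\mathfrak{h}\subset\mathfrak{m}$ and $H$ is connected), and $[M,M]=[H,H]$ follows from $[\mathfrak{m},\mathfrak{m}]=[\mathfrak{h},\mathfrak{h}]$ passing to connected subgroups.

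Conversely, suppose $H\subset M=C_G(T)$ with $[M,M]=[H,H]$. Then $\mathfrak{m}=C_{\mathfrak{g}}(\mathfrak{t})$ where $\mathfrak{t}=\mathrm{Lie}(T)$ is abelian, $\mathfrak{h}\subset\mathfrak{m}$, and $[\mathfrak{m},\mathfrak{m}]=[\mathfrak{h},\mathfrak{h}]$; since $\dim\mathfrak{g}/\mathfrak{h}$ is even, Theorem \ref{Wang} (sufficiency) — equivalently Proposition \ref{prop:21} applied to a parabolic $\mathfrak{p}$ with $\mathfrak{p}\cap\mathfrak{g}=\mathfrak{m}$ produced by Lemma \ref{lmm:21}, and a complex structure $J_1$ on the even-dimensional abelian algebra $\mathfrak{m}/\mathfrak{h}$ — yields an integrable $\mathfrak{h}$--invariant complex structure on $\mathfrak{g}/\mathfrak{h}$, which, since $G$ and $H$ are connected, integrates to a $G$--invariant complex structure on $G/H$. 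The main obstacle, and the step requiring the most care, is the translation between subalgebras and subgroups: one must argue that the abstractly produced $\mathfrak{m}$ is the Lie algebra of a genuine closed subgroup of $G$ of the form "centralizer of a torus" (rather than merely some immersed subgroup), which is exactly where compactness of $G$ and the conjugacy/connectedness theorems for centralizers of tori enter; the Lie-algebra content itself is entirely supplied by Theorem \ref{Wang} and Lemma \ref{lemma:HK}.
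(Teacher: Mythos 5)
Your overall route is exactly the paper's: translate a $G$--invariant complex structure on $G/H$ into an integrable $\mathfrak{h}$--invariant complex structure on $\mathfrak{g}/\mathfrak{h}$ via the identification $T_{eH}(G/H)\cong\mathfrak{g}/\mathfrak{h}$ and the matching of Nijenhuis tensors, and then quote Theorem \ref{Wang} (with Lemma \ref{lemma:HK} and Lemma \ref{lmm:21} supplying the refinements). The paper's proof is literally this one-line reduction, so the only genuinely new content in your write-up is the passage from the abstract subalgebra $\mathfrak{m}=C_{\mathfrak{g}}(\mathfrak{t}')$ back to a closed subgroup of $G$ that is the centralizer of an honest torus --- and that is precisely where your argument breaks.

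You claim that enlarging $\mathfrak{t}'$ to a maximal abelian subalgebra $\mathfrak{a}$ of $\mathfrak{m}$ leaves the centralizer unchanged, asserting that ``$\mathfrak{a}$ being maximal abelian in $\mathfrak{m}$ and $\mathfrak{m}$ reductive gives the reverse inclusion'' $\mathfrak{m}\subset C_{\mathfrak{g}}(\mathfrak{a})$. This is false: a maximal abelian subalgebra of $\mathfrak{m}$ is a Cartan subalgebra of $\mathfrak{m}$, and $\mathfrak{m}$ centralizes it only if $\mathfrak{m}$ is abelian. For instance, with $\mathfrak{g}=\mathfrak{su}(3)$ and $\mathfrak{t}'$ spanned by $\mathrm{diag}(\mathbf{i},\mathbf{i},-2\mathbf{i})$ one gets $\mathfrak{m}\cong\mathfrak{u}(2)$, while the centralizer of a maximal abelian subalgebra of $\mathfrak{m}$ is just the diagonal Cartan subalgebra of $\mathfrak{su}(3)$. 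Your ensuing conclusion that $\mathfrak{t}'$ may be taken to be the Lie algebra of a maximal torus of $G$ would force $\mathfrak{m}$ to be a Cartan subalgebra and hence $[\mathfrak{h},\mathfrak{h}]=[\mathfrak{m},\mathfrak{m}]=0$, contradicting the general case. The correct move goes the other way: let $T'$ be the \emph{closure} in $G$ of the connected subgroup $\exp(\mathfrak{t}')$; this is a torus, and since each $Y\in\mathfrak{m}$ has closed centralizer $C_G(Y)\supset\exp(\mathfrak{t}')$, hence $C_G(Y)\supset T'$, one gets $C_{\mathfrak{g}}(\mathrm{Lie}(T'))=C_{\mathfrak{g}}(\mathfrak{t}')=\mathfrak{m}$. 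Then $M=C_G(T')$ is closed and connected with Lie algebra $\mathfrak{m}$, and the rest of your argument (that $H\subset M$ and $[M,M]=[H,H]$ by connectedness) goes through. One further small point: in the converse direction you invoke the evenness of $\dim\mathfrak{g}/\mathfrak{h}$, which does not follow from the stated hypotheses there (e.g.\ $H=SU(2)\subset G=SU(2)\times S^1$ with $M=G$); this evenness is an implicit hypothesis of the theorem inherited from Theorem \ref{Wang}, and it would be cleaner to say so explicitly.
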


\begin{proof}
In light of the remarks above this result is a direct consequence of Theorem
\ref{Wang}.
\end{proof}

The next result is a geometric version of our classification theorem.

\begin{theorem}
\label{thm:32} Let $G$ be a compact connected Lie group, $H$ a closed,
connected subgroup and $M$ the centralizer of a torus in $G$ such that
$[M,M]=[H,H]$.  In light of Lemma \ref{lemma:HK} we may also assume that $M \supset H$.
 Then the $G$--invariant complex structures on $G/M$ are
parametrized by the finite number of parabolic subalgebras $\mathfrak{p}%
\subset\mathfrak{g}_{\mathbb{C}}$ such that $\mathfrak{p}\cap
\mathfrak{g=\mathfrak{m}}$. The complex structures on $G/H$ are parametrized
by the pairs of $(\mathfrak{p},J_{1})$ of a parabolic subalgebra
$\mathfrak{p}\subset\mathfrak{g}_{\mathbb{C}}$ such that $\mathfrak{p}%
\cap\mathfrak{g=\mathfrak{m}}$ and an $M$--invariant complex structure,
$J_{1}$, on the torus $M/H$.
\end{theorem}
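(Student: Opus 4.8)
The plan is to translate the purely Lie-algebraic classification (Theorem \ref{thm:22}, i.e. Theorem \ref{Param2}) into the geometric setting, using the dictionary established at the start of this section between $G$-invariant almost complex structures on $G/H$ and $\mathfrak{h}$-invariant almost complex structures on $\mathfrak{g}/\mathfrak{h}$, together with the Koszul integrability criterion embodied in Lemma \ref{lmm:12}. First I would set up notation: let $\mathfrak{g} = \mathrm{Lie}(G)$, $\mathfrak{h} = \mathrm{Lie}(H)$, $\mathfrak{m} = \mathrm{Lie}(M)$; since $G$ is compact, $\mathfrak{g}$ carries an $\mathrm{Ad}(G)$-invariant inner product, so the hypotheses of Theorem \ref{Wang} are in force once we note that $\mathfrak{m}$ is the centralizer in $\mathfrak{g}$ of the Lie algebra of the central torus of $M$ and $[\mathfrak{m},\mathfrak{m}] = [\mathfrak{h},\mathfrak{h}]$, and that (invoking Lemma \ref{lemma:HK}, exactly as flagged in the statement) we may assume $\mathfrak{h} \subset \mathfrak{m}$. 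Because $G$ and $H$ are connected, a $G$-invariant almost complex structure on $G/H$ is the \emph{same datum} as an $\mathfrak{h}$-invariant $J$ on $\mathfrak{g}/\mathfrak{h}$, and its Nijenhuis tensor corresponds to the tensor $N$; hence $G$-invariant complex structures on $G/H$ correspond bijectively to integrable $\mathfrak{h}$-invariant complex structures on $\mathfrak{g}/\mathfrak{h}$.

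With that dictionary in place, the $G/M$ statement is the special case $H = M$ of the $G/H$ statement: here $\mathfrak{m}/\mathfrak{m} = 0$, so the only complex structure $J_1$ on the (trivial) abelian algebra is the empty datum, and Theorem \ref{thm:22} reduces the integrable $\mathfrak{m}$-invariant complex structures on $\mathfrak{g}/\mathfrak{m}$ to the set $P(\mathfrak{m})$, namely the parabolic subalgebras $\mathfrak{p} \subset \mathfrak{g}_{\mathbb{C}}$ with $\mathfrak{p} \cap \mathfrak{g} = \mathfrak{m}$ (the conditions $\mathfrak{m}_{\mathfrak{p}} \supset \mathfrak{m}$ and $[\mathfrak{m}_{\mathfrak{p}},\mathfrak{m}_{\mathfrak{p}}] = [\mathfrak{m},\mathfrak{m}]$ collapse, when combined with $\mathfrak{m}_{\mathfrak{p}} = \mathfrak{p}\cap\mathfrak{g}$ being reductive with the prescribed commutator and containing $\mathfrak{m} = C_{\mathfrak{g}}([\mathfrak{m},\mathfrak{m}])$-type data, to exactly $\mathfrak{m}_{\mathfrak{p}} = \mathfrak{m}$). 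This is a finite set since a $\mathfrak{g}_{\mathbb{C}}$ has finitely many parabolic subalgebras containing a fixed Borel, up to the relevant bookkeeping; one cites the standard fact. The $G/H$ statement is then Theorem \ref{thm:22} verbatim: integrable $\mathfrak{h}$-invariant complex structures on $\mathfrak{g}/\mathfrak{h}$ are the $J(\mathfrak{p},J_1)$ for $\mathfrak{p} \in P(\mathfrak{h})$ and $J_1$ a complex structure on $\mathfrak{m}_{\mathfrak{p}}/\mathfrak{h}$; and one observes that $\mathfrak{m}_{\mathfrak{p}}/\mathfrak{h}$ exponentiates to the torus $M_{\mathfrak{p}}/H$ (abelian because $[\mathfrak{m}_{\mathfrak{p}},\mathfrak{m}_{\mathfrak{p}}] = [\mathfrak{h},\mathfrak{h}] \subset \mathfrak{h}$), and that an $\mathfrak{h}$-invariant $J_1$ on $\mathfrak{m}_{\mathfrak{p}}/\mathfrak{h}$ is precisely a $G$-invariant, equivalently $M_{\mathfrak{p}}$-invariant, complex structure on the homogeneous torus $M_{\mathfrak{p}}/H$.

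The one point that needs a genuine (if short) argument — and which I expect to be the main obstacle — is the bijectivity: that \emph{distinct} pairs $(\mathfrak{p}, J_1)$ give \emph{distinct} complex structures on $G/H$, and that every $\mathfrak{p}$ occurring satisfies $\mathfrak{p}\cap\mathfrak{g} = \mathfrak{m}$ (not merely $[\mathfrak{p}\cap\mathfrak{g},\mathfrak{p}\cap\mathfrak{g}] = [\mathfrak{h},\mathfrak{h}]$). For the latter I would invoke Theorem \ref{thm:The-parabolic}: the canonical $\mathfrak{m}$ attached to a given $J$ is intrinsic, and the construction of $J(\mathfrak{p},J_1)$ in Proposition \ref{prop:21} has $\mathfrak{m}_{\mathfrak{p}}$ as its canonically associated subalgebra, so by the uniqueness of $\mathfrak{m}$ all these parabolics have $\mathfrak{p}\cap\mathfrak{g}$ equal to a fixed conjugate of $\mathfrak{m}$; after the Lemma \ref{lemma:HK} normalization we may take it to be $\mathfrak{m}$ itself. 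For injectivity of $(\mathfrak{p},J_1) \mapsto J(\mathfrak{p},J_1)$ I would recover $\mathfrak{p}$ from $J$ as the normalizer of $p^{-1}((\mathfrak{g}/\mathfrak{h})_+)$ (Theorem \ref{thm:The-parabolic}) and recover $J_1$ as the restriction $J|_{\mathfrak{m}/\mathfrak{h}}$ (Corollary \ref{BasicObs}, which also shows $J$ preserves $\mathfrak{m}/\mathfrak{h}$). Finally I would note that the Lemma \ref{lemma:HK} reduction is harmless for the parametrization because all choices of $\mathfrak{m}$ (centralizers of tori, with the prescribed commutator) are $G$-conjugate, so conjugating does not change the set of biholomorphism classes — but since the statement asks only to parametrize the structures with $\mathfrak{p}\cap\mathfrak{g} = \mathfrak{m}$ for the \emph{chosen} $M \supset H$, no further argument is needed. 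Assembling these pieces gives both assertions.
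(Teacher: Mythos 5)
Your proof is correct and follows essentially the same route as the paper, which disposes of Theorem \ref{thm:32} in one line by invoking the dictionary between $G$--invariant structures on $G/H$ and $\mathfrak{h}$--invariant integrable $J$ on $\mathfrak{g}/\mathfrak{h}$ and then citing Theorem \ref{Param2}. Your additional care about injectivity of $(\mathfrak{p},J_1)\mapsto J(\mathfrak{p},J_1)$ (recovering $\mathfrak{p}$ as the normalizer of $p^{-1}((\mathfrak{g}/\mathfrak{h})_+)$ via Theorem \ref{thm:The-parabolic} and $J_1$ via Corollary \ref{BasicObs}) and about why $\mathfrak{m}_{\mathfrak{p}}=\mathfrak{m}$ in the $G/M$ case is a welcome elaboration of steps the paper leaves implicit, not a different method.
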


\begin{proof}
By the remarks above the theorem is a direct consequence of Theorem
\ref{Param2}.
\end{proof}

We will now give proofs of the theorems stated at the end of the introduction.

\begin{proof}{\it of Theorem \ref{gen-Wang}}
We use the notation and assumptions of Theorem \ref{gen-Wang}. Let $\mathfrak{h}$ and $%
\mathfrak{g}$ be respectively the Lie algebras of $H$ and $G$. Let $J$ be
the integrable complex structure on $\mathfrak{g}/\mathfrak{h}$ and let $%
\mathfrak{m}\supset \mathfrak{h}$ be as in Theorem \ref{thm:The-parabolic} for $J$. Then
Corollary \ref{coro:12} implies that the connected subgroup corresponding to $\mathfrak{%
m}$ in $G$, $M$ is given as in the statement. Since $M$ is the centralizer
of a torus in $G$ and $G$ is connected, $G/M$ is a flag variety. The
fibration is now obvious.
\end{proof}
Next we give the proof of Theorem \ref{description}.

\begin{proof}{\it of Theorem \ref{description}}
Let $M$ be as in Theorem \ref{description} and let $T$ be the identity component of the
center of $M$. Then $M=T[M,M]$ so $M=TH.$ Let $T_{1}$ be a maximal torus in $%
C_{G}(H)$ containing $T$. Since $M$ is the centralizer of its center this
implies that $T_{1}\subset M$. Since $M=TH$ we see that $T_{1}$ is in the
center of $M$ thus $T_{1}=T$. Finally, Theorem \ref{thm:22} implies the description of
the $G$--invariant complex structures on $G/H$.
\end{proof}

Next we give a proof of Theorem \ref{Herm} in the introduction.

\begin{proof}{\it of Theorem \ref{Herm}}
Let $\left\langle \cdot, \cdot\right\rangle $ be a homogeneous, positive
definite Hermitian structure on $M$ and let $G$ be the identity component of
the isometry group of $\left\langle \cdot, \cdot\right\rangle $. Since $M$ is
connected if $(M,\left\langle \cdot, \cdot\right\rangle )$ is homogeneous then
$G$ acts transitively. Since $M$ is compact, $G$ is compact. Conversely if $G$
is a compact Lie group and $H\subset G$ is a closed subgroup and $J$ is a $G$
invariant almost complex structure on $G$ then since $H$ is compact, it is
obvious that $G/H$ has a $G$--invariant, positive definite  Hermitian structure.
\end{proof}

Finally we note that Theorem \ref{Irreducible} follows from Theorem \ref{LieAlgIrr} and Theorem \ref{Herm}.

\acknowledgements{\rm We  thank McKenzie Wang for helpful comments, Fabio Podest\'a for sending in an alternate argument for Theorem \ref{LieAlgIrr} and his comments. We also thank the referees for their time and comments, Yuguang Shi for the invitation of this contribution.}


\begin{thebibliography}{99}

\bibitem{Agri} Agricola, I.: Connections on naturally reductive spaces, their Dirac operator and homogeneous models in string theory. \emph{Comm. Math. Phys.,} \textbf{232} (2003), no.3,  535--563.

\bibitem{AFF} Agricola, I., Ferreira, A., Friedrich, T.: The classification of naturally reductive homogeneous spaces in dimensions, $n\le 6$. \emph{ Diff. Geom. Appl.}, \textbf{39}(2015), 59--92.
                                                                                          %
\bibitem{BPT} Barbaro, G., Pediconi, F.,   Tardini, N.: Pluriclosed
manifolds with parallel Bismut trosion. \emph{ArXiv preprint}: arXiv:2406.07039.

\bibitem{BH} Borel, A., Hirzebruch, F.: Characteristic classes and homogeneous spaces. \emph{Amer. J. Math.,} \textbf{80}(1958) 458--538; \textbf{81}(1959), 315--382; \textbf{82}(1960), 491--504.

\bibitem{HK}Hano, J., Kobayashi, S.:  A fibering of a class of
homogeneous complex manifolds. \emph{ Trans. Amer. Math. Soc.}, \textbf{94} (1960), 233--243.

\bibitem{Helgason} Helgason, S.: Differential Geometry, Lie groups, and Symmetric Spaces, Academic Press, 1978. xv+634 pp.

\bibitem{Kos} Koszul, J.-L.: Sur la forme hermitienne canonique des
espaces homog\`enes complexes. \emph{Canad. J. Math.}, \textbf{7}(1955), 562--576.

\bibitem{Monty} Montgometry, D.: Simply-connected homogeneous
spaces. \emph{ Proc. Amer. Math. Soc.}, \textbf{1} (1950), 467--469.

\bibitem{Mura} Murakami, S.:  Introduction to Homogeneous Manifolds (in chinese),
Shanghai Science and Technology Publishing House, Shanghai, 1983, iv+188 pp.

\bibitem{NZ1}  Ni, L.,  Zheng, F.: On Hermitian manifolds whose Chern connection is Ambrose-Singer. \emph{ Trans. Amer. Math. Soc.}, \textbf{376} (2023), no.9, 6681--6707.

\bibitem{NZ2}  Ni, L., Zheng, F.: A classification of locally Chern homogeneous Hermitian manifolds. 	\emph{ArXiv preprint}: arXiv:2301.00579.

    \bibitem{Nomizu}  Nomizu, K.:  Invariant affine connections on homogeneous spaces. \emph{ Amer. J. Math.}, \textbf{76} (1954), 33--65.

\bibitem{Pittie} Pittie, H.: The Dolbeault-cohomology ring of a
compact, even-dimensional Lie group. \emph{ Proc. Indian Acad. Sci. Math. Sci.},  \textbf{98}%
(1988), 117--152.

\bibitem{PZ}  Podest\`a, F.,   Zheng, F.:   A note on compact homogeneous manifolds with Bismut parallel torsion. \emph{Science China Math.}, to appear.


\bibitem{Samelson}Samelson, H.: A class of complex-analytic
manifolds. \emph{ Portugal. Math.}, \textbf{12}(1953), 129--132.

\bibitem{Tits} Tits, J.:  Espaces homog\`{e}nes complexes compacts. \emph{
Comment. Math. Helv.}, \textbf{37}(1962/63), 111--120.


\bibitem{Wallach} Wallach, N.:  Harmonic Analysis on Homogeneous Spaces, 2nd Edition, Dover, 2018.

\bibitem{Wallach2} Wallach, N.:  Real Reductive Groups I, Academic Press, Boston, 1988.

\bibitem{Wang} Wang, H.-C.:  Closed manifolds with homogeneous
complex structure. \emph{ Amer. J. Math.}, \textbf{76}(1954), 1--32.

\bibitem{WZ} Wang M.,  Ziller, W.: On isotropy irreducible Riemannian manifolds. \emph{
Acta Math.}, \textbf{166} (1991), 223--261.

\bibitem{Wolf} Wolf, J.: The geometry and structure of isotropy irreducible homogeneous
spaces.  \emph{Acta Math.}, \textbf{120} (1968), 59--148; correction, \emph{Acta Math.}, \textbf{152} (1984),
141--142.

\bibitem{ZZ} Zhao, Q.,  Zheng, F.: Bismut K\"ahler-like manifolds of dimension 4 and 5.  \emph{ArXiv preprint}: arXiv:2303.09267.

\bibitem{Ziller} Ziller, W.: The Jacobi equation on naturally reductive compact Riemannian homogeneous spaces. \emph{ Comment. Math. Helv.}, \textbf{52}(1977), 573--590.
\end{thebibliography}
\end{document}